\documentclass[12pt]{amsart}
\usepackage{amssymb}
\usepackage[all]{xy}
\usepackage{tikz}

\usepackage{amscd}
\usepackage{amsmath}
\input xy
\xyoption{all}

\usepackage{amsthm}
\usetikzlibrary{arrows}
\usepackage{blindtext}

\usepackage{amsmath}
\usepackage{stmaryrd}
\usepackage{quiver}
\usepackage{amssymb}
\usepackage{mathrsfs}
\usepackage[all]{xy}
\usepackage{tikz}
\usepackage[alphabetic,nobysame,abbrev]{amsrefs}
\usepackage{enumerate}
\RequirePackage{fix-cm}
\usepackage{bm}
\usepackage{mdframed}
\usepackage{cancel}
\usepackage{makecell}
\usepackage{diagbox}

\usepackage{amscd}
\usepackage{amsmath}
\usepackage{bm}
\usepackage{newtxtext} 
\numberwithin{equation}{section}
\usepackage{romanbar}
\usepackage[toc,page]{appendix}
\usepackage{hyperref}
\input xy
\xyoption{all}
\usepackage{color}
\RequirePackage{fix-cm}
\usepackage{bm}
\usepackage{mdframed}
\usepackage{cancel}
\usepackage{makecell}
\usepackage{diagbox}

\DeclareMathAlphabet{\mathpzc}{OT1}{pzc}{m}{it}

\newtheorem{lemma}{Lemma}[section]

\newtheorem{theorem}[lemma]{Theorem}
\newtheorem{proposition}[lemma]{Proposition}

\theoremstyle{definition}
\newtheorem{remark}[lemma]{Remark}
\newtheorem{notation}[lemma]{Notation}
\newtheorem{definition}[lemma]{Definition}

\newtheorem{condition}[lemma]{Condition}

\DeclareMathOperator{\Mod}{Mod}
\DeclareMathOperator{\modd}{mod}

\DeclareMathOperator{\Hom}{Hom}

\DeclareMathOperator{\id}{id}

\DeclareMathOperator{\Ext}{Ext}
\DeclareMathOperator{\Lim}{Lim}

\DeclareMathOperator{\K}{K}
\DeclareMathOperator{\Ker}{Ker}
\DeclareMathOperator{\Coker}{Coker}
\DeclareMathOperator{\Cogen}{Cogen}
\DeclareMathOperator{\cogen}{cogen}

\DeclareMathOperator{\Imm}{Im}
\DeclareMathOperator{\Inj}{Inj}
\DeclareMathOperator{\SII}{SII}

\DeclareMathOperator{\Ann}{Ann}
\DeclareMathOperator{\D}{D}

\DeclareMathOperator{\ts}{t}
\DeclareMathOperator{\fs}{f}

\DeclareMathOperator{\HH}{H}

\DeclareMathOperator{\Prod}{Prod}

\DeclareMathOperator{\Rad}{Rad}

\newtheorem{question}[lemma]{Question}
\newtheorem*{theorem 0*}{Theorem}
\newtheorem*{theorem a*}{Theorem A}
\newtheorem*{theorem b*}{Theorem B}

\newcounter{diagram}
\numberwithin{diagram}{section}

\begin{document}
	
	\title{On simples in a cosilting heart and mutation of cosilting pairs}
	\author{Ramin Ebrahimi} 
\address{School of Mathematical Sciences, Zhejiang Normal University, Jinhua 321004, China}
\email{rebrahimi@zjnu.edu.cn / ramin.ebrahimi1369@gmail.com}
\author{Rasool Hafezi}
\address{School of Mathematics and Statistics, Nanjing University of Information Science and Technology, Nanjing, Jiangsu 210044, P.R. China}
\email{hafezi@nuist.edu.cn}
\author{Jiaqun Wei}
\address{School of Mathematical Science, Zhejiang Normal University, Jinhua 321004, China}
\email{weijiaqun5479@zjnu.edu.cn}

	\subjclass[2020]{{16G10}, {18E40}, {18G80}, {16E35}}
	
	\keywords{torsion pair, t-structure, cosilting object, mutation}

	\begin{abstract}
Let $A$ be a finite dimensional algebra. The lattice of torsion pairs in $\modd\text{(}A\text{)}$ is controlled by cosilting pairs, infinitely generated analogues of support $\tau^-$-tilting pairs. Then, edges in the Hasse quiver (i.e. minimal inclusions of torsion-free classes) correspond to irreducible mutation of cosilting pairs. An important difference with classical $\tau$-tilting theory is that not all indecomposable summands of a cosilting pair are mutable.
So, it is very important to identify mutable indecomposable summands in a given cosilting pair.

It is well-known that mutable summands correspond to injective envelopes of finitely presented simples in the HRS-tilted heart. Based on this correspondence, we first present a method for obtaining all simples in the HRS-tilted heart, and then give some necessary and sufficient conditions for an indecomposable summand of a given cosilting pair to be left mutable or right mutable.
	\end{abstract}
	
	\maketitle


\section{Introduction}
Let $A$ be a finite dimensional algebra and $\Mod(A)$ (resp. $\modd(A)$) denotes the category of all (resp. finitely presented) left $A$-modules.
The collection of all torsion pairs in $\modd (A)$, ordered by inclusion of torsion classes forms a complete lattice denoted by $\textbf{tors}(A)$.
In the past decade many authors studied the lattice $\textbf{tors}(A)$ from different perspectives. A major step was done by Adachi, Iyama and Reiten, where they showed that functorially finite torsion pairs are parametrized by $2$-term silting complexes of finitely generated projective $A$-modules, or support $\tau$-tilting pairs. They also showed that minimal inclusions of functorially finite torsion classes correspond to irreducible mutation of associated $2$-term silting complexes, or irreducible mutation of associated support $\tau$-tilting pair. Every support $\tau$-tilting pair has exactly $n$ indecomposable summands, where $n$ is the number of isoclasses of non-isomorphic simple $A$-modules, and we can mutate at each of these $n$ summands.

However, $\tau$-tilting theory just deals with functorially finite torsion pairs.
If we want to parametrize all torsion pairs, we need to work with infinitely generated counterpart of support $\tau$-tilting modules, called silting modules. Then, torsion classes in $\modd (A)$ are in bijection with silting modules, and dually torsion-free classes in $\modd (A)$ are in bijection with cosilting modules. These are not finitely generated in general, so we can't expect for a decomposition into indecomposables. If we work with cosilting modules, because cosilting modules are pure-injective, as a consequence, there is a collection of indecomposables, providing a pseudo-decomposition into indecomposable summands! see Proposition \ref{2.12}.
Indeed, for a cosilting module $C$, if we consider all indecomposable modules that arise as indecomposable summands of products of copies of $C$, and denote this set by $\mathcal{Z}_C$, then $\prod_{X\in\mathcal{Z}_C} X$ is a cosilting module equivalent to $C$.
Let also $\mathcal{I}_C$ be the set of isoclasses of all indecomposable injective $A$-modules with no morphism from $C$ to them. Then the pair $(\mathcal{Z}_C, \mathcal{I}_C)$ is called a cosilting pair, which is an infinitely generated analogous of support $\tau^-$-tilting pairs.
Similar to $\tau$-tilting theory, cosilting pairs (and cosilting modules) are in a natural bijection with $2$-term cosilting complexes in $\K^b(\Inj A)$, by taking minimal injective copresentation.
It was proved in \cite{ALSV} that minimal inclusions of torsion-free classes are corresponds to an operation of mutation at an indecomposable $X$ in the associated cosilting pair (or equivalently, mutation at an indecomposable summand of the corresponding $2$-term cosilting complex). However, in contrary with $\tau$-tilting theory, not all indecomposables in a cosilting pair are mutable. So, the following question is very important.

\begin{question}\label{1.1}
Let $(\mathcal{Z},\mathcal{I})$ be a cosilting pair. In which indecomposables is mutation possible?
\end{question}

In this paper we provide an answer to this question.
Let us first recall that there is another way to detect minimal inclusion of torsion pairs \cite{BKZ,DIRRT}. Let $(\ts,\fs)$ and $(\rm u,v)$ be two torsion pairs in $\modd(A)$. There is a minimal inclusion $\ts\subseteq \rm u$ if and only if there is a torsion-free almost torsion module $B$ for the torsion pair $(\ts,\fs)$ such that $\rm u$ is the smallest torsion class containing $\ts$ and $B$. At the same time, $B$ is a torsion almost torsion-free for $(\rm u,v)$ and $\fs$ is the smallest torsion free class containing $\rm v$ and $B$. This $B$ is a brick and is called the brick label of the minimal inclusion $\ts\subseteq\rm u$, and also the minimal inclusion $\rm v\subseteq \fs$. So, if $(\mathcal{Z},\mathcal{I})$ is the cosilting pair associated to $(\ts,\fs)$, mutable summands are in correspondence with the union of torsion-free almost torsion modules and torsion almost torsion-free modules for the torsion pair $(\ts,\fs)$.

The connection between these two ways of characterizing minimal inclusions of torsion(-free) classes can be revealed through the lens of HRS-tilting, see \cite{ALSV,L}.
Let $t=(\ts,\fs)$ be a torsion pair in $\modd (A)$, and $(\mathcal{T},\mathcal{F})$ be the torsion pair in $\Mod (A)$ obtained by taking direct limit closure of $\ts$ and $\fs$, and $\sigma_t$ be the associated $2$-term cosilting complex. Then, the HRS-tilted t-structure $(\mathcal{X}_t,\mathcal{Y}_t)$ in the unbounded derived category of $\Mod(A)$, $\D(A)$ has interesting properties. The heart of this t-structure, denoted by $\mathcal{H}_t$, is a locally coherent Grothendieck category, and the associated cohomological functor $\HH^0_t$ induces an equivalence between $\Prod(\sigma_t)$ and $\Inj \mathcal{H}_t$, the subcategory of injecive objects in $\mathcal{H}_t$.
It turns out that the operation of mutation at an indecomposable summand $X$ of $\sigma_t$ is possible if and only if $\HH^0_t(X)$ is injective envelope of a finitely presented simple object in $\mathcal{H}_t$. This simple is the brick label mentioned in the previous paragraph. Motivated by this result, first we build a connection between indecomposable injective objects and simple objects in Grothendieck categories.
\begin{theorem}\label{1.2}
Let $\mathcal{H}$ be a Grothendieck category such that its set of isoclasses of indecomposable injective objects, denoted by $\textbf{Sp}\mathcal{H}$, forms a cogenerating subcategory. For any $I\in\textbf{Sp}\mathcal{H}$ consider the canonical morphism
		\[\phi_I:I\longrightarrow \prod_{J\in\textbf{Sp}\mathcal{H}}J^{\Rad(I,J)},\]
where $\Rad(I,J)=\{f\in \Hom_{\mathcal{H}}(I,J)|1_I-gf \space\text{ is invertible for any }\space  g\in\Hom_\mathcal{H}(J,I)\}$. Set $S_I:=\Ker(\phi_I)$ and $\mathcal{S}:=\{S_I\mid I\in\textbf{Sp}\mathcal{H}\; \text{and}\; S_I\neq 0\}$.
Then $\mathcal{S}$ is a complete set of non-isomorphic simple objects in $\mathcal{H}$. And if $S_I\neq 0$, $I$ is the injective envelope of the simple $S_I$.
\end{theorem}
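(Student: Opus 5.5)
The plan is to identify $S_I$ concretely as the intersection of the kernels of all non-isomorphisms from $I$ to indecomposable injectives. Then I use the uniformity of $I$ together with the cogenerating hypothesis. Throughout I use standard facts about a Grothendieck category $\mathcal{H}$. Injective envelopes exist and are unique up to isomorphism. An indecomposable injective $I$ is uniform, i.e.\ any two nonzero subobjects of $I$ intersect nontrivially (otherwise $E(U)\oplus E(V)$ would be a summand of $I$). Consequently $\operatorname{End}(I)$ is local, and a morphism from $I$ to an injective object is a split monomorphism as soon as it is a monomorphism.

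\textbf{Step 1: describe $\Rad(I,J)$.} Since $\operatorname{End}(I)$ is local, $1_I-gf$ fails to be invertible exactly when $gf$ is invertible, and then $I$ is a summand of $J$. So if $J\not\cong I$, every morphism lies in the radical and $\Rad(I,J)=\Hom(I,J)$. If $J\cong I$, then $\Rad(I,I)$ is the Jacobson radical of the local ring $\operatorname{End}(I)$, namely the non-isomorphisms. By the remark above, these are exactly the endomorphisms that are not monomorphisms. In all cases, $f\in\Rad(I,J)$ if and only if $f$ is not a (split) monomorphism. Hence $S_I=\bigcap_f \Ker f$, where $f$ runs over the non-monomorphisms $I\to J$ with $J\in\textbf{Sp}\mathcal{H}$.

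\textbf{Step 2: the key separation argument.} Let $0\neq U\subseteq S_I$ be a subobject and suppose $U\subsetneq S_I$. The hypothesis that $\textbf{Sp}\mathcal{H}$ is cogenerating gives a nonzero morphism $S_I/U\to J$ for some $J\in\textbf{Sp}\mathcal{H}$. By injectivity of $J$ it extends to $I/U\to J$. The composite $f\colon I\to I/U\to J$ kills $U\neq 0$, so $f$ is not a monomorphism and lies in $\Rad(I,J)$ by Step 1. Therefore $f$ kills $S_I$, which contradicts the choice of the morphism on $S_I/U$. Hence $S_I$ has no proper nonzero subobjects: whenever $S_I\neq 0$ it is simple. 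Since $I$ is uniform, $S_I\subseteq I$ is essential, so $I=E(S_I)$.

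\textbf{Step 3: every simple occurs, and there is no repetition.} Let $S$ be simple and $I=E(S)$. Then $I$ is indecomposable, since an essential extension of a simple is uniform, so $I\in\textbf{Sp}\mathcal{H}$. For any $f\in\Rad(I,J)$, if $f|_S\neq 0$ then $f|_S$ is a monomorphism because $S$ is simple, so $\Ker f\cap S=0$. Essentiality of $S\subseteq I$ then forces $\Ker f=0$, contradicting Step 1. Hence $S\subseteq S_I$, so $S_I\neq0$, and by Step 2 $S_I$ is simple, which gives $S=S_I$. Finally, if $S_I\cong S_{I'}$, uniqueness of injective envelopes gives $I\cong I'$, so the members of $\mathcal{S}$ are pairwise non-isomorphic.

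I expect the only delicate point to be Step 1: the precise identification of $\Rad(I,J)$ with the non-monomorphisms, which rests on locality of $\operatorname{End}(I)$ and on monomorphisms out of an injective splitting. Once that is in place, the separation argument of Step 2 uses the cogenerating hypothesis in a single line, and the rest is formal.
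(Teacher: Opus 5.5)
Your proof is correct and follows essentially the paper's argument: a morphism $I\to I/U\to J$ that kills a nonzero $U$ is a non-isomorphism, hence radical, hence kills $S_I$, which forces simplicity; and for a simple $S$, essentiality of $S$ in $E(S)$ gives $S\subseteq S_I$. The differences are small economies: you use one nonzero map out of $S_I/U$ instead of embedding $E(I/M)$ into a product, and you get $S=S_I$ from the simplicity of $S_I$ rather than rerunning the argument with $I/S$; in Step 1, ``exactly when'' should read ``only if'' (invertibility of $gf$ does not make $1_I-gf$ non-invertible), but that is the only direction you use.
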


We are interested in the Grothendieck categories that arise as the heart of HRS-tilted t-structure associated to a torsion pair $t=(\ts,\fs)$ in $\modd(A)$. In this case $\HH^0_t(\sigma_t)$ is an injecive cogenerator for $\mathcal{H}_t$.
It is not easy to apply Theorem \ref{1.2} to this $\mathcal{H}_t$, because injective objects are complexes with two non-zero cohomologies.
But, if we first assume that the $2$-term cosilting complex $\sigma_t$, considered as a morphism $\sigma_t:I^0\rightarrow I^1$, is an epimorphism, it would be isomorphic to a stalk complex in $\mathcal{H}_t$.
This is the case exactly when $\sigma_t$ is a $2$-term cotilting complex, or equivalently $C_t:=\Ker(\sigma_t)$ is a cotilting module.
In the end, using results of \cite{ALS2} we can adapt our results to the general case of cosilting torsion pairs.

When $t=(\ts,\fs)$ is a cotilting torsion pair cogenerated by a cotilting module $C$, the associated cohomological functor $\HH^0_t$ sends $\sigma_t$ to $C_t$, considered as a stalk complex concentrated in degree zero.
Then, using Theorem \ref{1.2} we can prove the following.
\begin{theorem}\label{1.3}
Let $C\in \Mod(A)$ be a cotilting module and $\mathcal{Z}_C$ be the set of isoclasses of all indecomposable modules in the product closure of $C$. For any $X\in\mathcal{Z}_C$ consider the canonical morphism
			\[\phi_X:X\longrightarrow \prod_{Y\in\mathcal{Z}_C}Y^{\Rad(X,Y)},\]
		where $\Rad(X,Y)=\{f\in \Hom_{\Lambda}(X, Y)|1_X-gf \space\text{ is invertible for any }\space  g\in\Hom_\Lambda(Y, X)\}$. Set $\mathtt{F}_X:=\Ker(\phi_X)$ and $\mathtt{T}_X:=t(\Coker(\phi_X))$, where $t(-)$ is the torsion functor associated to the torsion pair $({}^{\bot_0}C,\Cogen(C))$ in $\Mod(A)$ cogenerated by $C$.
	Then, exactly one of the following happens.
	\begin{itemize}
	\item[(1)]
	$\mathtt{F}_X=0$ and $\mathtt{T}_X=0$.
	\item[(2)]
	$\mathtt{F}_X\neq 0$ and it is finite dimensional. In this case $\mathtt{T}_X=0$.
	\item[(3)]
	$\mathtt{F}_X\neq 0$ and it is infinite dimensional. In this case $\mathtt{T}_X=0$.
	\item[(4)]
	$\mathtt{T}_X\neq 0$. In this case $\mathtt{T}_X$ is finite dimensional and $\mathtt{F}_X=0$.
	\end{itemize}
	 Furthermore, $\mathbf{F}_C:=\{\mathtt{F}_X\mid X\in\mathcal{Z}_C\; \text{and}\; \mathtt{F}_X\neq 0\}$ is the set of all torsion-free almost torsion modules with respect to $({}^{\bot_0}C,\Cogen(C))$ and $\mathbf{T}_C:=\{T_X\mid X\in\mathcal{Z}_C\; \text{and}\; S_X\neq 0\}$ is the set of all torsion almost torsion-free modules with respect to $({}^{\bot_0}C,\Cogen(C))$. 
\end{theorem}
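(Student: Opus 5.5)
The plan is to transport Theorem \ref{1.2} along the equivalence between $\Prod(C)$ and $\Inj\mathcal{H}_t$. Here $t=(\ts,\fs)$ is the torsion pair in $\modd(A)$ with $\fs=\Cogen(C)\cap\modd(A)$. Since $C$ is cotilting, the complex $\sigma_t$ is a stalk complex, and $\HH^0_t$ identifies $\Prod(C)$ with $\Inj\mathcal{H}_t$ sending $X\mapsto X[0]$.

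\textbf{Step 1: reduce to Theorem \ref{1.2}.} The indecomposable injectives of $\mathcal{H}_t$ are exactly the $X[0]$ with $X\in\mathcal{Z}_C$. They cogenerate $\mathcal{H}_t$, because $C[0]$ is an injective cogenerator and $C$ is a product of objects of $\mathcal{Z}_C$ up to $\Prod$-equivalence (Proposition \ref{2.12}). Morphisms are unchanged, so $\Rad(X,Y)$ is the same in both categories. Hence $\phi_X$, regarded in $\mathcal{H}_t$, is the morphism $\phi_{X[0]}$ of Theorem \ref{1.2}. Its kernel $S_X$ in $\mathcal{H}_t$ is either zero or a simple object with injective envelope $X[0]$, and every simple of $\mathcal{H}_t$ arises this way.

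\textbf{Step 2: compute kernels in the heart.} Recall that $\mathcal{H}_t$ consists of complexes with $H^{-1}\in\mathcal{T}$ and $H^0\in\mathcal{F}$, where $\mathcal{F}=\Cogen(C)$ and $\mathcal{T}={}^{\bot_0}C$. For a morphism $f:M\to N$ between objects of $\mathcal{F}$ viewed in degree $0$, the cone gives a triangle. Truncating with respect to the t-structure shows two things.
\begin{itemize}
\item $\Ker_{\mathcal{H}}(f)$ is an extension: $\Ker_{\Mod}(f)[0]$ sits in it, together with $t(\Coker f)[1]$.
\item $\Coker_{\mathcal{H}}(f)=\Coker(f)/t(\Coker f)$, placed in degree $0$.
\end{itemize}
Since $S_X$ is simple, both contributions cannot be nonzero at once. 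A nonzero $\mathtt{F}_X[0]$ is a subobject of $S_X$, so if both were nonzero we would get $S_X=\mathtt{F}_X[0]$ and $\mathtt{T}_X=0$. Hence exactly one of the following holds: $\mathtt{F}_X=\mathtt{T}_X=0$, or $S_X=\mathtt{F}_X[0]$ with $\mathtt{T}_X=0$, or $S_X=\mathtt{T}_X[1]$ with $\mathtt{F}_X=0$. The first two alternatives split further according to the dimension of $\mathtt{F}_X$, which produces cases (1)--(4).

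\textbf{Step 3: identify the simples.} The remaining input is the description of simples of $\mathcal{H}_t$ from \cite{ALSV,L}. The finitely presented simples are exactly the $B[0]$ with $B$ torsion-free almost torsion for $t$, and the $B[1]$ with $B$ torsion almost torsion-free. The other simples are not finitely presented, and I expect them to be of the form $F[0]$ with $F$ infinite dimensional.

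The main obstacle is case (4): showing that every simple of the form $T[1]$ has $T$ finite dimensional. I would try to show that a simple $T[1]$ with $T\in\mathcal{T}$ forces $T$ to be a minimal nonzero torsion module. Since $\mathcal{T}=\varinjlim\ts$, such a $T$ is a direct limit of modules in $\ts$. Simplicity then forces some finitely generated $T'\in\ts$ to map onto $T$, so $T$ is finitely generated and hence lies in $\modd(A)$.

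Given finite dimensionality, the identifications in Step 3 yield exactly $\mathbf F_C$ and $\mathbf T_C$ as the two sets of brick labels. Distinct $X$ give non-isomorphic simples, since each $X[0]$ is the injective envelope of its own $S_X$. The set $\mathbf T_C$ should be indexed by $\mathtt{T}_X\neq0$, reading $S_X$ in the statement as $\mathtt T_X$.
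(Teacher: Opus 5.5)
Your Steps 1 and 2 follow the paper's argument: regard $\phi_X$ in $\mathcal{H}_t$, apply Theorem \ref{1.2}, and compute $\Ker_{\mathcal{H}_t}(\phi_X)$ as an extension of $\mathtt{F}_X$ and $\mathtt{T}_X$. Simplicity then leaves three alternatives.

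Your shifts are wrong, though. In this HRS heart $\HH^0\in\mathcal{F}$ and $\HH^1\in\mathcal{T}$. The class with $H^{-1}\in\mathcal{T}$, $H^0\in\mathcal{F}$ is not a heart, since $\Hom_{\D(A)}(F[0],T[1][-1])=\Hom_A(F,T)$ need not vanish. The kernel comes from $\mathrm{Cone}(\phi_X)[-1]$ and sits in $0\to\mathtt{F}_X[0]\to S_X\to\mathtt{T}_X[-1]\to 0$, and torsion-free simples are $T[-1]$, not $T[1]$. This is harmless once corrected.

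Your argument for finite dimensionality in case (4) is different from the paper's and valid. If $T[-1]$ is simple with $T\in\mathcal{T}=\varinjlim\ts$, pick a nonzero image $T'\subseteq T$ of a module in $\ts$. Then $0\to T'[-1]\to T[-1]\to (T/T')[-1]\to 0$ is exact in $\mathcal{H}_t$, which forces $T=T'\in\ts$. This replaces the paper's appeal to Proposition \ref{2.21}(3).

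The genuine gap is in the ``Furthermore'' part. The statement says $\mathbf{F}_C$ is the set of \emph{all} torsion-free almost torsion modules for $({}^{\bot_0}C,\Cogen(C))$ in $\Mod(A)$. This set contains the infinite-dimensional $\mathtt{F}_X$ of case (3).

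Your Step 3 only invokes the description from \cite{ALSV,L} of \emph{finitely presented} simples via brick labels of $(\ts,\fs)$ in $\modd(A)$. For the remaining simples you only expect them to be $F[0]$ with $F$ infinite dimensional, and nothing relates such $F$ to torsion-free almost torsion modules. So your closing claim that $\mathbf{F}_C$ and $\mathbf{T}_C$ are ``the two sets of brick labels'' fails whenever case (3) occurs. You also never show that every infinite-dimensional torsion-free almost torsion module arises as some $\mathtt{F}_X$. Even for the finite-dimensional part, you would still need that the almost-torsion notions for $(\ts,\fs)$ in $\modd(A)$ agree with those for $(\mathcal{T},\mathcal{F})$ in $\Mod(A)$.

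The missing input is Proposition \ref{2.21}(1),(2), which the paper uses. It says the simples of $\mathcal{H}_t$ are exactly the $F[0]$ with $F$ torsion-free almost torsion and the $T[-1]$ with $T$ torsion almost torsion-free. Both notions are taken relative to $(\mathcal{T},\mathcal{F})$ in $\Mod(A)$, with no finiteness assumption.

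You can also check this directly. By the long exact cohomology sequence, an epimorphism $F[0]\to Q$ in $\mathcal{H}_t$ with $F\in\mathcal{F}$ is the same as a map $g:F\to Q$ with $Q\in\mathcal{F}$ and $\Coker(g)\in\mathcal{T}$. Simplicity of $F[0]$ is then equivalent to the two conditions of Definition \ref{2.20}, and the case of $T[-1]$ is dual. Combined with Theorem \ref{1.2}, this gives both identifications. Your finite-generation argument then also shows that every torsion almost torsion-free module is finite dimensional.
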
 
Using this theorem, and some results from \cite{ALS2}, we can obtain all torsion-free almost torsion and torsion almost torsion-free modules with respect to  a cosilting torsion pair $(\mathcal{T},\mathcal{F})$. Also we give nessecary and sufficient condition for possiblity of left mutation or right mutation for indecomposable summands in the cosilting pair $(\mathcal{Z}_t,\mathcal{I}_t)$. Indeed, $X$ is right mutable (resp. left mutable) if and only if the situation $(2)$ (resp. $(4)$) in Theorem \ref{1.3} occurs, and $X$ is not mutable if and only if one of the situations $(1)$ or $(3)$ occurs. see Theorem \ref{3.15}.

Also by applying Theorem \ref{1.3}, we can construct the short exact sequences that the authors of \cite{AHL} considered necessary for and indecomposable module in $\mathcal{Z}_C$ to be so-called critical or special neg-isolated, see Theorem \ref{3.10}.
	
	\section{preliminaries}
	\subsection{Notation}
Throughout the paper, $A$ is a finite dimensional algebra. We denote by $\Mod(A)$ (resp. $\modd(A)$) the category of all (resp. finitely presented) right $A$-modules, and by $\tau$, the Auslander-Reiten translation. Also we denote by $\D(A)$ the unbounded derived category of $\Mod(A)$ and by $\K^b(\Inj A)$ (resp. $\K^b(\rm inj A)$) the homotopy category of bounded complexes of (resp. finitely generated) injective $A$-modules.
	
Let $\mathcal{A}$ be an additive category and $\mathcal{X}$ be a class of objects in $\mathcal{A}$. By $\Prod(\mathcal{X})$ (resp. $\rm prod(\mathcal{X})$) we mean the subcategory of all objects in $\mathcal{A}$ that are direct summand of a product (resp. finite product) of objects in $\mathcal{X}$. Moreover, if $\mathcal{A}$ is abelian, we denote by $\Cogen(\mathcal{X})$ (resp. $\cogen(\mathcal{X})$) the subcategory of all objects that are subobject of an object in $\Prod(\mathcal{X})$ (resp. $\rm prod(\mathcal{X})$).
	
	Let $\mathcal{X}$ be a subcategory of $\D(A)$ and $I$ be a subset of integers. We set
	\begin{equation}
		\mathcal{X}^{\bot_I}:=\{M\in \D(A) \mid \Hom_{\D(A)}(X,M[i])=0, \forall i\in I \;\text{and}\; \forall X\in \mathcal{X}\}.\notag
	\end{equation}
${}^{\bot_I}\mathcal{X}$ is defined dually. As usual we denote the interval $\{i\in \mathbb{Z}\mid i\geq 0 \}$ by $\geq 0$ and the interval $\{i\in \mathbb{Z}\mid i\leq 0 \}$ by $\leq 0$. Similar notations will be used for other intervals.
	
In a similar way, for a subcategory $\mathcal{X}\subseteq \Mod(A)$ and a subset of non-negative integers $I$ we define $\mathcal{X}^{\bot_I}$ and $^{\bot_I}\mathcal{X}$.
    
When $I$ consists of only one number, $I=\{n\}$, we will use the notations $\mathcal{X}^{\bot_n}$ and ${}^{\bot_n}\mathcal{X}$ instead of $\mathcal{X}^{\bot_{\{n\}}}$ and ${}^{\bot_{\{n\}}}\mathcal{X}$. Similarly, when $\mathcal{X}=\{X\}$ we use the notations $X^{\bot_I}$ and ${}^{\bot_I}X$ instead of $\{X\}^{\bot_I}$ and $^{\bot_I}\{X\}$.
	
\subsection{Torsion pairs}
Our main objects of study in this paper are torsion pairs in $\modd(A)$. 
But these are closely related to a certain of torsion pairs in $\Mod(A)$ and also some torsion pairs in other abelian categories. So we start from the definition of torsion pair in general abelian categories.
	
\begin{definition}
Let $\mathcal{A}$ be an abelian category. A pair of full subcategories $(\mathcal{T},\mathcal{F})$ is called a {\em torsion pair} if it satisfies the following conditions.
	\begin{itemize}
			\item[(1)] $\Hom_{\mathcal{A}}(T,F)=0$ for every objects $T\in \mathcal{T}$ and $F\in \mathcal{F}$.
			\item[(2)] For any object $X\in \mathcal{A}$, there is a short exact sequence 
		\begin{equation}
				0\rightarrow t(X)\rightarrow X\rightarrow f(X)\rightarrow 0 \notag
		\end{equation}
			with $t(X)\in \mathcal{T}$ and $f(X)\in \mathcal{F}$. This short exact sequence is called the {\it canonical short exact sequence} for $X$.
	\end{itemize}
	In this case $\mathcal{T}$ is called the {\em torsion class} and $\mathcal{F}$ is called the {\em torsion free class}.
\end{definition}
	
Torsion pairs in $\modd(A)$, ordered by inclusion of their torsion classes, form a complete lattice denoted by $\textbf{tors}(A)$ \cite{DIRRT}. Because this lattice encodes essential information about $A$, many authors studied this lattice \cite{AIR,DIJ,BKZ,DIRRT}.
Let us first recall the following definition.
	
\begin{definition}
	Let $\mathcal{A}$ be an additive category and $\mathcal{B}$ be a full subcategory of $\mathcal{A}$. 
	\begin{itemize}
	\item[(1)] 	$\mathcal{B}$ is called a {\it preenveloping subcategory} of $\mathcal{A}$ (or a {\it covariantly finite subcategory} of $\mathcal{A}$), if for every $A\in \mathcal{A}$ there exist an object $B\in\mathcal{B}$ and a morphism $f : A\rightarrow B$ such that, for all $B'\in\mathcal{B}$, the sequence of abelian groups $\Hom_\mathcal{A}(B, B')\rightarrow \Hom_\mathcal{A}(A, B')\rightarrow 0$ is exact. Such a morphism $f$ is called a {\it $\mathcal{B}$-preenvelope of $A$} (or a {\it left $\mathcal{B}$-approximation of $A$}).
	\item[(2)] A $\mathcal{B}$-preenvelope $f: A\rightarrow B$ is called a {\it $\mathcal{B}$-envelope} (or a {\it minimal left $\mathcal{B}$-approximation}), if any morphism $g:B\rightarrow B$ satisfying $gf=f$ is an isomorphism.
	\end{itemize}
	The notions of {\it precovering subcategory} (or {\it contravariantly finite subcategory}), {\it $\mathcal{B}$-precover} (or {\it right $\mathcal{B}$-approximation}) and {\it $\mathcal{B}$-cover} (or {\it minimal right $\mathcal{B}$-approximation}) are defined dually. A {\it functorially finite subcategory of $\mathcal{A}$} is a subcategory which is both covariantly finite and contravariantly finite in $\mathcal{A}$.
	\end{definition}
Given a torsion pair $(\ts,\fs)$ in $\modd(A)$ (or any abelian category), one can immediately see that the canonical short exact sequence for any $M\in\modd(A)$, gives $\ts$-cover and $\fs$-envelop for $M$. Therefore, $\ts$ is covering (contravariantly finite) and $\fs$ is enveloping (covariantly finite).
For torsion pairs in $\modd(A)$ we have the following nice symmetry.
	
\begin{theorem}$($\cite{S}$)$\label{2.3}
Let $(\ts,\fs)$ be a torsion pair in $\modd(A)$. Then $\ts$ is (pre)enveloping if and only if $\fs$ is (pre)covering.
\end{theorem}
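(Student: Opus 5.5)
The plan is to reduce both directions to the Auslander–Smal\o{} fact that $\mathrm{Fac}(M)$ and $\mathrm{Sub}(M)$ are functorially finite for a finitely generated module $M$. By the duality $\D=\Hom_k(-,k)\colon \modd(A)\to\modd(A^{op})$ it suffices to prove one implication. Indeed, $\D$ sends a torsion pair $(\ts,\fs)$ in $\modd(A)$ to the torsion pair $(\D\fs,\D\ts)$ in $\modd(A^{op})$, and it exchanges preenvelopes with precovers. So I will only show: if $\ts$ is preenveloping, then $\fs$ is precovering. The "minimal" version (envelope/cover) follows afterwards, since in the Krull--Schmidt category $\modd(A)$ every preenvelope (precover) can be reduced to an envelope (cover) by splitting off a direct summand.

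Step 1: $\ts$ has a finite generator. Let $u\colon A\to T_0$ be a left $\ts$-approximation of $A_A$. For $X\in\ts$, choose an epimorphism $A^n\to X$. It factors through $u^n\colon A^n\to T_0^n$, so $X\in\mathrm{Fac}(T_0)$. Since $\ts$ is closed under quotients, $\ts=\mathrm{Fac}(T_0)$, and therefore $\fs=T_0^{\bot_0}$.

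Step 2: replace $T_0$ by the Ext-projective generator. Since $\ts=\mathrm{Fac}(T_0)$ is functorially finite, Auslander--Smal\o{} gives that the Ext-projective objects of $\ts$ form $\mathrm{add}(T)$ for some finitely generated $T$, with $\ts=\mathrm{Fac}(T)$. Let $e$ be the idempotent such that $eA$ is the largest projective summand with $\Hom_A(eA,\ts)=0$.

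Step 3: describe $\fs$ as $\mathrm{Sub}$ of a finite module. I will show $\fs=\mathrm{Sub}(\tau T\oplus \nu(eA))$, where $\nu=\D\Hom_A(-,A)$ is the Nakayama functor. The plan for this is to use the Auslander--Reiten formula $\Ext^1_A(T,X)\cong \D\overline{\Hom}_A(X,\tau T)$ together with the Ext-projectivity of $T$ in $\ts$:
\begin{itemize}
\item[(a)] For $\supseteq$: $\tau T\in T^{\bot_0}$, because $\Hom_A(T,\tau T)$ would produce a torsion submodule whose extension violates Ext-projectivity. Also $\nu(eA)\in T^{\bot_0}$, since $\Hom_A(T,\nu(eA))\cong \D\Hom_A(eA,T)=0$.
\item[(b)] For $\subseteq$: given $X\in\fs$, take the minimal left $\mathrm{add}(\tau T\oplus\nu eA)$-approximation $X\to N$. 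Its kernel $K$ lies in $\fs$, and for every $Y\in\ts$ it satisfies $\Hom_A(K,\tau T)=0$ modulo injectives and $\Hom_A(K,\nu eA)=0$. Using the AR formula, $\Ext^1_A(T,K)=0$ and $\Hom_A(eA,K)\ne 0$ unless $K=0$; I would then argue that $K$ would generate a nonzero module in $\ts$, which contradicts $K\in\fs$.
\end{itemize}
This is the standard Adachi--Iyama--Reiten description.

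Step 4: conclude. By Auslander--Smal\o{}, $\mathrm{Sub}(N)$ is contravariantly finite for any finitely generated $N$. Hence $\fs$ is precovering, and by duality the equivalence follows.

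The main obstacle is Step 3. The AR formula only controls morphisms modulo injectives, and the injective part is exactly what the summand $\nu(eA)$ is there to absorb. If that bookkeeping becomes messy, I would instead simply quote Smal\o's original theorem~\cite{S} or the functorially finite torsion class correspondence from \cite{AIR} for this identification.
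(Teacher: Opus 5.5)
Your proposal has a genuine gap in Step~3(b). The paper gives no proof of this statement; it quotes Smal\o{} \cite{S}, so your proof has to stand on its own. Its skeleton is the standard one, and most steps are fine: the duality reduction, the passage from preenvelopes to envelopes in $\modd(A)$, $\ts=\mathrm{Fac}(T_0)$, the Auslander--Smal\o{} Ext-projective generator $T$, the inclusion $\mathrm{Sub}(\tau T\oplus\nu(eA))\subseteq\fs$, and Step~4.

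But the whole content of the theorem is the reverse inclusion $\fs\subseteq\mathrm{Sub}(\tau T\oplus\nu(eA))$, and Step~3(b) does not prove it.
(i) The claim that $\Hom_A(K,\tau T)$ vanishes modulo injectives does not follow from $X\to N$ being a left approximation. The module $\tau T$ is not injective, so maps $K\to\tau T$ need not extend to $X$.
(ii) The $\nu(eA)$-condition is stated backwards. Since $\Hom_A(K,\nu(eA))\cong\D\Hom_A(eA,K)$, what you actually get is $Ke=0$.
(iii) The decisive claim, that such a $K$ must vanish, is only announced. Tilting theory does not give it directly: $T$ is tilting over $A/\Ann(\ts)$, but $K$ is a priori only an $A/AeA$-module, and $AeA$ can be strictly smaller than $\Ann(\ts)$. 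This is exactly where the maximality of $e$ has to enter, and your sketch never uses it.

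Your fallback does not rescue the step. Quoting \cite{S} is circular. The description $\fs=\mathrm{Sub}(\tau T\oplus\nu(eA))$ in \cite{AIR} is not an independent input either, because their comparison of functorially finite torsion and torsion-free classes uses \cite{S}.

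The step can be completed along your own lines, as follows.

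First, $\Hom_A(K,\tau T)=0$. Restriction $\Hom_A(X,\tau T)\to\Hom_A(K,\tau T)$ is zero, so $\Hom_A(K,\tau T)$ embeds into $\Ext^1_A(X/K,\tau T)\cong\D\underline{\Hom}_A(\tau^{-}\tau T,X/K)$. This vanishes because $\tau^{-}\tau T\in\mathrm{add}(T)$ and $X/K$ embeds in $N\in\fs$. Hence $\Ext^1_A(T,K)=0$ by the AR formula.

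Second, injectivity of $\nu(eA)$ gives $\Hom_A(K,\nu(eA))=0$, i.e.\ $Ke=0$.

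Third, let $u\colon A\to T_0$ be the minimal left $\ts$-approximation and $I=\Ann(\ts)$. Then $\Ker(u)=I$, and $0\to A/I\to T_0\to T_1\to 0$ is exact with $T_0,T_1\in\mathrm{add}(T)$. Applying $\Hom_A(-,K)$ places $\Hom_A(A/I,K)\cong\{k\in K\mid kI=0\}$ between $\Hom_A(T_0,K)=0$ and $\Ext^1_A(T_1,K)=0$, so it is zero.

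Fourth, use maximality of $e$. Let $S$ be simple with $Se=0$. Then $S$ is the top of some $fA\notin\mathrm{add}(eA)$, so $\Hom_A(fA,\ts)\neq0$. Hence $f\notin I$, and $S$ is a quotient of the nonzero projective $A/I$-module $fA/fI$, so $SI=0$.

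Finally, since $Ke=0$, this gives $\mathrm{soc}(K)\,I=0$. Therefore $\mathrm{soc}(K)\subseteq\Hom_A(A/I,K)=0$, so $K=0$ and $X\to N$ is a monomorphism. With this inserted, your argument is a complete, non-circular proof.
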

A torsion pair $(\ts,\fs)$ in $\modd(A)$ is called {\it functorially finite} if $\ts$, or equivalently $\fs$, is functorially finite.

When the torsion pair $(\ts,\fs)$ is functorially finite, it is possible to paramerize it by a single module. Indeed, if we choose an $\fs$-precover $f:N\rightarrow DA$ for the injective cogenerator $DA$, we can easily see that $\fs=\cogen(N)$. Dually, $\ts=\rm gen(M)$, where $g:A\rightarrow M$ is the $\ts$-preenvelope for $A$ (which is a progenerator).
	
Continuing this line, Adachi, Iyama and Reiten in \cite{AIR} defined support $\tau$-tilting modules and (the dual notion of) support $\tau^-$-tilting modules in order to parametrize functorially finite torsion pairs.
Recall that a pair $(M,I)$ consists of a module $M$ and injecive module $I$ is called a {\it $\tau^-$-tilting pair} if $M$ is $\tau^-$-rigid (which means that $\Hom_A(\tau^-M,M)=0$), $\Hom_A(M,I)=0$ and $|M|+|I|=|A|$. Here $|-|$ counts the number of non-isomorphic indecomposable summands of a given finitely generated module.

For each $\tau^-$-tilting pair $(M,I)$, and a minimal injective copresentation $0\rightarrow M\rightarrow I^0\overset{i}{\rightarrow}I_0$, we can construct the $2$-term complexes
\[I^0\overset{[i,0]^T}{\longrightarrow} I^1\oplus I\]
in $\K^b(\rm inj A)$. This is a $2$-term cosilting complex, see Definition 2.5. And this correspondence gives a bijection between $\tau^-$-tilting pairs and $2$-term cosilting complexes in $\K^b(\rm inj A)$.
	
Let us state one of the main results of \cite{AIR} for $\tau^-$-tilting pairs. The dual statement holds for $\tau$-tilting pairs.
\begin{theorem}\label{2.4}
There is a bijection between
	\begin{itemize}
	\item[(1)] equivalence classes of $\tau^-$-tilting pairs,
	\item[(2)] equivalent classes of $2$-term cosilting complexes in $\K^b(\rm inj A)$, and
	  \item[(3)] functorially finite torsion-free classes in $\modd(A)$.
	\end{itemize}
\end{theorem}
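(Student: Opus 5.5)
This is the dual of the Adachi--Iyama--Reiten theorem, and I would prove it by dualizing their argument. The tool is the duality $\D=\Hom_k(-,k)\colon\modd(A)\to\modd(A^{op})$. It sends $\tau^-$-tilting pairs over $A$ to support $\tau$-tilting pairs over $A^{op}$, since $\D\tau^-\cong\tau\D$, and it sends injectives to projectives. It also swaps torsion-free classes in $\modd(A)$ with torsion classes in $\modd(A^{op})$, and it preserves functorial finiteness. Together with the termwise duality $\K^b(\rm inj\,A)\simeq\K^b(\rm proj\,A^{op})^{op}$, which sends $2$-term cosilting complexes to $2$-term silting complexes, this reduces the statement to the original theorem of \cite{AIR}. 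Still, I would write out the maps explicitly so the correspondence is visible.

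For the bijection (1)$\leftrightarrow$(3), I send a $\tau^-$-tilting pair $(M,I)$ to $\cogen(M)$.
\begin{itemize}
\item \emph{Torsion-free class.} $\tau^-$-rigidity gives $\Ext^1_A(\cogen M,M)=0$ by the Auslander--Reiten formula, so $\cogen(M)$ is closed under extensions. Since it is obviously closed under submodules, it is a torsion-free class.
\item \emph{Functorial finiteness.} A minimal left $\rm add(M)$-approximation shows $\cogen(M)$ is functorially finite.
\item \emph{Inverse map.} Given a functorially finite torsion-free class $\fs$, take the minimal $\fs$-precover $N\to\D A$, as recalled before Theorem \ref{2.4}. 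Let $M$ be the direct sum of the Ext-projectives of $\fs$, so that $\fs=\cogen(M)$. Let $I$ be the injective with $\Hom(\fs,I)=0$ of maximal size.
\item \emph{The resulting pair is $\tau^-$-tilting.} Rigidity comes from Ext-projectivity via the Auslander--Reiten formula. The count $|M|+|I|=|A|$ is the delicate point. I would obtain it from the dual of the Bongartz-type completion, using the exact sequence $0\to N'\to N\to\D A$. Alternatively, reduce modulo the ideal annihilating $\fs$: there $M$ becomes cotilting, so it has $|A/\mathrm{ann}|$ summands, and this number equals $|A|-|I|$.
\end{itemize}

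For (1)$\leftrightarrow$(2), I use the map already recalled: send $(M,I)$ to $I^0\to I^1\oplus I$. For two-term complexes $\sigma_M$, a standard computation identifies $\Hom_{\K}(\sigma_M,\sigma_{M'}[1])$ with $\D\Hom(\tau^-M',M)$ plus an injective-twist term $\Hom(M',I)$. So presilting-type vanishing is equivalent to $\tau^-$-rigidity together with $\Hom(M,I)=0$. Generation of $\K^b(\rm inj\,A)$ corresponds to the count $|M|+|I|=|A|$, because $2$-term cosilting complexes have exactly $|A|$ summands by a Grothendieck group argument. The inverse takes $H^0$ of a $2$-term cosilting complex together with the injective summand of degree-one stalk parts.

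The main obstacle is the counting statement: the fact that maximal rigid data have exactly $|A|$ summands, and that completion exists. I would handle it by invoking \cite{AIR} through the duality $\D$ rather than reproving it. The remaining checks are that equivalence (same add-closure) is preserved under all the maps, and that the maps are mutually inverse. These are routine once the maps are fixed.
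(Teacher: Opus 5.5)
Your strategy matches the paper's. The paper gives no argument for Theorem \ref{2.4} beyond quoting \cite{AIR} in dual form, and your reduction via $\D$ (using $\D\tau^-\cong\tau\D$, injectives $\leftrightarrow$ projectives, and $\K^b(\mathrm{inj}\,A)\simeq\K^b(\mathrm{proj}\,A^{op})^{op}$) is exactly that dualization. Your identification $\Hom_{\K}(\sigma_M,\sigma_{M'}[1])\cong\D\Hom_A(\tau^-M',M)$, with the extra term coming from the stalk summand $0\to I$, is also right.

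There is, however, a concrete error in your explicit inverse map. A functorially finite torsion-free class $\fs$ must be sent to the direct sum of its \emph{Ext-injective} objects, i.e.\ those $X\in\fs$ with $\Ext^1_A(\fs,X)=0$, as the paper says right after the statement. It is not sent to its Ext-projectives, and with Ext-projectives both claims in that bullet fail:
\begin{itemize}
\item For $\fs=\modd(A)$ the Ext-projectives are the projective modules. But $\cogen(A)$ consists only of torsionless modules, which for the path algebra of $1\to 2$ misses the simple injective non-projective module. So $\fs=\cogen(M)$ fails.
\item The Auslander--Reiten formula turns $\Ext^1_A(M,\fs)=0$ into $\overline{\Hom}_A(\fs,\tau M)=0$. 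That is a $\tau$-rigidity-type condition, not $\Hom_A(\tau^-M,M)=0$, so the rigidity step fails too.
\end{itemize}

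The fix is already implicit in what you wrote. Your first bullet shows that a $\tau^-$-rigid $M$ satisfies $\Ext^1_A(\cogen(M),M)=0$, i.e.\ it is Ext-injective in $\cogen(M)$. Also, since $\Ext^1_{A^{op}}(\D Y,\D X)\cong\Ext^1_A(X,Y)$, the Ext-projectives of the torsion class $\D\fs\subseteq\modd(A^{op})$ are exactly the duals of the Ext-injectives of $\fs$. So running the argument faithfully through \cite{AIR} produces the correct module.

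Consistently, the minimal $\fs$-precover $p\colon N\to\D A$ you use is itself Ext-injective in $\fs$. Given $0\to N\overset{i}{\to}E\to X\to 0$ with $X\in\fs$, extend $p$ to $q\colon E\to\D A$ and factor $q=pr$. Then $pri=p$, so right minimality makes $ri$ an automorphism and $i$ splits.

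A smaller point concerns your functorial-finiteness bullet. A left $\mathrm{add}(M)$-approximation only yields covariant finiteness, which every torsion-free class has anyway. The half that needs an argument is contravariant finiteness. It holds because a right $\mathrm{add}(M)$-approximation $M^n\to\D A$ is already a right $\cogen(M)$-approximation: maps from submodules of $M^m$ to $\D A$ extend to $M^m$. For an arbitrary module, you then pull back along its injective envelope.
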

	
	In this correspondence a support $\tau^-$-tilting $A$-module $M$ is sent to the torsion-free class $\cogen(M)$, and conversely a functorially finite torsion-free class $\fs\subseteq \modd(A)$ is send to the direct sum of all pairwise non-isomorphic $\Ext$-injective objects of $\fs$. As we mentioned above, the reason for validity of this result is that for a functorially finite torsion-free class $\fs$ in $\modd(A)$, we can choose an $\fs$-cover $M\rightarrow DA$ and then any module in $\fs$ admits an monomorphism into a finite product of copies of $M$, i.e. $\fs=\rm cogen(M)$. For non-functorially finite torsion-free classes this argument doesn't work.
Below we briefly explain how this problem was solved.
First let recall the following theorem due to Crawley-Boevey \cite{CB}. A torsion pair $(\mathcal{T},\mathcal{F})$ in $\Mod(A)$ is called {\it of finite type}, if $\mathcal{F}$ is closed under direct limits.
	
\begin{theorem} $($\cite{CB}$)$\label{2.5}
There is a bijection between
\begin{itemize}
	\item[(1)]
    torsion pairs in $\modd(A)$, and
	\item[(2)]
    torsion pairs of finite type in $\Mod(A)$.
	\end{itemize}
Under this bijection a torsion pair $(\ts,\fs)$ in $\modd(A)$ is sent to its direct limit closure $(\underrightarrow{\Lim}\ts,\underrightarrow{\Lim}\fs)$.
The inverse map sends a torsion pair $(\mathcal{T},\mathcal{F})$ with $\mathcal{F}=\underrightarrow{\Lim}\mathcal{F}$, to the restricted torsion pair $(\mathcal{T}\cap \modd(A),\mathcal{F}\cap \modd(A))$.
\end{theorem}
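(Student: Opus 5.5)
Since this is Crawley-Boevey's theorem, quoted from \cite{CB}, I only sketch how I would establish both directions. Throughout I use two standard facts: $\Hom_A(X,-)$ commutes with direct limits when $X$ is finitely presented, and every module is the directed union of its finitely generated submodules. The latter are finitely presented because $A$ is finite dimensional, hence noetherian.

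\emph{From $\modd(A)$ to $\Mod(A)$.} Start with a torsion pair $(\ts,\fs)$ in $\modd(A)$ and put $\mathcal{T}=\underrightarrow{\Lim}\ts$ and $\mathcal{F}=\underrightarrow{\Lim}\fs$.

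\begin{enumerate}[(i)]
\item \emph{Orthogonality.} For $T\in\ts$ and a directed system $(F_j)$ in $\fs$ we get $\Hom(T,\underrightarrow{\Lim}F_j)=\underrightarrow{\Lim}\Hom(T,F_j)=0$. Then for $F\in\mathcal{F}$ and a directed system $(T_i)$ in $\ts$ we get $\Hom(\underrightarrow{\Lim}T_i,F)=\varprojlim\Hom(T_i,F)=0$.
\item \emph{Canonical sequences.} Write $M=\bigcup M_i$ with each $M_i$ finitely generated, and take the canonical sequences $0\to t(M_i)\to M_i\to f(M_i)\to 0$ in $\modd(A)$. Since the torsion radical is functorial, these form a directed system of short exact sequences. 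Direct limits are exact in $\Mod(A)$, so $0\to \underrightarrow{\Lim}\, t(M_i)\to M\to \underrightarrow{\Lim}\, f(M_i)\to 0$ is exact, with its outer terms in $\mathcal{T}$ and $\mathcal{F}$. Together with (i) this shows $(\mathcal{T},\mathcal{F})$ is a torsion pair; in particular $\mathcal{T}={}^{\bot_0}\mathcal{F}$ and $\mathcal{F}=\mathcal{T}^{\bot_0}$.
\item \emph{Finite type.} I need $\mathcal{F}$ to be closed under direct limits. I would cite Lenzing's description: for a class $\mathcal{C}\subseteq\modd(A)$ closed under finite direct sums, $\underrightarrow{\Lim}\,\mathcal{C}$ consists exactly of the modules $M$ such that every map from a finitely presented module into $M$ factors through an object of $\mathcal{C}$. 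This factorization condition is visibly preserved under direct limits, because maps from finitely presented modules factor through some stage of the limit.
\item \emph{Restriction recovers $(\ts,\fs)$.} If $X\in\modd(A)\cap\underrightarrow{\Lim}\,\fs$, then by the same description $1_X$ factors through some $F\in\fs$. So $X$ is a summand of $F$, and $X\in\fs$ because $\fs$ is closed under submodules. The same argument gives $\modd(A)\cap\mathcal{T}=\ts$, using that $\ts$ is closed under quotients.
\end{enumerate}

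\emph{From $\Mod(A)$ to $\modd(A)$.} Start with a torsion pair $(\mathcal{T},\mathcal{F})$ of finite type.

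\begin{enumerate}[(i)]
\item \emph{The restriction is a torsion pair.} For $X\in\modd(A)$, its torsion submodule $t(X)$ is finitely presented since $A$ is noetherian, and then so is $X/t(X)$. Hence $(\mathcal{T}\cap\modd(A),\,\mathcal{F}\cap\modd(A))$ is a torsion pair in $\modd(A)$.
\item \emph{Recovering $\mathcal{F}$.} The inclusion $\underrightarrow{\Lim}(\mathcal{F}\cap\modd(A))\subseteq\mathcal{F}$ holds because $\mathcal{F}$ is closed under direct limits. Conversely, any $M\in\mathcal{F}$ is the directed union of its finitely generated submodules. These lie in $\mathcal{F}$, which is closed under submodules, and are finitely presented, so $M\in\underrightarrow{\Lim}(\mathcal{F}\cap\modd(A))$.
\item \emph{Recovering $\mathcal{T}$.} Since $\mathcal{T}={}^{\bot_0}\mathcal{F}$, the torsion class is determined as well. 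It equals $\underrightarrow{\Lim}(\mathcal{T}\cap\modd(A))$ by step (ii) of the first direction, applied to the restricted pair.
\end{enumerate}

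Thus the two maps are mutually inverse.

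\emph{Main obstacle.} The one non-formal step is showing that $\underrightarrow{\Lim}\,\fs$ is closed under direct limits and meets $\modd(A)$ exactly in $\fs$. For this I would invoke Lenzing's (equivalently Crawley-Boevey's) factorization characterization of $\underrightarrow{\Lim}\,\mathcal{C}$ rather than reprove it. Everything else follows formally from exactness of direct limits and the noetherianity of $A$.
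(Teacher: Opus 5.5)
Your sketch is correct. The paper gives no proof of this statement; it only quotes the result from Crawley-Boevey, so there is no in-paper argument to compare against. Your route through the factorization description of $\underrightarrow{\Lim}\,\mathcal{C}$ is the standard Lenzing/Crawley-Boevey one.

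The step you single out as the main obstacle can in fact be done without Lenzing's theorem. The computation in your step (i), $\Hom_A(\underrightarrow{\Lim}T_i,M)=\varprojlim\Hom_A(T_i,M)$, shows that $\mathcal{T}^{\bot_0}=\ts^{\bot_0}$ in $\Mod(A)$. Your step (ii) gives $\mathcal{F}=\mathcal{T}^{\bot_0}$. Hence $\mathcal{F}=\ts^{\bot_0}$, which is closed under direct limits simply because every $T\in\ts$ is finitely presented.

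Likewise, take $X\in\modd(A)\cap\mathcal{F}$ and its canonical sequence in $\modd(A)$. The inclusion $t(X)\to X$ is a map from $\ts\subseteq\mathcal{T}$ to $\mathcal{F}$, hence zero, so $X\cong f(X)\in\fs$. Symmetrically, for $X\in\modd(A)\cap\mathcal{T}$ the projection $X\to f(X)$ is zero, so $X=t(X)\in\ts$.

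So the whole bijection follows formally from three facts: finite presentation, exactness of direct limits, and noetherianity of $A$. Lenzing's characterization is the more general tool, since it applies to any additive class of finitely presented modules and not only to halves of torsion pairs. Here, though, the Hom-orthogonality of the torsion pair makes it unnecessary.
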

	
The good news is that, $\mathcal{F}=\underrightarrow{\Lim}\fs$ is a covering class in $\Mod(A)$ by \cite{El}. So, we can hope that the strategy we used above for functorially finite torsion-free classes will work here too. This was done in \cite{ZW}.
Assume that we are given a torsion pair of finite type $(\mathcal{T},\mathcal{F})$ in $\Mod(A)$. Since $\mathcal{F}$ is covering, we can choose an $\mathcal{F}$-cover $C_0\overset{f}{\rightarrow} DA$ for the injective cogenerator $DA$. By taking kernel we obtain the following exact sequence.
\[0\rightarrow C_1\rightarrow C_0\rightarrow DA.\]
Then $C:=C_0\oplus C_1$ is a cosilting module, in the sense of the following definition, and we have $(\mathcal{T},\mathcal{F})=({}^{\bot_0}C,\Cogen(C))$. Note that this is exactly infinitely generated version of Theorem \ref{2.4} which parametrized functorially finite torsion pairs in $\modd(A)$ in terms of support $\tau^-$-tilting modules.

\begin{definition}\label{2.6}
\begin{itemize}
\item[(1)]
A complex $Z\in \K^b(\Inj A)$ is called a {\it cosilting complex} if
\begin{itemize}
\item[(a)]
$\Hom_{\K^b(\Inj A)}(Z,Z[i])=0$ for all $i\geq 1$.
\item[(b)]
The smallest triangulated subcategory of $\K^b(\Inj A)$ containing $Z$ and closed under products and direct summands is $\K^b(\Inj A)$ itself.
\end{itemize}
\item[(2)]
A cosilting complex $Z$ is called {\it $2$-term}, if $Z^n=0$ for all $n\neq 0,1$. We identify a $2$-term cosilting complex $\cdots\rightarrow Z^0\overset{\sigma}{\rightarrow}Z^1\rightarrow\cdots$ with just the morphism $\sigma$.
\item[(3)]
A module $C\in\Mod(A)$ is called a {\it cosilting module} if there exists an injective copresentation $0\rightarrow C\rightarrow I_0\overset{\sigma}{\rightarrow}I_1$ such that
\[\Cogen(C)=\mathcal{C}_{\sigma}:=\{M\in\Mod(A) \mid \Hom_A(M,\sigma) \text{ is surjective}\}.\]
\end{itemize}
\end{definition}
Every $2$-term cosilting complex $\sigma$ (or any cosilting complex) induces a t-structure $({}^{\perp_{\leq 0}}\sigma,{}^{\perp_{> 0}}\sigma)$ in $\D(A)$.
Two cosilting complexes $\sigma$ and $\sigma'$ are said to be equivalent if they cogenerate the same t-structure or equivalently $\Prod(\sigma)=\Prod(\sigma')$.
On the other hand, in $\Mod(A)$, every cosilting module $C$ cogenerate a torsion pair $({}^{\bot_0}C,\Cogen(C))$, and two cosilting modules $C$ and $C'$ are said to be equivalent if they cogenerate the same torsion pair or equivalently $\Prod(C)=\Prod(C')$. The following proposition simply says that cosilting modules are the shadow of $2$-term cosilting complexes, and the associated torsion pairs are the shadow of corresponding t-structures.

\begin{proposition}\label{2.7}
\begin{itemize}
\item[(1)]
The zeroth cohomology functor $\rm H^0(-):\D(A)\rightarrow \Mod(A)$ induces a bijection between
\begin{itemize}
\item[(a)]
equivalence classes of $2$-term cosilting complexes, and
\item[(b)] 
equivalence classes of cosilting modules.
\end{itemize}
\item[(2)]
$\rm H^0(-)$ sends the t-structure cogenerated by a $2$-term cosilting complex $\sigma$ to the torsion pair cogenerated by the cosilting module $\rm H^0(\sigma)$.
\end{itemize}
\end{proposition}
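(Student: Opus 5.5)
We must propose a proof of Proposition 2.7: $\HH^0$ gives a bijection between equivalence classes of 2-term cosilting complexes and cosilting modules, and sends the t-structure to the torsion pair.

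The plan is to pass through the module-theoretic description of a $2$-term complex $\sigma:I_0\to I_1$ in $\K^b(\Inj A)$, for which $\HH^0(\sigma)=\Ker\sigma$. The key computation is this. For a module $M$ regarded as a stalk complex in degree $0$, we have $\Hom_{\D(A)}(M,\sigma)=\Ker\Hom_A(M,\sigma)\cong\Hom_A(M,\Ker\sigma)$ and $\Hom_{\D(A)}(M,\sigma[1])=\Coker\Hom_A(M,\sigma)$. Both identifications use that the $I_i$ are injective, so homotopy classes of maps from $M$ are computed directly.

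Consequently $\Hom_A(M,\sigma)$ is surjective if and only if $M\in{}^{\perp_1}\sigma$. Hence $\mathcal{C}_\sigma=\Mod(A)\cap{}^{\perp_{>0}}\sigma$, since $\Hom(M,\sigma[i])=0$ automatically for $i\ge 2$.

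The steps are as follows.

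\textbf{Step 1.} Let $\sigma$ be a $2$-term cosilting complex with t-structure $({}^{\perp_{\le 0}}\sigma,{}^{\perp_{>0}}\sigma)$. Since $\sigma$ is $2$-term, the coaisle ${}^{\perp_{>0}}\sigma$ contains $\D^{\ge 0}$ and is contained in $\D^{\ge -1}$, and similarly for the aisle. I would verify this by truncation using the injectivity of $I_0,I_1$.

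The t-structure is therefore intermediate, and by the standard HRS correspondence it is determined by the torsion pair
\[(\mathcal{T},\mathcal{F})=\bigl(\Mod(A)\cap{}^{\perp_{\le 0}}\sigma[-1],\;\Mod(A)\cap{}^{\perp_{>0}}\sigma\bigr),\]
possibly after a shift convention. This torsion pair is the image of the t-structure under $\HH^0$.

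By the computation above, $\mathcal{F}=\mathcal{C}_\sigma$. Using $\Hom_{\D}(\sigma,\sigma[1])=0$ together with products, we get $\Prod(\Ker\sigma)\subseteq\mathcal{C}_\sigma$. Since $\mathcal{C}_\sigma$ is closed under submodules (by injectivity of $I_0$), $\Cogen(\Ker\sigma)\subseteq\mathcal{C}_\sigma$.

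\textbf{Step 2 (the main obstacle).} We must show $\mathcal{C}_\sigma\subseteq\Cogen(\Ker\sigma)$, using the generation condition (b).

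I would argue as follows. Let $M\in\mathcal{C}_\sigma$ and choose a map $M\to\prod\sigma$ through which all maps $M\to\sigma$ factor. Complete it to a triangle. Because $\Hom(M,\sigma[1])=0$ and $\sigma$ has no self-extensions in positive degrees, the cone lies in ${}^{\perp_{>0}}\sigma$.

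If $\Ker(M\to\prod\Ker\sigma)=K\neq 0$, then $\Hom(K,\sigma)=0$ and $\Hom(K,\sigma[1])=0$. This would give $\Hom(K,\sigma[i])=0$ for all $i$. By (b), $\sigma$ cogenerates $\K^b(\Inj A)$, which contains $DA$, so we get $\Hom(K,DA)=0$ and hence $K=0$.

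This proves $\Cogen(\Ker\sigma)=\mathcal{C}_\sigma$, so $\Ker\sigma$ is cosilting. It also proves that the t-structure is sent to $({}^{\perp_0}\Ker\sigma,\Cogen(\Ker\sigma))$, since the torsion class is the left orthogonal of the torsion-free class. This establishes (2).

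\textbf{Step 3.} Conversely, let $C$ be cosilting with copresentation $\sigma$. We take $\sigma$ minimal and add the injective summand $I\to 0$ accounting for $\Hom(\mathcal{T},-)$. This produces a $2$-term complex.

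Rigidity follows from $C^I\in\mathcal{C}_\sigma$, via the computation above applied to the cohomologies of $\sigma$. Generation follows by reversing the argument of Step 2: the t-structure has coaisle determined by $\Cogen(C)$, and this forces ${}^{\perp_{\mathbb{Z}}}\sigma=0$.

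\textbf{Step 4 (bijection on equivalence classes).} Equivalent complexes give the same t-structure, hence by (2) the same torsion pair, hence equivalent modules. Conversely, the t-structure is recovered from the torsion pair by HRS tilting, which gives injectivity.

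For surjectivity, use Step 3. The main care needed there is the convention on which summands are added so that $\HH^0(\sigma)=C$.
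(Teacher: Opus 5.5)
The paper gives no proof of this proposition; it is recalled as background from the cosilting literature, so there is no internal argument to compare with. Your route is the standard one: identify $\mathcal{C}_\sigma$ with $\Mod(A)\cap{}^{\perp_{>0}}\sigma$, prove $\Cogen(\Ker\sigma)=\mathcal{C}_\sigma$, and use that intermediate t-structures are HRS tilts of their induced torsion pairs. Steps 1, 2 and 4 can be made to work.

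\textbf{Step 3 is the genuine gap.} It is the converse direction that gives surjectivity in (1).

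First, you should not alter the copresentation at all. If $C$ is cosilting, the definition already gives you $\sigma\colon I_0\to I_1$ with $\Ker\sigma=C$ and $\mathcal{C}_\sigma=\Cogen(C)$.
\begin{itemize}
\item Passing to the minimal copresentation can destroy this equality. For $A=k\times k$ and $C=S_1$, the minimal copresentation $S_1\to 0$ has $\mathcal{C}_\sigma=\Mod(A)$, whereas $S_1\xrightarrow{0}S_2$ is the correct one.
\item Adding a summand $I\to 0$ puts $I$ in degree $0$, so $\HH^0$ becomes $C\oplus I$. The extra injective of a support $\tau^-$-tilting pair sits in degree $1$.
\end{itemize}

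Second, and more seriously, generation is only asserted. ``Reversing Step 2'' is not an argument. Step 2 used condition (b) as an input to kill $K$, whereas here (b) is the conclusion. You also cannot invoke ``the t-structure'' of $\sigma$, since its existence is part of what is being proved.

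What you can prove directly is ${}^{\perp_{\mathbb{Z}}}\sigma=0$. Suppose $\Hom_{\D(A)}(X,\sigma[i])=0$ for all $i$. The truncation triangles of $X$ give a surjection $\Hom(X,\sigma[-n])\twoheadrightarrow\Hom_A(\HH^n(X),C)$ and an injection $\Hom(\HH^n(X),\sigma[1])\hookrightarrow\Hom(X,\sigma[1-n])$. Hence $\HH^n(X)\in{}^{\perp_0}C\cap\Cogen(C)=0$ for every $n$.

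However, condition (b) of Definition 2.6 asks that the smallest triangulated, product- and summand-closed subcategory containing $\sigma$ be all of $\K^b(\Inj A)$. Deducing (b) from ${}^{\perp_{\mathbb{Z}}}\sigma=0$ together with $\Hom(\sigma^I,\sigma[i])=0$ for $i>0$ is a separate, nontrivial result (equivalence of the two standard definitions of cosilting complexes). You must cite it or prove it, for instance via a triangle $DA[-1]\to\sigma'\to\sigma''\to DA$ with $\sigma',\sigma''\in\Prod(\sigma)$, dual to the approximation triangle for $2$-term silting complexes. Your rigidity argument in Step 3 is fine.

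\textbf{Smaller repairs.}
\begin{itemize}
\item \emph{Indexing in Step 1.} It is off by one. With $\sigma$ in degrees $0,1$ one has $\D^{\geq 1}\subseteq{}^{\perp_{>0}}\sigma\subseteq\D^{\geq 0}$ and $\D^{\leq -1}\subseteq{}^{\perp_{\leq 0}}\sigma\subseteq\D^{\leq 0}$. Your inclusion $\D^{\geq 0}\subseteq{}^{\perp_{>0}}\sigma$ would force $\mathcal{F}=\Mod(A)$, contradicting $\mathcal{F}=\mathcal{C}_\sigma$. The torsion class is simply $\Mod(A)\cap{}^{\perp_{\leq 0}}\sigma={}^{\perp_0}\Ker\sigma$, with no shift.
\item \emph{Which inclusions are free.} Only $\D^{\geq 1}\subseteq{}^{\perp_{>0}}\sigma$ and $\D^{\leq -1}\subseteq{}^{\perp_{\leq 0}}\sigma$ follow from degree reasons and injectivity. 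The other two need the t-structure, e.g. ${}^{\perp_{>0}}\sigma=({}^{\perp_{\leq 0}}\sigma)^{\perp_0}\subseteq(\D^{\leq -1})^{\perp_0}=\D^{\geq 0}$.
\item \emph{Product rigidity.} The inclusion $\Prod(\Ker\sigma)\subseteq\mathcal{C}_\sigma$ needs $\Hom(\sigma^I,\sigma[1])=0$ for every set $I$, not just $\Hom(\sigma,\sigma[1])=0$, which is all Definition 2.6(a) literally gives. The class $\mathcal{C}_\sigma$ is closed under submodules, extensions and coproducts, but not a priori under products. Here product rigidity holds because $\sigma$ lies in the coaisle, which is product-closed as the right orthogonal $({}^{\perp_{\leq 0}}\sigma)^{\perp_0}$. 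Then use the triangle $\HH^0(\sigma^I)\to\sigma^I\to\HH^1(\sigma^I)[-1]\to$.
\item \emph{Submodule closure.} Closure of $\mathcal{C}_\sigma$ under submodules uses injectivity of $I_1$, not $I_0$.
\item \emph{Justifying $\Hom(K,\sigma)=0$ in Step 2.} Since $M/K$ embeds in $(\Ker\sigma)^I$, it lies in $\mathcal{C}_\sigma$ by Step 1. So $\Hom(M,\sigma)\to\Hom(K,\sigma)$ is surjective. Every map $M\to\sigma$ factors through $M\to\sigma^I$ and so vanishes on $K$; hence $\Hom(K,\sigma)=0$. With this, Step 2 is a correct, clean argument, and the remark about the cone is unnecessary.
\end{itemize}
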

	
Recall that a {\it cotilting module} (of projective dimension 1) in $\Mod(A)$ is a module $C$ such that $\Cogen(C)={}^{\bot_1}C$. The relation between cosilting modules and cotilting modules is given in the following proposition.
	
\begin{proposition}\label{2.8}
\begin{itemize}
\item[(1)] $($\cite{BP}$)$ A module $C$ is cotilting if and only if it is a cosilting module with respect to an epimorphic injective copresentation.
\item[(2)] $($\cite{ZW}$)$ The following conditions are equivalent for $C\in \Mod(A)$.
		\begin{itemize}
		\item[(a)] $C$ is a cosilting module.
		\item[(b)] $\Cogen(C)$ is a torsion-free class and $C$ is a cotilting module over $\bar{A}=A/\Ann{(C)}$, where $\Ann(C):=\{a\in A\mid aC=0\}$ is the annihilator of $C$.
		\end{itemize}
	\end{itemize}
\end{proposition}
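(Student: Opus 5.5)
Part (1) reduces to a long exact Ext sequence, and part (2)(a)$\Rightarrow$(b) reduces to (1) by restricting scalars along $A\to\bar A$. The converse (b)$\Rightarrow$(a) is the real obstacle.

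\emph{Part (1).} Let $0\to C\to I_0\overset{\sigma}{\to} I_1\to 0$ be an epimorphic injective copresentation. For any module $M$, apply $\Hom_A(M,-)$ to get
\[\Hom_A(M,I_0)\xrightarrow{\Hom(M,\sigma)}\Hom_A(M,I_1)\to\Ext^1_A(M,C)\to\Ext^1_A(M,I_0)=0.\]
Hence $\Hom(M,\sigma)$ is surjective exactly when $\Ext^1_A(M,C)=0$, so $\mathcal{C}_\sigma={}^{\bot_1}C$. The cosilting condition $\Cogen(C)=\mathcal{C}_\sigma$ is then literally the cotilting condition $\Cogen(C)={}^{\bot_1}C$.

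For the converse, assume $C$ is cotilting. Since $\Cogen(C)={}^{\bot_1}C$ is closed under submodules and contains all projectives, for any $M$ we take $0\to K\to P\to M\to 0$ with $P$ projective. Then $K\in{}^{\bot_1}C$, so $\Ext^2_A(M,C)\cong\Ext^1_A(K,C)=0$. Thus $\operatorname{inj.dim}C\le 1$, and a minimal injective copresentation of $C$ is epimorphic. The computation above then shows $C$ is cosilting with respect to it.

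\emph{Part (2), (a)$\Rightarrow$(b).} Let $C$ be cosilting with respect to $\sigma:I_0\to I_1$. First, $\Cogen(C)=\mathcal{C}_\sigma$ is closed under submodules (by definition of $\Cogen$), and $\mathcal{C}_\sigma$ is closed under products and extensions. For extensions I use the snake lemma on the two rows $\Hom(-,I_0)\to\Hom(-,I_1)$, which are exact since the $I_j$ are injective. So $\Cogen(C)$ is a torsion-free class.

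Next I pass to $\bar A=A/\Ann(C)$. Apply $\Hom_A(\bar A,-)$ to get $\bar\sigma:\bar I_0\to\bar I_1$ with $\bar I_j=\Hom_A(\bar A,I_j)$. These are injective $\bar A$-modules, and $\Ker\bar\sigma=C$ because $C$ is an $\bar A$-module. By adjunction, $\Hom_{\bar A}(M,\bar\sigma)=\Hom_A(M,\sigma)$ for every $\bar A$-module $M$. Hence $\mathcal{C}_{\bar\sigma}=\mathcal{C}_\sigma\cap\Mod\bar A$. Every module cogenerated by $C$ is annihilated by $\Ann(C)$, so this intersection is $\Cogen_{\bar A}(C)$, and $C$ is cosilting over $\bar A$ with respect to $\bar\sigma$.

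Finally, $\bar A$ embeds into a product of copies of $C$, so $\bar A\in\Cogen(C)=\mathcal{C}_{\bar\sigma}$. Therefore $\Hom_{\bar A}(\bar A,\bar\sigma)=\bar\sigma$ is surjective, and part (1) over $\bar A$ shows that $C$ is cotilting over $\bar A$.

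\emph{Part (2), (b)$\Rightarrow$(a).} Here I expect the main difficulty: we must build an $A$-injective copresentation $\sigma$ with $\mathcal{C}_\sigma=\Cogen(C)$.

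I would take $\sigma=\sigma_0\oplus(0\to I)$. Here $\sigma_0$ is a minimal $A$-injective copresentation of $C$, and $I$ is the product of all indecomposable injective $A$-modules $E$ with $\Hom_A(C,E)=0$. Then
\[\mathcal{C}_\sigma=\mathcal{C}_{\sigma_0}\cap{}^{\bot_0}I.\]

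The inclusion $\Cogen(C)\subseteq\mathcal{C}_\sigma$ should follow by comparing $\sigma_0$ with the epimorphic $\bar A$-copresentation of $C$, using that $\Ext^1_{\bar A}(-,C)$ vanishes on $\Cogen(C)$, together with ${}^{\bot_0}I\supseteq\Cogen(C)$.

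For the reverse inclusion, take $M\in\mathcal{C}_\sigma$ and its canonical sequence $0\to tM\to M\to fM\to 0$ for the torsion pair with torsion-free class $\Cogen(C)$. I would show $tM=0$. The point to check is that a nonzero torsion module has a nonzero map to $C$-orthogonal injectives or obstructs surjectivity of $\Hom(-,\sigma_0)$. This is exactly the step I would check most carefully, possibly following the strategy of \cite{ZW}.
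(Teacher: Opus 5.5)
Your part (1) and the implication (a)$\Rightarrow$(b) are correct. The paper gives no proof of its own here; it only cites [BP] and [ZW]. One small repair: closure under products should be taken from $\Cogen(C)$, not from $\mathcal{C}_\sigma$, since $\Hom_A(-,I_j)$ does not turn products into products.

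The genuine gap is (b)$\Rightarrow$(a). Your candidate $\sigma=\sigma_0\oplus(0\to I)$ is the right one, but neither inclusion is proved. Moreover, your hint for $\Cogen(C)\subseteq\mathcal{C}_{\sigma_0}$ cannot work as stated, because it never uses that $\Cogen(C)$ is closed under extensions in $\Mod(A)$. Take $A=k[x]/(x^2)$ and $C=k$. Then $C$ is cotilting over $\bar A=k$, $\Ext^1_{\bar A}(k,k)=0$ and $I=0$. Yet $\sigma_0$ is multiplication by $x$ on $A$, so $\Hom_A(k,\sigma_0)$ is the zero map into $\Hom_A(k,A)\cong k$, and $k\notin\mathcal{C}_{\sigma_0}$. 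In general $\Hom_A(\bar A,\sigma_0)$ need not be the epimorphic $\bar A$-copresentation of $C$. The torsion-free hypothesis is exactly what makes it so:
\begin{itemize}
\item An $A$-extension of $\bar A$ by $C$ lies in $\Cogen(C)\subseteq\Mod(\bar A)$, hence splits, so $\Ext^1_A(\bar A,C)=0$.
\item Therefore $\Hom_A(\bar A,\Imm\sigma_0)=\Hom_A(\bar A,I_0)/C$, which is $\bar A$-injective since $\operatorname{inj.dim}_{\bar A}C\le 1$.
\item It is then a summand of $\Hom_A(\bar A,I_1)$. Its complement consists of elements killed by $\Ann(C)$, so it meets the essential submodule $\Imm\sigma_0\subseteq I_1$ trivially, and hence vanishes.
\end{itemize}
Thus $\Hom_A(\bar A,\sigma_0)$ is onto. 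For $\bar A$-modules $M$, your adjunction then gives $M\in\mathcal{C}_{\sigma_0}\iff\Ext^1_{\bar A}(M,C)=0\iff M\in\Cogen(C)$.

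The reverse inclusion $\mathcal{C}_\sigma\subseteq\Cogen(C)$ is the real content; it is the analogue of the count $|M|+|I|=|A|$. Your sketch only restates it as ``the point to check'' and never says how the choice of $I$ or the faithfulness of $C$ over $\bar A$ enters. Here is one way to supply it.
\begin{itemize}
\item $\mathcal{C}_\sigma$ is closed under submodules: extend a map $M'\to I_1\oplus I$ to $M$, lift it through $\sigma$, and restrict. It is also closed under finite sums. Hence $tM\in\mathcal{C}_\sigma$ for every $M\in\mathcal{C}_\sigma$.
\item Let $\mathfrak b:=\Ann(\mathcal{C}_\sigma)$, so $\mathfrak b\subseteq\Ann(C)$ because $C\in\mathcal{C}_\sigma$ by the first inclusion. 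Since $A$ is finite dimensional, $\mathfrak b$ is the annihilator of finitely many elements of modules in $\mathcal{C}_\sigma$. So $B:=A/\mathfrak b$ embeds in a finite sum of such modules and lies in $\mathcal{C}_\sigma$.
\item Consequently $\Hom_A(B,\sigma_0)$ is an epimorphic injective $B$-copresentation of $C$, and $\Ext^1_B(N,C)=0$ for all $N\in\mathcal{C}_\sigma\subseteq\Mod(B)$.
\item Since $\Hom_A(B,I)=0$, every simple $B$-module $S$ satisfies $\Hom_A(C,E(S))\neq 0$; otherwise $E(S)$ would be a factor of $I$. So $S$ lies in the image of a map out of $C$ and is killed by $\Ann(C)$. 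Thus $\Ann(C)/\mathfrak b\subseteq\Rad B$, which is nilpotent.
\item Now let $N\in\mathcal{C}_\sigma\cap{}^{\perp_0}C$. Apply $\Hom_B(N,-)$ to a cotilting sequence $0\to C_1\to C_0\to D(\bar A)\to 0$ with $C_i\in\Prod(C)$. This gives $\Hom_A(N,D(\bar A))=0$, i.e.\ $N=\Ann(C)N$, whence $N=\Ann(C)^kN\subseteq\mathfrak bN=0$.
\end{itemize}
So $tM=0$ and $M\in\Cogen(C)$ for every $M\in\mathcal{C}_\sigma$.
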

	
\begin{theorem}$($\cite{ZW}$)$\label{2.9}
There is a bijection between the equivalence classes of cosilting $A$-modules and torsion pairs of finite type in $\Mod(A)$. This bijection sends a cosilting module $C$ to the torsion pair $(^{\bot_0}C,\Cogen(C))$.
\end{theorem}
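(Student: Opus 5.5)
The plan is to show that the assignment $C\mapsto({}^{\bot_0}C,\Cogen(C))$ is well defined on equivalence classes, that its image consists of torsion pairs of finite type, and that it is injective and surjective. Well-definedness on classes is immediate, since equivalent cosilting modules cogenerate the same torsion pair by definition.

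\textbf{Step 1: the image is a torsion pair of finite type.} Fix a cosilting module $C$ with injective copresentation $0\to C\to I_0\overset{\sigma}{\to}I_1$ satisfying $\Cogen(C)=\mathcal{C}_\sigma$. Then $\mathcal{C}_\sigma$ is closed under submodules and products, because each $I_j$ is injective and $\Hom_A(-,\sigma)$ turns products into products. It is closed under extensions by the snake lemma applied to $\Hom_A(-,I_0)\to\Hom_A(-,I_1)$ on a short exact sequence. Hence $\Cogen(C)$ is a torsion-free class, with torsion class ${}^{\bot_0}C$.

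For finite type, note that $A$ is noetherian, so $\Hom_A(\bigoplus M_i,\sigma)=\prod\Hom_A(M_i,\sigma)$ and $\mathcal{C}_\sigma$ is closed under coproducts. A direct limit is a pure epimorphic image of a coproduct. It therefore suffices to show that $\mathcal{C}_\sigma$ is closed under pure quotients. For this I would use that $M\in\mathcal{C}_\sigma$ if and only if $\Hom_{\D(A)}(M,\sigma[1])=0$, with $\sigma$ viewed as a $2$-term complex. I would then use that $\sigma$ is pure-injective in the appropriate sense, namely that $C$ and the injectives are pure-injective (Breaz--Pop). Applying $\Hom(-,\sigma[1])$ to a pure exact sequence then keeps the relevant maps surjective.

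\textbf{Step 2: injectivity.} Suppose $\Cogen(C)=\Cogen(C')$. I would identify $\Prod(C)$ intrinsically from $\mathcal{F}=\Cogen(C)$ as the class of $\Ext$-injective objects of $\mathcal{F}$ that, moreover, cogenerate $\mathcal{F}$ compatibly with $\sigma$. Concretely, from $\mathcal{C}_\sigma=\mathcal{F}$ I would show $\Prod(C)=\mathcal{F}\cap\{M : \Hom_{\D(A)}(\mathcal{F},M[1])=0\}$, with $\Hom$ computed via $\sigma$. Since the right-hand side depends only on $\mathcal{F}$, this gives $\Prod(C)=\Prod(C')$.

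\textbf{Step 3: surjectivity.} Let $(\mathcal{T},\mathcal{F})$ be of finite type. By \cite{El}, $\mathcal{F}$ is covering, so I take an $\mathcal{F}$-cover $f:C_0\to DA$ with kernel $C_1$, exactly as in the preceding discussion, and set $C=C_0\oplus C_1$. Wakamatsu's lemma gives $\Ext^1_A(\mathcal{F},C_1)=0$. Every module in $\mathcal{F}$ embeds into a product of copies of $C_0$, since $DA$ is an injective cogenerator and $f$ is a precover, so $\Cogen(C)=\mathcal{F}$.

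The main obstacle is producing an injective copresentation $\sigma$ with $\mathcal{C}_\sigma=\mathcal{F}$, not merely $\mathcal{F}\subseteq{}^{\bot_1}C$. I would try $\sigma:=$ the map $I_0\to I_1$ obtained by splicing the injective envelope of $C_0$ with the cokernel data of $0\to C_1\to C_0\to DA$. Concretely, I would use the copresentation $C_1\to C_0\to DA$ together with the envelope of $C_1$ to build $I_0=E(C_1)\oplus E(C_0)$ and $I_1$ containing $DA$. I would then check the two inclusions directly. The inclusion $\mathcal{F}\subseteq\mathcal{C}_\sigma$ should follow from the cover property of $f$ together with $\Ext^1(\mathcal{F},C_1)=0$. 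The reverse inclusion should follow because a morphism $M\to DA$ that lifts through $\sigma$ forces $M$ to embed into a product of copies of $C_0$.
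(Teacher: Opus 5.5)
Your proposal has a genuine gap in Step~3, and it sits exactly at the part the paper does not prove itself. Step~3 begins with the construction the paper recalls just before the statement: an $\mathcal{F}$-cover $f\colon C_0\to DA$, $C_1=\Ker f$, $C=C_0\oplus C_1$. The paper then quotes \cite{ZW} for the fact that this $C$ is cosilting. What you actually establish is $C\in\mathcal{F}$, $\Cogen(C)=\mathcal{F}$ and $\Ext^1_A(\mathcal{F},C_1)=0$. You also need $\Ext^1_A(\mathcal{F},C_0)=0$; Wakamatsu's lemma does not give this, but it follows from injectivity of $DA$ and minimality of $f$.

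Being cosilting requires an injective copresentation $0\to C\to I_0\overset{\sigma}{\to}I_1$ with $\mathcal{C}_\sigma=\mathcal{F}$, and your $\sigma$ is never actually defined. The natural readings of your sketch fail. If $\sigma$ has a component $E(C_0)\to DA$ extending $f$, then $\sigma$ no longer vanishes on $C_0$, since its kernel meets $C_0$ only in $C_1$. If that component is zero, then $\mathcal{C}_\sigma\subseteq{}^{\perp_0}DA=0$.

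In fact there is no room to encode $f$ in $I_1$ at all. Write $I_0=E(C)\oplus J_0$; then $\sigma|_{J_0}$ is a split monomorphism. After a change of coordinates, and splitting off the envelope of $\Imm\sigma$ inside $I_1$, every $\sigma$ with $\Ker\sigma=C$ is isomorphic to $[\sigma_C^{\min},0]^T\colon E(C)\to E(E(C)/C)\oplus J$ plus a contractible summand $J_0\overset{\cong}{\to}J_0$. Here $\sigma_C^{\min}$ is the minimal injective copresentation of $C$. This is the infinite analogue of $I^0\to I^1\oplus I$ from the $\tau^-$-tilting discussion.

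For such $\sigma$, a module $M$ lies in $\mathcal{C}_\sigma$ if and only if:
(i) $\Ext^1_A(M,C)=0$;
(ii) $\Hom_A(M,J)=0$;
(iii) every morphism $M\to E(E(C)/C)$ has image inside $E(C)/C$.
With $Z:=E(C)/C$, condition (iii) says that the connecting map $\Hom_A(M,E(Z)/Z)\to\Ext^1_A(M,Z)\cong\Ext^2_A(M,C)$ is injective.

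Your argument only touches (i).
\begin{itemize}
\item The inclusion $\mathcal{F}\subseteq\mathcal{C}_\sigma$ needs (iii) for every $M\in\mathcal{F}$. This is an $\Ext^2$-type condition, and neither the cover property of $f$ nor $\Ext^1$-vanishing controls it directly.
\item Since $\mathcal{C}_\sigma$ is closed under submodules, the inclusion $\mathcal{C}_\sigma\subseteq\mathcal{F}$ amounts to showing that no nonzero $T\in\mathcal{T}$ satisfies (i)--(iii). Your remark about lifting maps into $DA$ is not available once $\sigma$ has this shape.
\end{itemize}

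Closing this is the real content of \cite{ZW}. Two possible routes:
\begin{itemize}
\item Pass to $\bar A=A/\Ann(C)$; note $\bar A\hookrightarrow C^n$ for some $n$, as $A$ is finite dimensional. Prove $\mathcal{F}=\{M\in\Mod(\bar A)\mid\Ext^1_{\bar A}(M,C)=0\}$, so that $C$ is cotilting over $\bar A$, and invoke Proposition~\ref{2.8}(2).
\item Show that the HRS heart $\mathcal{H}_t$ is Grothendieck (this is where finite type enters), and realise an injective cogenerator as a $2$-term cosilting complex, as in the results collected in Proposition~\ref{2.19}.
\end{itemize}

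Two smaller slips:
\begin{itemize}
\item $\Hom_A(-,\sigma)$ turns coproducts, not products, into products. This is harmless, since $\Cogen(C)$ is product-closed by definition, and noetherianity plays no role there.
\item In Step~2 the inclusion $\mathcal{F}\cap\mathcal{F}^{\perp_1}\subseteq\Prod(C)$ still needs proof. For example, show that the cokernel of the $\Prod(C)$-preenvelope $X\hookrightarrow C^{\Hom_A(X,C)}$ lies in $\mathcal{C}_\sigma$, so that the sequence splits.
\end{itemize}
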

	
Therefore, just as support $\tau^-$-tilting modules and $2$-term cosilting complexes in $\K^b(\rm inj A)$ parametrize functorially finite torsion pairs, cosilting modules and $2$-term cosilting complexes in $\K^b(\Inj A)$ parametrize all torsion pairs in $\modd(A)$. For support $\tau^-$-tilting modules and $2$-term cosilting complexes in $\K^b(\rm inj A)$ we have Krull-Schmidt property, i.e. every such an object has a decomposition to a finite direct (co)product of indecomposable objects. Fortunately, cosilting modules and $2$-term cosilting complexes have a similar property.
The important thing is that cosilting modules and cosilting complexes are pure-injective \cite{MV} as cotilting modules are \cite{B}.
For more information on this, we refer the reader to \cite{ALS1}.

\begin{definition}$($\cite{ALS1}$)$\label{2.10}
\begin{itemize}
\item[(1)]
A set $\mathcal{N}$ of indecomposable pure-injective $2$-term complexes of injective $A$-modules is called {\it rigid} if for every $\mu,\nu\in\mathcal{N}$, $\Hom_{\D(A)}(\mu,\nu[1])=0$. Furtheremor, if it is maximal among all rigid sets, it is called {\it maximal rigid}.
\item[(2)]
A {\it cosilting pair} is a pair $(\mathcal{Z},\mathcal{I})$ given by a set $\mathcal{Z}$ of indecomposable pure-injective $A$-modules and a set $\mathcal{I}$ of indecomposable injectives such that
\begin{itemize}
\item[(i)]
For all $X,Y\in\mathcal{Z}$, all submodules of $X$ are contained in ${}^{\bot_1}Y$.
\item[(ii)]
$\Hom_A(X,I)=0$ for all $X\in\mathcal{Z}$ and $I\in\mathcal{I}$.
\item[(iii)]
Any pair $(\mathcal{Z},\mathcal{I})$ with $(i)$ and $(ii)$ such that $\mathcal{Z}\subseteq\mathcal{Z'}$ and $\mathcal{I}\subseteq\mathcal{I'}$ satisfies  $(\mathcal{Z},\mathcal{I})= (\mathcal{Z}',\mathcal{I}')$.
\end{itemize}
\end{itemize}
\end{definition}

\begin{proposition}$($\cite{ALS1}$)$\label{2.11}
There is a bijection between equivalence classes of $2$-term cosilting complexes and maximal rigid sets, sending a $2$-term cosilting complex $\sigma$ to the set of indecomposable objects in $\Prod(\sigma)$. The inverse map sends a maximal rigid set $\mathcal{N}$ to $\sigma_{\mathcal{N}}:=\prod_{\mu\in\mathcal{N}}\mu$.
\end{proposition}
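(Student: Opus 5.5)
The plan is to move the problem into the heart of the t-structure cogenerated by $\sigma$. There, indecomposable summands become indecomposable injective objects, and the question reduces to a decomposition statement for injectives in a locally coherent Grothendieck category. I would prove four things in turn: that the map $\sigma\mapsto\mathcal{N}_\sigma$, the set of indecomposable objects of $\Prod(\sigma)$, lands in maximal rigid sets; that it is constant on equivalence classes; that $\Prod(\sigma)=\Prod(\sigma_{\mathcal{N}_\sigma})$; and that every maximal rigid set arises this way.

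\emph{Step 1 (well defined).} Equivalent cosilting complexes have the same $\Prod$, so $\mathcal{N}_\sigma$ depends only on the class of $\sigma$. Every object of $\Prod(\sigma)$ is pure-injective, since $\sigma$ is \cite{MV}, and $\Hom_{\D(A)}(\sigma',\sigma''[1])=0$ for $\sigma',\sigma''\in\Prod(\sigma)$ by condition (a) of Definition \ref{2.6}. Hence $\mathcal{N}_\sigma$ is rigid.

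For maximality, let $\mu$ be an indecomposable pure-injective $2$-term complex with $\mathcal{N}_\sigma\cup\{\mu\}$ rigid. Because $\sigma$ and $\mu$ are $2$-term complexes of injectives, $\Hom(\mu,\sigma[i])=0$ for $i\ge 2$. Given rigidity, this places $\mu$ in the coaisle ${}^{\perp_{>0}}\sigma$, provided we also know $\Hom(\mu,\sigma[1])=0$, which I postpone to Step 3. Every object $Y$ of the coaisle admits a triangle $Y\to\sigma_0\to\sigma_1\to Y[1]$ with $\sigma_i\in\Prod(\sigma)$; this is the usual $2$-term cosilting approximation. Combined with $\Hom(\sigma,\mu[1])=0$, it gives $\Hom(Y,\mu[1])=0$. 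So $\mu$ is Ext-injective in the coaisle, and the Ext-injectives of the coaisle are exactly $\Prod(\sigma)$. Therefore $\mu\in\mathcal{N}_\sigma$.

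\emph{Step 2 (decomposition).} By the facts recalled in the introduction, $\HH^0_\sigma$ gives an equivalence between $\Prod(\sigma)$ and $\Inj\mathcal{H}_\sigma$, and the heart $\mathcal{H}_\sigma$ is locally coherent Grothendieck. In a locally coherent Grothendieck category every injective is a direct summand of a product of indecomposable injectives (Krause, via the Ziegler spectrum). Transporting this back gives $\Prod(\sigma)=\Prod(\prod_{\mu\in\mathcal{N}_\sigma}\mu)$. So $\sigma$ is equivalent to $\sigma_{\mathcal{N}_\sigma}$, and the map on equivalence classes is injective.

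\emph{Step 3 (surjectivity and the main obstacle).} Let $\mathcal{N}$ be maximal rigid and put $\sigma_{\mathcal{N}}=\prod_{\mu\in\mathcal{N}}\mu$. The first difficulty is that rigidity of the set does not obviously give rigidity of the product, because $\Hom(\prod\mu,\nu[1])$ does not commute with products in the first variable. I would first try to show that for pure-injective $\nu$ the class $\{X \text{ $2$-term}\mid \Hom(X,\nu[1])=0\}$ is definable, in particular closed under products. On $\HH^0$ this amounts to the statement that ${}^{\perp_1}\nu$ is closed under products, using pure-injectivity of $\nu$ together with the submodule condition in Definition \ref{2.10}(2)(i). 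The same fact supplies the vanishing postponed in Step 1.

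Once $\sigma_{\mathcal{N}}$ is rigid, a Bongartz-type completion embeds it as a summand of a $2$-term cosilting complex $\sigma$: for instance, take the torsion pair of finite type cogenerated by $\HH^0(\sigma_{\mathcal{N}})$ and its $\mathcal{F}$-cover of $DA$, as in the construction from \cite{ZW} recalled before Definition \ref{2.6}. Then $\mathcal{N}\subseteq\mathcal{N}_\sigma$, and $\mathcal{N}_\sigma$ is rigid by Step 1, so maximality forces $\mathcal{N}=\mathcal{N}_\sigma$. By Step 2, $\sigma_{\mathcal{N}}$ is then equivalent to $\sigma$.

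I expect the product-closure of the $\Hom(-,\nu[1])$-vanishing class to be the real obstacle. The completion step is a secondary one: one must check that the completion lies in $\Prod$ of the original rigid object together with complements, rather than merely containing it.
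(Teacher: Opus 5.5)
There is a genuine gap, at exactly the step you flag as the obstacle, and Step 1 already relies on it without saying so. (The paper gives no proof of this statement; it quotes it from \cite{ALS1}, so I am judging your outline on its own.)

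\textbf{Step 2 is fine.} You do not even need the Ziegler spectrum: the argument of Proposition \ref{3.1}, via simple quotients of finitely generated subobjects, works in any locally finitely generated Grothendieck category. The equivalence $\Prod(\sigma)\simeq\Inj\mathcal{H}_\sigma$ then transports it.

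\textbf{Step 1.} You postpone $\Hom(\mu,\sigma[1])=0$, but this is immediate from Step 2: $\sigma$ is a summand of $\prod_{\nu\in\mathcal{N}_\sigma}\nu^{\alpha_\nu}$, and $\Hom(\mu,-)$ commutes with products. Meanwhile you invoke $\Hom(\sigma,\mu[1])=0$ with no justification. That is the hard statement: rigidity only gives $\Hom(\nu,\mu[1])=0$ for individual $\nu$, and $\sigma$ is only a summand of an infinite product of them.

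Also, not every object of the coaisle admits a triangle $Y\to\sigma_0\to\sigma_1\to Y[1]$ with $\sigma_i\in\Prod(\sigma)$, since $\mathcal{Y}_t$ contains complexes with cohomology in arbitrarily high degrees. What is true is $\Hom(Y,\mu[1])\cong\Hom(\HH^0(Y),\mu[1])$ for $Y\in\D^{\geq 0}$. So Ext-injectivity of $\mu$ in the coaisle amounts to $\mathcal{F}\subseteq\mathcal{C}_\mu:=\{M\in\Mod(A)\mid\Hom_{\D(A)}(M,\mu[1])=0\}$.

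\textbf{Step 3.} The same kind of inclusion is what you need for rigidity of $\sigma_{\mathcal{N}}$ and for $\mathcal{N}\subseteq\mathcal{N}_\sigma$ after completion. The members $0\to I$ of $\mathcal{N}$ additionally require $\Hom_A(\mathcal{F},I)=0$, which your ``torsion pair cogenerated by $\HH^0(\sigma_{\mathcal{N}})$'' does not address.

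\textbf{Why your suggested mechanism does not close the gap.} Definition \ref{2.10}(2)(i) is a property of cosilting pairs, not something available for a rigid set. Pure-injectivity alone does not make $\mathcal{C}_\nu$ closed under products: over $\mathbb{Z}$, $\nu=0\to\mathbb{Q}$ gives the class of torsion groups.

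\textbf{The missing idea} uses that $A$ is finite dimensional.
\begin{itemize}
\item $\mathcal{C}_\nu$ is closed under extensions, and under submodules because $\Hom(-,\nu[2])$ vanishes on modules.
\item It is closed under direct limits because $\HH^0(\nu)$ is pure-injective. Write $\nu\simeq\nu_N\oplus(0\to I)$ with $\nu_N$ the minimal injective copresentation of $N=\HH^0(\nu)$. Then $M\in\mathcal{C}_\nu$ iff all submodules of $M$ lie in ${}^{\bot_1}N$ and $\Hom_A(M,I)=0$, and ${}^{\bot_1}N$ is closed under direct limits.
\item Hence $\mathcal{C}_\nu=\varinjlim(\mathcal{C}_\nu\cap\modd(A))$, where $\mathcal{C}_\nu\cap\modd(A)$ is a torsion-free class of the length category $\modd(A)$.
\item By Theorem \ref{2.5}, $\mathcal{C}_\nu$ is a torsion-free class of finite type, in particular closed under products.
\end{itemize}

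\textbf{The completion.} Take $\sigma$ to be the cosilting complex of the finite-type torsion-free class $\mathcal{F}:=\bigcap_{\nu\in\mathcal{N}}\mathcal{C}_\nu$. Rigidity gives $\HH^0(\mu)\in\mathcal{F}$ for every $\mu\in\mathcal{N}$. The inclusion $\mathcal{F}\subseteq\mathcal{C}_\mu$ makes each $\mu$, including each $0\to I$, Ext-injective in the coaisle, hence in $\Prod(\sigma)$. Your maximality argument and Step 2 then finish the proof.

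\textbf{Step 1 without products.} You can avoid products altogether there. A finite-dimensional $F\in\mathcal{F}$ embeds into a finite direct sum of modules $\HH^0(\nu)$ with $\nu\in\mathcal{N}_\sigma$, so $F\in\mathcal{C}_\mu$. Closure of $\mathcal{C}_\mu$ under direct limits then gives $\mathcal{F}\subseteq\mathcal{C}_\mu$.
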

Again, the above result about the derived category has a shadow in $\Mod(A)$. Indeed, applying the homology functor $\HH^0$ to the above proposition we get:
\begin{proposition}$($\cite{ALS1}$)$\label{2.12}
There is a bijection between equivalence classes of cosilting $A$-modules and cosilting pairs, sending a cosilting module $C$ to $(\mathcal{Z}_C,\mathcal{I}_C)$ where $\mathcal{Z}_C$ is the set of indecomposable modules in $\Prod(C)$ and $\mathcal{I}_C$ is the set of indecomposable injective modules in $C^{\bot_0}$. The inverse map sends a cosilting pair $(\mathcal{Z},\mathcal{I})$ to $\prod_{X\in\mathcal{Z}}X$.
	\end{proposition}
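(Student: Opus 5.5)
The plan is to obtain Proposition \ref{2.12} as the image of Proposition \ref{2.11} under $\HH^0$, using Proposition \ref{2.7} to pass between $2$-term cosilting complexes and cosilting modules. Given a cosilting module $C$, I choose a $2$-term cosilting complex $\sigma\colon I_0\to I_1$ with $\HH^0(\sigma)=C$; this is possible by Proposition \ref{2.7}, and I take the minimal injective copresentation. By Proposition \ref{2.11}, the indecomposables of $\Prod(\sigma)$ form a maximal rigid set $\mathcal{N}_\sigma$.

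The first key step is to split $\mathcal{N}_\sigma$ into two parts:
\begin{itemize}
\item complexes $\mu$ with $\HH^0(\mu)\neq 0$;
\item complexes $\mu$ with $\HH^0(\mu)=0$.
\end{itemize}
An indecomposable $2$-term complex $\mu\colon J_0\to J_1$ of injectives is either homotopic to a minimal injective copresentation of the indecomposable pure-injective module $\HH^0(\mu)$, or it is the stalk $J[-1]$ for an indecomposable injective $J$. Indeed, for minimal complexes the kernel determines the complex up to a split summand of the form $0\to J$.

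For the first part I show that $\HH^0$ restricts to a bijection between the indecomposables of $\Prod(\sigma)$ with nonzero cohomology and the indecomposables of $\Prod(C)$. This holds because $\HH^0$ commutes with products, so $\HH^0$ of a summand of a product of copies of $\sigma$ is a summand of $C^{I}$. Conversely, taking minimal copresentations of summands of $C^I$ gives summands of $\sigma^I$, since the minimal copresentation of $C^I$ is a summand of $\sigma^I$ (products of injectives are injective, and the kernel is preserved). Over a finite dimensional algebra pure-injective indecomposables have local endomorphism rings, so these complexes are indecomposable.

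For the second part, $J[-1]\in\Prod(\sigma)$ is equivalent to rigidity against $\sigma$ together with maximality. The computation $\Hom_{\D(A)}(\mu,J[-1][1])=\Hom(\HH^0(\mu),J)$ shows that $J[-1]$ is rigid with the others exactly when $\Hom_A(C,J)=0$. So $\mathcal{I}_C$, the indecomposable injectives in $C^{\bot_0}$, corresponds to the stalk part.

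Next I translate conditions (i) and (ii) of Definition \ref{2.10}(2). Condition (ii) is the computation just given. Condition (i) should be equivalent to $\Hom_{\D(A)}(\mu_X,\mu_Y[1])=0$ for copresentations $\mu_X,\mu_Y$. For a $2$-term complex $\mu_Y$, the group $\Hom(\mu_X,\mu_Y[1])$ vanishes iff every map $X\to J_1^Y$ composed appropriately lifts. This is the standard statement that $\Hom_{\D}(\mu_X,\mu_Y[1])=0$ iff $\Hom_A(M,\mu_Y)$ is surjective for $M=X$. By \cite{MV}/\cite{ZW}-type arguments that class $\mathcal{C}_{\mu_Y}$ is closed under submodules and contained in ${}^{\bot_1}Y$, and this yields the ``all submodules'' formulation. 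Maximality (iii) then corresponds to maximal rigidity.

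Finally I check the inverse. Given a cosilting pair, $\prod_{X\in\mathcal{Z}}X$ is $\HH^0$ of $\prod\mu_X\times\prod J[-1]$, which is $\sigma_{\mathcal{N}}$. This is cosilting by Proposition \ref{2.11}, so its $\HH^0$ is a cosilting module by Proposition \ref{2.7}. Equivalence classes match because $\Prod(C)$ determines $\mathcal{Z}_C$ and vice versa.

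The main obstacle is the equivalence between condition (i) (submodules in ${}^{\bot_1}Y$) and the derived rigidity $\Hom(\mu_X,\mu_Y[1])=0$. Here I would follow \cite{ALS1} directly: show that rigidity gives $X\in\mathcal{C}_{\mu_Y}$, that $\mathcal{C}_{\mu_Y}$ is closed under submodules and contained in ${}^{\bot_1}Y$, and, conversely, that Ext-vanishing for all submodules forces surjectivity of $\Hom(X,\mu_Y)$, by factoring through images in $J_0^Y$.
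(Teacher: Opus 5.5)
Your proposal takes the same route as the paper. The paper states Proposition~\ref{2.12} with a citation to \cite{ALS1} and obtains it by applying $\HH^0$ to Proposition~\ref{2.11}: a maximal rigid set splits into minimal copresentations $\mu_X$ ($X\in\mathcal{Z}_C$) and stalks $J[-1]$ ($J\in\mathcal{I}_C$). One slip: $\sigma$ cannot in general be taken to be the minimal copresentation of $C$, because of exactly these stalk summands (e.g.\ $C=0$, where $\sigma=DA[-1]$).

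For the step you flag, the mechanism is essentiality, not factoring through $J_0^Y$. Write $\mu_Y$ as $s\colon J_0^Y\to J_1^Y$ and take $g\colon X\to J_1^Y$. On the submodule $X'=g^{-1}(\Imm s)$, lift $g|_{X'}\colon X'\to\Imm s\cong J_0^Y/Y$ to $J_0^Y$ using $\Ext^1_A(X',Y)=0$, and extend the lift to $h\colon X\to J_0^Y$. Then $(g-sh)(X)\cap\Imm s=0$, so $g=sh$ because $\Imm s$ is essential in $J_1^Y$. This is exactly where minimality of $\mu_Y$ is used: adding a summand $0\to J$ to $\mu_Y$ leaves condition (i) unchanged but can destroy rigidity.
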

\begin{remark}\label{2.13}
In summary, we have bijections between
\begin{itemize}
\item torsion pairs in $\modd(A)$,
\item torsion pairs of finite type in $\Mod(A)$,
\item cosilting $A$-modules,
\item cosilting pairs,
\item $2$-term cosilting complexes, and
\item maximal rigid sets.
\end{itemize}
\end{remark}
	
Our aim is to study the lattice of torsion pairs $\textbf{tors}(A)$. Having the bijections in Remark \ref{2.13}, we can study the lattice of torsion pairs using other concepts.
	
\subsection{Happel-Reiten-Smalø tilting t-structures}
Any torsion pair $t=(\mathcal{T},\mathcal{F})$ in $\Mod(A)$ induces a t-structure in $\D(A)$, called Happel-Reiten-Smalø tilting t-structure (HRS-tilting for short) at $t$. If the torsion pair is of finite type, the associated t-structure has nice properties. In this subsection, after recalling the definition of t-structure, we recall the connection between HRS-tilting t-structures and $2$-term cosilting complexes. Also, we prove some well-known results specially about the cohomological functor associated to HRS-tilting t-structure, because the cohomological functor is essential in the next section.
	
\begin{definition}
A {\it $t$-structure} in triangulated category $\mathcal{D}$ is a pair of subcategories $(\mathcal{X},\mathcal{Y})$, satisfying the following conditions.
	\begin{itemize}
	\item[(1)] $\Hom_{\mathcal{D}}(X,Y)=0$, for every $X\in\mathcal{X}$ and every $Y\in\mathcal{Y}$.
	\item[(2)] For any object $M\in \mathcal{D}$, there exists a triangle $X_M\rightarrow M\rightarrow Y_M\rightarrow X_M[1]$, with $X_M\in \mathcal{X}$ and $Y_M\in \mathcal{Y}$.
	\item[(3)] $\mathcal{X}[1]\subseteq \mathcal{X}$ and $\mathcal{Y}\subseteq\mathcal{Y}[-1]$.
	\end{itemize}
The subcategory $\mathcal{H}=\mathcal{X}[-1]\cap\mathcal{Y}$ is then an abelian category called the {\it heart} of t-structure $(\mathcal{X},\mathcal{Y})$, and short exact sequences in $\mathcal{H}$ are given by triangles in $\mathcal{D}$ with terms in $\mathcal{H}$. Also, there is a canonical functor $\HH^0:\mathcal{D}\rightarrow \mathcal{H}$ called the {\it cohomological functor} \cite{BBD}.
	\end{definition}
\begin{remark}\label{2.15}
\begin{itemize}
\item[(1)]
It follows from the definition of t-structure that the assignment $M\mapsto X_M$ is functorial. Actually, this defines a functor $\tau^{< 0}:\mathcal{D}\rightarrow \mathcal{H}$, called {\it left truncation}, which is a right adjoint of the inclusion $\mathcal{X}\hookrightarrow \mathcal{D}$.
\item[(2)]
Similarly, the assignment $M\mapsto Y_M$ is functorial and defines a functor $\tau^{\geq 0}:\mathcal{D}\rightarrow \mathcal{H}$, called {\it right truncation}, which is a left adjoint of the inclusion $\mathcal{Y}\hookrightarrow \mathcal{D}$.
\item[(3)]
Putting $\tau^{< n}:=\tau^{< 0}[-n]$ and $\tau^{\geq n}:=\tau^{\geq 0}[-n]$, the cohomological fuctor is defined as $\HH^0=\tau^{< 1}\tau^{\geq 0}\simeq\tau^{\geq 0}\tau^{< 1}$.
\end{itemize}
\end{remark}
For later references, we recall the construction of kernel and cokernel in the heart.

\begin{proposition}\label{2.16}
Let $(\mathcal{X},\mathcal{Y})$ be a t-structure in  triangulated category $\mathcal{D}$ with the heart $\mathcal{H}$ and $a:A\rightarrow B$ be a morphism in $\mathcal{H}$. Extend $a$ to a triangle $A\overset{a}{\rightarrow}B\overset{b}{\rightarrow}C\overset{c}{\rightarrow}A[1]$, and then choose a triangle $X_C\overset{f}{\rightarrow}C\overset{g}{\rightarrow}Y_C\overset{h}{\rightarrow}X[1]$ with $X_C\in\mathcal{X}$ and $Y_C\in\mathcal{Y}$. Then, $\pi:=gb:B\rightarrow Y_C$ is the cokernel of $a$ and $i:=(c f)[-1]$ is the kernel of $a$. The situation is depicted in the following diagram.
\[\begin{tikzcd}
	{X_C[-1]} &&& {X_C} \\
	{C[-1]} & A & B & C \\
	{Y_C[-1]} &&& {Y_C}
	\arrow["{f[-1]}"', from=1-1, to=2-1]
	\arrow["i", shift left, curve={height=-6pt}, squiggly, from=1-1, to=2-2]
	\arrow["f", from=1-4, to=2-4]
	\arrow["{c[-1]}", from=2-1, to=2-2]
	\arrow["{g[-1]}"', from=2-1, to=3-1]
	\arrow["a", from=2-2, to=2-3]
	\arrow["b", from=2-3, to=2-4]
	\arrow["\pi"', shift left, curve={height=-6pt}, squiggly, from=2-3, to=3-4]
	\arrow["g", from=2-4, to=3-4]
\end{tikzcd}\]
\end{proposition}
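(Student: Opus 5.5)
We need to prove Proposition 2.16: the construction of kernel and cokernel in the heart. The plan is to verify the universal properties directly, using orthogonality from the t-structure and the fact that $\mathcal{H}=\mathcal{X}[-1]\cap\mathcal{Y}$.

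We first locate $C$. It is an extension of $A[1]\in\mathcal{H}[1]$ by $B\in\mathcal{H}$, so $C\in\mathcal{X}[-1]\cap\mathcal{Y}[1]$, using that $\mathcal{X}$ and $\mathcal{Y}$ are closed under extensions. We then show that $X_C\in\mathcal{H}[1]$ and $Y_C\in\mathcal{H}$. Since $\mathcal{X}=\mathcal{X}[-1]$-related shifts behave by axiom (3), $X_C\in\mathcal{X}\subseteq\mathcal{X}[-1]$. To see $X_C\in\mathcal{Y}[1]$, we use that $\mathcal{Y}[1]={}$ the right orthogonal of $\mathcal{X}[1]$, which is the standard fact $\mathcal{Y}=\mathcal{X}^{\perp_0}$ coming from the truncation triangles. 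For $T\in\mathcal{X}[1]$, the triangle gives exactness of $\Hom(T,Y_C[-1])\to\Hom(T,X_C)\to\Hom(T,C)$. The outer terms vanish: the left one because $Y_C[-1]\in\mathcal{Y}[-1]\subseteq \mathcal{Y}$ and $T\in\mathcal{X}$, and the right one because $C\in\mathcal{Y}[1]$ and $T\in\mathcal{X}[1]$. Hence $X_C[-1]\in\mathcal{H}$.

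Symmetrically, $Y_C\in\mathcal{Y}$ holds by construction. To get $Y_C\in\mathcal{X}[-1]$, i.e.\ that it is left orthogonal to $\mathcal{Y}[-1]$, we use the exact sequence $\Hom(X_C[1],W)\to\Hom(Y_C,W)\to\Hom(C,W)$ for $W\in\mathcal{Y}[-1]$. Both outer terms vanish, since $C\in\mathcal{X}[-1]$ and $X_C[1]\in\mathcal{X}[1]\subseteq\mathcal{X}\subseteq\mathcal{X}[-1]$.

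Next we prove that $\pi$ is a cokernel. Note that $\pi a=gba=0$. Let $u:B\to W$ with $W\in\mathcal{H}$ and $ua=0$. Then $u$ factors as $u=vb$ with $v:C\to W$. The composite $vf:X_C\to W$ is zero because $X_C\in\mathcal{X}$ and $W\in\mathcal{Y}$, so $v=wg$ and therefore $u=w\pi$. For uniqueness, suppose $w\pi=0$. Then $wg$ vanishes on $b$, so $wg$ factors through $c:C\to A[1]$. Since $\Hom(A[1],W)$ pairs $\mathcal{X}\ni A[1]$ against $W\in\mathcal{Y}$, it is zero, hence $wg=0$. Then $w$ factors through $h:Y_C\to X_C[1]$, and $\Hom(X_C[1],W)=0$, so $w=0$.

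The kernel statement is the exact dual: $i$ factors through $c[-1]$, and the argument uses the vanishing $\Hom(W,C[-1])\to$ factorization through $f[-1]$ for $W\in\mathcal{H}$. The key vanishings there are $\Hom(W,Y_C[-1])=0$ and $\Hom(W,B[-1])=0$ for $W\in\mathcal{H}$.

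The main obstacle is getting the shift conventions right, in particular using $\mathcal{X}[-1]\cap\mathcal{Y}$ as the heart consistently. Once the memberships of $X_C[-1]$ and $Y_C$ in $\mathcal{H}$ are established, everything else is formal diagram chasing with $\Hom$-exactness of triangles.
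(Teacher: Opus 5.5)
Your proof is correct. The paper gives no proof of this proposition; it is recalled as the standard construction going back to Beilinson--Bernstein--Deligne. Your argument is essentially that standard one. For $W\in\mathcal{Y}$, your existence and uniqueness steps for $\pi$ together say two things: $g$ induces $\Hom(Y_C,W)\cong\Hom(C,W)$, which is the truncation adjunction coming from $\Hom(X_C,W)=0=\Hom(X_C[1],W)$; and $b$ identifies $\Hom(C,W)$ with $\{u\in\Hom(B,W)\mid ua=0\}$, because $\Hom(A[1],W)=0$.

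A few points to tighten.

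You use $\mathcal{Y}=\mathcal{X}^{\perp_0}$, $\mathcal{X}={}^{\perp_0}\mathcal{Y}$ and closure under extensions, but these are not among the axioms listed in the paper. They follow quickly: if $\Hom(\mathcal{X},M)=0$, the truncation triangle of $M$ splits, so $X_M[1]\in\mathcal{X}$ is a direct summand of $Y_M\in\mathcal{Y}$ and is therefore zero. You should still state this.

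Axiom (3) as printed in the paper, $\mathcal{Y}\subseteq\mathcal{Y}[-1]$, is a typo for $\mathcal{Y}[-1]\subseteq\mathcal{Y}$. That is the inclusion you correctly use.

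The sentence deriving $X_C\in\mathcal{X}\subseteq\mathcal{X}[-1]$ is garbled. Since $\mathcal{H}[1]=\mathcal{X}\cap\mathcal{Y}[1]$, all you need there is $X_C\in\mathcal{X}$, which holds by the choice of triangle.

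In the dual kernel argument, the uniqueness step needs a third vanishing that you did not list: $\Hom(W,Y_C[-2])=0$, the dual of $\Hom(X_C[1],W)=0$. It is what makes $f[-1]$ induce an injection $\Hom(W,X_C[-1])\to\Hom(W,C[-1])$. It holds because $W\in\mathcal{X}[-1]$ and $Y_C[-2]\in\mathcal{Y}[-1]$.
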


The standard t-structure $(\D^{< 0},\D^{\geq 0})$ on $\D(A)$ given by
\begin{align}
		&\D^{< 0}=\{X\in \D(A)\mid \HH^i(X)=0, \forall i\geq 0\} \notag \\
		&\D^{\geq 0}=\{X\in \D(A)\mid \HH^i(X)=0, \forall i< 0\}\notag
	\end{align}
recovers $\Mod(A)$ as its heart, up to equivalence. In this case the cohomological functor is the usual zeroth cohomology of complexes.
	
Now assume that we have a torsion pair $t=(\mathcal{T},\mathcal{F})$ in $\Mod(A)$. Happel, Reiten and Smalø \cite{HRS} showed that there exists a $t$-structure $(\mathcal{X}_t,\mathcal{Y}_t)$ in $\D(A)$ given by
	\begin{align}
		&\mathcal{X}_t=\{X\in \D(\Lambda)\mid\HH^0(X)\in \mathcal{T} \space\text{and}\space \HH^i(X)=0, \forall i\geq 1\}, \notag \\
		&\mathcal{Y}_t=\{X\in \D(\Lambda)\mid\HH^0(X)\in \mathcal{F} \space\text{and}\space \HH^i(X)=0, \forall i\leq -1\}.\notag
	\end{align}
	$(\mathcal{X}_t,\mathcal{Y}_t)$ is called {\it HRS-tilt at the torsion pair} $t$. We denote the heart of this $t$-structure by $\mathcal{H}_t$. We can think of $\mathcal{H}_t$ as an abelian category close to $\Mod(A)$. Because $\mathcal{H}_t$ is an important tool for studying properties of the torsion pair $t$, in the following we collect some basic properties of the heart $\mathcal{H}_t$ and the associated cohomological functor. Since the cohomological functor $\HH^0_t$ is obtained by composing left and right truncations, we recall the construction of these truncations in the following lemma. First we fix some notations that we will use in the rest of the paper.
	
	\begin{notation}\label{2.17}
	\begin{itemize}
	\item
	$t=(\ts,\fs)$ is a torsion pair in $\modd(A)$, and by abusing notation we also denote by $t=(\mathcal{T},\mathcal{F})$ the corresponding torsion pair of finite type in $\Mod(A)$ under the bijection of Theorem \ref{2.5}. $t(-)$ denotes the torsion functor associated to the torsion pair $(\mathcal{T},\mathcal{F})$.
	\item
	$\sigma_t$, $C_t$, $\mathcal{N}_t$ and $(\mathcal{Z}_t,\mathcal{I}_t)$ are respectively the $2$-term cosilting complex, the cosilting module, the maximal rigid set and the cosilting pair associated to $t$ as mentioned in Remark \ref{2.13}.
	\item
	The HRS-tilt at $t$ is denoted by $(\mathcal{X}_t,\mathcal{Y}_t)$ with the heart $\mathcal{H}_t$ and the cohomological functor $\HH^0_t$.
	\end{itemize}
	\end{notation}
	
	\begin{lemma}\label{2.18}
	Let
	\[\begin{tikzcd}
	{} & {X:} & \cdots & {X^{-1}} & {X^0} & {X^1} & \cdots
	\arrow[from=1-3, to=1-4]
	\arrow["{d^{-1}}", from=1-4, to=1-5]
	\arrow["{d^0}", from=1-5, to=1-6]
	\arrow["{d^1}", from=1-6, to=1-7]
\end{tikzcd}\]
	be a cochain complex in $\D(A)$, and consider the pull back diagram
	\[\begin{tikzcd}
	&& 0 & 0 & \\
	0 & {\Imm(d^{-1})} & E & {t(\HH^0(X))} & 0 \\
	0 & {\Imm(d^{-1})} & {\Ker(d^0)} & {\HH^0(X)} & 0 \\
	&& {f(\HH^0(X))} & {f(\HH^0(X))} \\
	&& 0 & 0
	\arrow[from=1-3, to=2-3]
	\arrow[from=1-4, to=2-4]
	\arrow[from=2-1, to=2-2]
	\arrow["\alpha", from=2-2, to=2-3]
	\arrow["\id"', from=2-2, to=3-2]
	\arrow[from=2-3, to=2-4]
	\arrow["\beta"', from=2-3, to=3-3]
	\arrow[from=2-4, to=2-5]
	\arrow[from=2-4, to=3-4]
	\arrow[from=3-1, to=3-2]
	\arrow[from=3-2, to=3-3]
	\arrow[from=3-3, to=3-4]
	\arrow[from=3-3, to=4-3]
	\arrow[from=3-4, to=3-5]
	\arrow[from=3-4, to=4-4]
	\arrow["\id"', from=4-3, to=4-4]
	\arrow[from=4-3, to=5-3]
	\arrow[from=4-4, to=5-4]
\end{tikzcd}\]
	where the right-hand column is the canonical short exact sequence with respect to the torsion pair $(\mathcal{T},\mathcal{F})$. Then the following (vertical) short exact sequence of cochain complexes gives us the canonical triangle with respect to the t-structure $(\mathcal{X}_t,\mathcal{Y}_t)$.
	\[\begin{tikzcd}
	\cdots & {X^{-2}} & {X^{-1}} & E & 0 & 0 & \cdots \\
	\cdots & {X^{-2}} & {X^{-1}} & {X^0} & {X^1} & {X^2} & \cdots \\
	\cdots & 0 & 0 & {X^0/E} & {X^1} & {X^2} & \cdots
	\arrow[from=1-1, to=1-2]
	\arrow[from=1-2, to=1-3]
	\arrow["\id"', from=1-2, to=2-2]
	\arrow[from=1-3, to=1-4]
	\arrow["\id"', from=1-3, to=2-3]
	\arrow[from=1-4, to=1-5]
	\arrow[from=1-4, to=2-4]
	\arrow[from=1-5, to=1-6]
	\arrow[from=1-5, to=2-5]
	\arrow[from=1-6, to=1-7]
	\arrow[from=1-6, to=2-6]
	\arrow[from=2-1, to=2-2]
	\arrow[from=2-2, to=2-3]
	\arrow[from=2-2, to=3-2]
	\arrow[from=2-3, to=2-4]
	\arrow[from=2-3, to=3-3]
	\arrow[from=2-4, to=2-5]
	\arrow[from=2-4, to=3-4]
	\arrow[from=2-5, to=2-6]
	\arrow["\id", from=2-5, to=3-5]
	\arrow[from=2-6, to=2-7]
	\arrow["\id", from=2-6, to=3-6]
	\arrow[from=3-1, to=3-2]
	\arrow[from=3-2, to=3-3]
	\arrow[from=3-3, to=3-4]
	\arrow[from=3-4, to=3-5]
	\arrow[from=3-5, to=3-6]
	\arrow[from=3-6, to=3-7]
\end{tikzcd}\]
\begin{proof}
The morphism $X^{-1}\rightarrow E$ is given by the composition $X^{-1}\twoheadrightarrow \Imm(d^{-1})\overset{\alpha}{\rightarrowtail}E$. Thus, the zeroth cohomology of the first row is $t(\HH^0(X))\in\mathcal{T}$ by the above pull back diagram. Similarly, the zeroth cohomology of the third row is $\Ker(d^0)/E\cong f(\HH^0(X))\in\mathcal{F}$. This completes the proof.
\end{proof}
	\end{lemma}
	
\begin{proposition}\label{2.19}
\begin{itemize}
\item[(1)]$($\cite{HRS}$)$
We have
\[\mathcal{H}_t=\{X\in \D(A)\mid \HH^0(X)\in\mathcal{F}, \HH^1(X)\in\mathcal{T}\text{ and } \HH^i(X)=0\text{ }\forall i\neq 0,1\}.\]
In particular, $\mathcal{H}_t$ admits a torsion pair $(\mathcal{F},\mathcal{T}[-1])$.
\item[(2)]$($\cite[Theorem 1.3]{ZW}$)$
$(\mathcal{X}_t,\mathcal{Y}_t)=({}^{\bot_{\leq 0}}\sigma_t,{}^{\bot_{> 0}}\sigma_t)$.
\item[(3)]$($\cite[Theorem 5.2]{Sa}$)$
$\mathcal{H}_t$ is a locally coherent Grothendieck category, and the subcategory of finitely presented objects is given by
\[\mathcal{H}_t^{fp}=\{X\in \D(A)\mid \HH^0(X)\in\fs, \HH^1(X)\in\ts\text{ and } \HH^i(X)=0\text{ }\forall i\neq 0,1\}.\]
\item[(4)]$($\cite[Lemma 2.8]{AMV2}$)$
The cohomological functor $\HH^0_t$ restricts to an equivalence between $\Prod(\sigma_t)$ and $\Inj \mathcal{H}_t$, the subcategory of injective objects of $\mathcal{H}_t$. In particular, $\HH^0_t$ induces a bijection between $\mathcal{N}_{\sigma}$ and the set of isoclasses of indecomposable injective objects in $\mathcal{H}_t$.
\end{itemize}
\end{proposition}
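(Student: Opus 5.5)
Proposition \ref{2.19} collects known results, so the plan is to verify (1) and (2) directly and reduce (3) and (4) to the cited sources, with only a brief indication of why they hold.

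For (1), I intersect the two halves. By definition $X\in\mathcal{X}_t[-1]$ means $\HH^1(X)\in\mathcal{T}$ and $\HH^i(X)=0$ for $i\geq 2$. Likewise $X\in\mathcal{Y}_t$ means $\HH^0(X)\in\mathcal{F}$ and $\HH^i(X)=0$ for $i\leq -1$. Intersecting gives exactly the stated description of $\mathcal{H}_t$.

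For the torsion pair $(\mathcal{F},\mathcal{T}[-1])$ in $\mathcal{H}_t$, I argue in two steps.
\begin{itemize}
\item Orthogonality: for $F\in\mathcal{F}$ and $T\in\mathcal{T}$ we have $\Hom_{\D(A)}(F,T[-1])=\Ext^{-1}_A(F,T)=0$.
\item Canonical sequences: for $X\in\mathcal{H}_t$, the standard truncation triangle $\HH^0(X)\to X\to \HH^1(X)[-1]\to \HH^0(X)[1]$ has all three terms in $\mathcal{H}_t$. It is therefore a short exact sequence in the heart, with left term in $\mathcal{F}$ and right term in $\mathcal{T}[-1]$.
\end{itemize}

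For (2), I compute orthogonality against $\sigma=\sigma_t:I^0\to I^1$ on stalk complexes. Since $\sigma$ is a complex of injectives, morphisms in $\D(A)$ are computed in the homotopy category. For a module $M$ this gives
\[\Hom_{\D(A)}(M,\sigma)\cong\Hom_A(M,C_t)\quad\text{and}\quad\Hom_{\D(A)}(M,\sigma[1])\cong\Coker\Hom_A(M,\sigma).\]
Moreover $\Hom_{\D(A)}(M[k],\sigma)=0$ for $k\geq 2$ and for $k<0$. Hence $\Hom(M[k],\sigma)$ vanishes for all $k\geq 0$ exactly when $M\in{}^{\bot_0}C_t=\mathcal{T}$. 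Also $\Hom(M,\sigma[1])=0$ exactly when $M\in\mathcal{C}_\sigma=\Cogen(C_t)=\mathcal{F}$, by Definition \ref{2.6}(3).

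Next I pass from stalk complexes to arbitrary objects using truncation triangles for the standard t-structure. An object of $\mathcal{X}_t$ is built by extensions from $\mathcal{T}$ and from modules shifted into degrees $\leq -1$. Because each ${}^{\bot_I}\sigma$ is closed under extensions, this yields $\mathcal{X}_t\subseteq{}^{\bot_{\leq 0}}\sigma$, and the same argument gives $\mathcal{Y}_t\subseteq{}^{\bot_{>0}}\sigma$.

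Then I invoke the known fact that $({}^{\bot_{\leq 0}}\sigma,{}^{\bot_{>0}}\sigma)$ is a t-structure, because $\sigma$ is a pure-injective cosilting complex. This gives $\Hom({}^{\bot_{\leq 0}}\sigma,\mathcal{Y}_t)=0$, so ${}^{\bot_{\leq 0}}\sigma\subseteq{}^{\bot_0}\mathcal{Y}_t=\mathcal{X}_t$. The two aisles coincide, and hence so do the co-aisles. The main obstacle is exactly this input that the cosilting complex cogenerates a t-structure; this is where \cite{ZW} is needed.

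For (3), I follow \cite{Sa}. The torsion pair is of finite type, i.e. $\mathcal{F}$ is closed under direct limits. From this one shows that $\mathcal{H}_t$ has exact direct limits and a generator, so it is Grothendieck. One then checks that objects with $\HH^0\in\fs$ and $\HH^1\in\ts$ are finitely presented and generate $\mathcal{H}_t$ under direct limits, which identifies $\mathcal{H}_t^{fp}$ and gives local coherence. I would only cite this part.

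For (4), I apply the cohomological functor $\HH^0_t$. By (2), $\sigma$ lies in $\mathcal{Y}_t$, and the description $\mathcal{X}_t={}^{\bot_{\leq 0}}\sigma$ gives the adjunction isomorphism
\[\Hom_{\mathcal{H}_t}(X,\HH^0_t(\sigma))\cong\Hom_{\D(A)}(X,\sigma)\quad\text{for } X\in\mathcal{H}_t.\]
Exactness of the right-hand side in $X$ follows from the vanishing of $\Hom(\mathcal{H}_t,\sigma[1])$. Hence $\HH^0_t(\sigma)$ is an injective cogenerator of $\mathcal{H}_t$, and full faithfulness on $\Prod(\sigma)$ follows from the same isomorphism, as in \cite[Lemma 2.8]{AMV2}. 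Indecomposables then correspond via Proposition \ref{2.11}.
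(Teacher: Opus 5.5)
Your proposal is correct in substance, and it takes a different route from the paper, because the paper gives no argument for this proposition. It simply attributes each part to \cite{HRS}, \cite{ZW}, \cite{Sa} and \cite{AMV2}. You follow the paper only for (3). Elsewhere you argue directly. You check (1) by intersecting the two halves of the HRS t-structure. You obtain (2) from Hom computations against stalk complexes plus a comparison of aisles. You derive the injectivity half of (4) from (2) via the truncation adjunction.

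This is more informative than the citations. It shows that (2) uses only ${}^{\bot_0}C_t=\mathcal{T}$ and $\mathcal{C}_{\sigma_t}=\Cogen(C_t)$. So $\sigma_t$ must be a copresentation witnessing the cosilting property, not an arbitrary one. For $C_t=0$, the minimal copresentation $0\to 0$ fails, and one needs $0\to E$ with $E$ an injective cogenerator.

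The external input you flag as the main obstacle can be removed. Assume your two inclusions. Let $Z\in{}^{\bot_{\le 0}}\sigma_t$, with HRS truncation triangle $X_Z\to Z\to Y_Z\to X_Z[1]$.
\begin{itemize}
\item Since $X_Z[1]\in\mathcal{X}_t\subseteq{}^{\bot_{\le 0}}\sigma_t$, you get $Y_Z\in\mathcal{Y}_t\cap{}^{\bot_{\le 0}}\sigma_t\subseteq{}^{\bot_{\mathbb{Z}}}\sigma_t$.
\item ${}^{\bot_{\mathbb{Z}}}\sigma_t=0$ by Definition~\ref{2.6}(1)(b), tested against a stalk injective cogenerator of $\Mod(A)$.
\item Hence $Z\cong X_Z\in\mathcal{X}_t$, and dually ${}^{\bot_{>0}}\sigma_t\subseteq\mathcal{Y}_t$.
\end{itemize}

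Three local repairs are needed.

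(i) Your claim that $\Hom_{\D(A)}(M[k],\sigma_t)=0$ for $k\geq 2$ and $k<0$ is the vanishing range of $\Hom_{\D(A)}(M,\sigma_t[k])$. Since $\Hom(M[k],\sigma_t)=\Hom(M,\sigma_t[-k])$, the correct range is $k\ge 1$ and $k\le -2$. In particular, $\Hom(M[-1],\sigma_t)\cong\Coker\Hom_A(M,\sigma_t)$ is nonzero for every nonzero $M\in\mathcal{T}$. The conclusions you then draw are still right.

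(ii) In the unbounded derived category, objects of $\D^{\le -1}$ or $\D^{\ge 1}$ are not finite extensions of shifted modules, so \emph{built by extensions} must be replaced.
\begin{itemize}
\item For $\mathcal{X}_t$, use that $\sigma_t[i]\in\D^{\ge 0}$ for $i\le 0$, together with $\Hom(\D^{\le -1},\D^{\ge 0})=0$.
\item For $\mathcal{Y}_t$ the argument is not the same, since $\Hom(\D^{\ge 1},\D^{\le 0})\neq 0$ in general. You need that $\sigma_t[i]$, for $i\ge 1$, is a bounded complex of injectives concentrated in degrees $\le 0$. Then $\Hom_{\D(A)}(Z,\sigma_t[i])$ is computed in the homotopy category and vanishes for $Z$ represented by a complex concentrated in degrees $\ge 1$.
\end{itemize}

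(iii) In (4), the cogenerator property needs its own line. Suppose $X\in\mathcal{H}_t$ and $\Hom_{\D(A)}(X,\sigma_t)=0$. Since $X[k]\in\mathcal{X}_t={}^{\bot_{\le 0}}\sigma_t$ for $k\ge 1$, you get $X\in{}^{\bot_{\le 0}}\sigma_t\cap\mathcal{Y}_t=\mathcal{X}_t\cap\mathcal{Y}_t=0$.

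Also, the isomorphism $\Hom_{\mathcal{H}_t}(X,\HH^0_t(\sigma_t))\cong\Hom_{\D(A)}(X,\sigma_t)$ is just the truncation adjunction for $\sigma_t\in\mathcal{Y}_t$. Statement (2) is what you actually need in three places:
\begin{itemize}
\item for $\Hom_{\D(A)}(\mathcal{H}_t,\sigma_t[1])=0$;
\item for the cogenerator property;
\item for comparing $\Hom_{\D(A)}(\HH^0_t(\sigma'),\sigma'')$ with $\Hom_{\D(A)}(\sigma',\sigma'')$ in the full-faithfulness step.
\end{itemize}
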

	
	We saw that the heart $\mathcal{H}_t$ is a locally coherent Grothendieck category and its injective objects are the shadows of $2$-term cosilting complexes in $\Prod(\sigma_t)$ after applying the cohomological functor $\HH^0_t$. Beside injective objects, simple objects in abelian categories are another kind of generators.
	Simple objects in $\mathcal{H}_t$ were studied in \cite{AHL}.
	Since $\mathcal{H}_t$ has a torsion pair $(\mathcal{F},\mathcal{T}[-1])$, any simple object either belongs to $\mathcal{F}$ or belongs to $\mathcal{T}[-1]$. Following \cite{AHL} we call simples in $\mathcal{F}$ {\it torsion simple} and simples in $\mathcal{T}[-1]$ {\it torsion-free simple}.
    
\begin{definition}$($\cite[Definition 3.1]{AHL}$)$\label{2.20}
let $(\mathcal{T},\mathcal{F})$ be a torsion pair in abelian category $\mathcal{A}$. $F\in \mathcal{F}$ is called {\it torsion-free almost torsion} if
\begin{itemize}
	\item[(1)] Every proper quotient of $F$ is contained in $\mathcal{T}$.
	\item[(2)] For every $F'\in \mathcal{F}$ and every short exact sequence $0\rightarrow F\rightarrow F'\rightarrow C\rightarrow 0$, $C\in \mathcal{F}$.
\end{itemize}
The concept of {\it torsion almost torsion-free} is defined dually.
	\end{definition}
	
We mention that torsion-free almost torsion (resp. torsion almost torsion-free) module for torsion pairs in $\modd(A)$ were studied by Barnard, Carroll, and Zhu under the name {\it minimal extending modules} (resp. {\it minimal coextending modules}) \cite{BKZ}.
	
\begin{proposition}\label{2.21}
	\begin{itemize}
	\item[(1)]$($\cite{AHL}$)$
     Torsion simples in the heart $\mathcal{H}_t$ are exactly torsion-free almost torsion modules with respect to $(\mathcal{T},\mathcal{F})$.
	\item[(2)]$($\cite{AHL}$)$
    Torsion-free simples in the heart $\mathcal{H}_t$ coincide with objects of the form $T[-1]$, where $T$ is torsion almost torsion-free modules with respect to $(\mathcal{T},\mathcal{F})$.
     \item[(3)]$($\cite{S}$)$
     Torsion almost torsion-free modules are always finite dimensional.
	\end{itemize}
	\end{proposition}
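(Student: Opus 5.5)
For parts (1) and (2) I would compare short exact sequences in $\mathcal{H}_t$ ending or starting at a given object with the long exact cohomology sequence in $\Mod(A)$. For part (3) I would use that $\mathcal{F}$ is closed under direct limits.

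\emph{Part (1).} Let $F\in\mathcal{F}$, viewed in $\mathcal{H}_t$. A short exact sequence $0\to X\to F\to Y\to 0$ in $\mathcal{H}_t$ is a triangle. By Proposition \ref{2.19}(1) its long exact cohomology sequence reads
\[0\to \HH^0(X)\to F\to \HH^0(Y)\to \HH^1(X)\to 0,\qquad \HH^1(Y)=0 .\]
So $Y\in\mathcal{F}$. Writing $K:=\HH^0(X)\subseteq F$ and $T':=\HH^1(X)\in\mathcal{T}$, we get an extension $0\to F/K\to Y\to T'\to 0$. Conversely, any $K\subseteq F$ together with an extension $0\to F/K\to B\to T'\to 0$ with $B\in\mathcal{F}$ and $T'\in\mathcal{T}$ gives such a sequence in $\mathcal{H}_t$: take $X$ to be the cocone of $F\to B$, whose cohomology is $K$ in degree $0$ and $T'$ in degree $1$. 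Hence $F$ is simple if and only if every such datum is trivial, i.e. either $K=0$ and $T'=0$, or $K=F$ and $B=0$.

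\begin{itemize}
\item[(a)] Assume $F$ is torsion-free almost torsion and take such a datum.
\begin{itemize}
\item[] If $0\neq K\neq F$, then $F/K\in\mathcal{T}$ embeds in $B\in\mathcal{F}$, so $F/K=0$, a contradiction.
\item[] If $K=0$, condition (2) of Definition \ref{2.20} gives $T'\in\mathcal{F}\cap\mathcal{T}=0$.
\item[] If $K=F$, then $B\cong T'\in\mathcal{F}\cap\mathcal{T}$, so $B=0$.
\end{itemize}
\item[(b)] Assume $F$ is simple.
\begin{itemize}
\item[] For a proper quotient $F/K$ with $K\neq 0$, take $B:=f(F/K)$ and $T'=0$. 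Applying the datum to the kernel $K'\supseteq K$ of $F\to f(F/K)$ forces $K'=F$. Hence $f(F/K)=0$, i.e. $F/K\in\mathcal{T}$.
\item[] Given $0\to F\to F'\to C\to 0$ with $F'\in\mathcal{F}$, let $F''\subseteq F'$ be the preimage of $t(C)$. Then $F''\in\mathcal{F}$ is an extension of $t(C)$ by $F$, and the datum with $K=0$ forces $t(C)=0$. So $C\in\mathcal{F}$.
\end{itemize}
\end{itemize}

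\emph{Part (2).} This is the dual argument for $T[-1]$ with $T\in\mathcal{T}$. A short exact sequence $0\to X\to T[-1]\to Y\to 0$ in $\mathcal{H}_t$ yields $X=X'[-1]$ with $X'\in\mathcal{T}$ and an exact sequence
\[0\to \HH^0(Y)\to X'\to T\to \HH^1(Y)\to 0\]
with $\HH^0(Y)\in\mathcal{F}$. The same case analysis then matches simplicity of $T[-1]$ with the two conditions dual to Definition \ref{2.20}, using the pullback along $f(-)$ instead of $t(-)$. The main care in (1) and (2) is realizing every such extension datum as an actual triangle in $\mathcal{H}_t$, via the cone construction of Proposition \ref{2.16}.

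\emph{Part (3).} Let $T$ be torsion almost torsion-free, so every proper submodule of $T$ lies in $\mathcal{F}$. Suppose $T$ is not finitely generated. Then every finitely generated submodule of $T$ is proper, hence lies in $\mathcal{F}$. Since $T$ is the direct limit of its finitely generated submodules and $\mathcal{F}=\underrightarrow{\Lim}\,\fs$ is closed under direct limits (Theorem \ref{2.5}), this gives $T\in\mathcal{F}\cap\mathcal{T}=0$, a contradiction. So $T$ is finitely generated, and therefore finite dimensional since $A$ is finite dimensional.
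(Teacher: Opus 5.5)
The paper gives no proof of this proposition. It only quotes it, attributing (1) and (2) to \cite{AHL} and (3) to \cite{S}. So your proposal has to stand on its own, and it does: it is a correct, self-contained argument.

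\textbf{Part (1).} Your correspondence between subobjects of $F$ in $\mathcal{H}_t$ and data $\bigl(K\subseteq F,\ 0\to F/K\to B\to T'\to 0\bigr)$ with $B\in\mathcal{F}$, $T'\in\mathcal{T}$ is right. The cocone of $F\to F/K\hookrightarrow B$ has cohomology $K$ in degree $0$ and $T'$ in degree $1$. A triangle whose three vertices lie in the heart is a short exact sequence there. The case analysis then works in both directions.

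\textbf{Part (2).} The duals of your two constructions in (b) also go through. For a proper submodule $I$, use $t(I)\hookrightarrow T$. For the second condition, use the sequence $0\to f(K)\to T''/t(K)\to T\to 0$. This is a pushout along $K\twoheadrightarrow f(K)$, not the ``pullback along $f(-)$'' you mention.

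\textbf{A convention point.} Simplicity requires $F\neq 0$ (resp.\ $T\neq 0$), and you use this silently. In (a) you need it to conclude that $F$ is simple. In (b) you need it to rule out the option $K=F$, $B=0$ when $K=0$. So the definition of torsion-free almost torsion must be read with the standard nonzero convention; the literal wording of Definition \ref{2.20} admits $0$.

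\textbf{What your route shows that the citations hide.} Your argument for (1) and (2) uses only the HRS description of $\mathcal{H}_t$, closure of $\mathcal{F}$ under subobjects and of $\mathcal{T}$ under quotients, and the torsion/torsion-free decomposition. So it holds for an arbitrary torsion pair in $\Mod(A)$. Your proof of (3) genuinely uses the finite type hypothesis, namely that $\mathcal{F}=\underrightarrow{\Lim}\,\fs$ is closed under direct limits. That is available here because $(\mathcal{T},\mathcal{F})$ is the finite type torsion pair of Notation \ref{2.17}.
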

	
	From the above proposition it is obvious that every torsion-free almost torsion and every torsion almost torsion-free module is a brick, i.e. the endomorphism ring is a division ring. 
	
	By Proposition \ref{2.19} the cohomological functor $\HH^0_t$ sends $2$-term cosilting complex $\sigma_t$ to an injective cogenerator for the heart $\mathcal{H}_t$. In the following we explicitly describe $\HH^0_t(\sigma_t)$.
	
	\begin{proposition}\label{2.22}
	Let $\sigma_t=I_0\overset{\sigma}{\rightarrow}I_1$ and $\mathcal{N}_t$ be the associated maximal rigid set.
    \begin{itemize}
    \item[(1)]
    $\HH^0_t(\sigma_t)=I_0\rightarrow W$, where $W$ is obtained by the following pull back diagram.
    \[\begin{tikzcd}
	0 & {\Imm(\sigma)} & W & {t(\Coker(\sigma))} & 0 \\
	0 & {\Imm(\sigma)} & {I_1} & {\Coker(\sigma)} & 0
	\arrow[from=1-1, to=1-2]
	\arrow[from=1-2, to=1-3]
	\arrow["\id"', from=1-2, to=2-2]
	\arrow[from=1-3, to=1-4]
	\arrow[from=1-3, to=2-3]
	\arrow[from=1-4, to=1-5]
	\arrow[from=1-4, to=2-4]
	\arrow[from=2-1, to=2-2]
	\arrow[from=2-2, to=2-3]
	\arrow[from=2-3, to=2-4]
	\arrow[from=2-4, to=2-5]
\end{tikzcd}\]
    \item[(2)]
    Let $\mu:E^0\rightarrow E^1\in\mathcal{N}_t$, then $\HH^0_t(\sigma_t)=E^0\rightarrow W_{\mu}$ where $W_{\mu}$ is obtained by a pull back diagram similar to $(1)$.
    \end{itemize}	
    \begin{proof}
    Since $\Ker(\sigma_t)=C\in \mathcal{F}$, we already have $\sigma_t\in \mathcal{Y}_t$. So by Remark \ref{2.15} $(3)$, we only need to take the left truncation of $\sigma_t$ with respect to the t-structure $(\mathcal{X}_t[-1],\mathcal{Y}_t[-1])$. Then the results follow from Lemma \ref{2.18}.
    \end{proof}
	\end{proposition}

	As we mentioned in Proposition \ref{2.8}, cotilting modules are exactly cosilting modules with respect to an epimorphic injective copresentation. In particular, any cotilting module is isomorphic to the associated $2$-term cosilting complex in $\D(A)$. Thus, the following result is an immediate consequence of Proposition \ref{2.22}.
	
	\begin{proposition}\cite{CGF}\label{2.23}
		Let $C$ be a cotilting module and $t=t_C=(^{\bot_0}C,\Cogen(C))$ be
		the torsion pair cogenerated by $C$. Then $C$ (considered as a stalk complex concentrated in degree zero) is an injective cogenerator for $\mathcal{H}_t$. In particular, indecomposable modules in $\Prod(C)$ coincide with indecomposable injective objects in $\mathcal{H}_t$.
	\end{proposition}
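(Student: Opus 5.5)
The plan is to deduce Proposition \ref{2.23} directly from Proposition \ref{2.22} and Proposition \ref{2.19}(4), using Proposition \ref{2.8}(1) to identify the cosilting complex with the stalk complex $C$.

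First, since $C$ is cotilting, Proposition \ref{2.8}(1) lets us choose an injective copresentation $0\rightarrow C\rightarrow I_0\overset{\sigma}{\rightarrow}I_1\rightarrow 0$ with $\sigma$ surjective. The $2$-term complex $\sigma_t=(I_0\overset{\sigma}{\rightarrow}I_1)$ is then the $2$-term cosilting complex associated to $t$ via Proposition \ref{2.7}. Its only nonzero cohomology is $\HH^0(\sigma_t)=C$, because $\HH^1(\sigma_t)=\Coker(\sigma)=0$. Hence $\sigma_t\cong C$ in $\D(A)$, where $C$ is viewed as a stalk complex in degree zero.

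Next, apply Proposition \ref{2.22}(1). Since $\Coker(\sigma)=0$, we have $t(\Coker(\sigma))=0$, so the pullback gives $W=\Imm(\sigma)=I_1$. Therefore $\HH^0_t(\sigma_t)=\sigma_t\cong C$. One can also see this more directly: by Proposition \ref{2.19}(1), $C\in\mathcal{F}\subseteq\mathcal{H}_t$, so $\HH^0_t$ acts as the identity on it.

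Proposition \ref{2.19}(4) says $\HH^0_t$ restricts to an equivalence $\Prod(\sigma_t)\rightarrow\Inj\mathcal{H}_t$. Consequently $C$ is injective in $\mathcal{H}_t$, and every injective object of $\mathcal{H}_t$ is a summand of a product of copies of $C$. To show $C$ is a cogenerator, note that every object of the Grothendieck category $\mathcal{H}_t$ embeds into its injective envelope, which lies in $\Prod(C)$. So every object embeds into a product of copies of $C$.

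For the final claim, take $X\in\Prod(\sigma_t)=\Prod(C)$, computed in $\D(A)$. Such an $X$ has cohomology concentrated in degree zero, since products in $\D(A)$ are computed componentwise on cohomology. Thus $X$ is a module in $\Prod(C)\subseteq\Mod(A)$, and $\HH^0_t(X)=X$. Indecomposability is preserved by the equivalence of Proposition \ref{2.19}(4), so indecomposables in $\Prod(C)$ are exactly the indecomposable injectives of $\mathcal{H}_t$.

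The only subtle point is this identification of $\Prod$ computed in $\D(A)$ with $\Prod$ computed in $\Mod(A)$ for stalk complexes. It holds because products in $\D(A)$ are exact on cohomology.
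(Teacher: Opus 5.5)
Your proposal is correct and follows essentially the same route as the paper: you use Proposition \ref{2.8}(1) to identify the cotilting module $C$ with its epimorphic $2$-term cosilting complex $\sigma_t$ in $\D(A)$, and then deduce the statement from Proposition \ref{2.22} together with the equivalence in Proposition \ref{2.19}(4). Your extra checks, namely that $C$ is a cogenerator and that $\Prod(C)$ computed in $\D(A)$ agrees with $\Prod(C)$ computed in $\Mod(A)$, only make explicit what the paper treats as immediate.
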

	
\section{The main results}
	In this section, first we give a method to obtain all simple objects in Grothendieck categories satisfying some mild condition. Then, we will apply this result to the heart of HRS-tilt at a torsion pair in $\modd(A)$.
	
	Probably, the most important examples of Grothendieck categories are module categories. In this case, indecomposable injective modules cogenerate all injectives and also cogenerate the whole module category.
	\begin{proposition}\label{3.1}
	Let $R$ be an arbitrary ring with identity. Then, the set of isoclasses of indecomposable injective $R$-modules is a cogenerating subcategory of $\Mod(R)$. In particular, every injective $R$-module is a direct summand of a product of indecomposable injective modules.
	\begin{proof}
	Let $M\in\Mod(R)$. For every $x\in M$ denote by $Rx$ the cyclic module generated by $x$. Because $Rx$ is finitely generated it has a maximal submodule and so a simple quotient $S_x$. Let $E(S_x)$ be the injective envelope of this simple. Then, by injectivity, there exists some morphism $h_x$ making the following diagram commutative.
	\[\begin{tikzcd}
	Rx & M \\
	{S_x} \\
	{E(S_X)}
	\arrow[hook, from=1-1, to=1-2]
	\arrow[two heads, from=1-1, to=2-1]
	\arrow["{h_x}", dashed, from=1-2, to=3-1]
	\arrow[hook, from=2-1, to=3-1]
\end{tikzcd}\]
It is clear that the canonical morphism $M\rightarrow \prod_{x\in M}E(S_x)$ induced by all $h_x$'s is a monomorphism. 
	\end{proof}
	\end{proposition}
	
	There is no guarantee that the condition of the above proposition holds in general Grothendieck categories. However, the categories that are important to us satisfy this condition.
	\begin{condition}$(\SII)$
	We say that a Grothendieck category satsisfies the condition $(\SII)$ (stands for \textbf{S}ufficiency of \textbf{I}ndecomposable \textbf{I}njectives), if the collection of indecomposable injective objects forms a cogenerating subcategory. Or equivalently, every injective object is a direct summand of a product of indecomposable injective objects.
	\end{condition}
The radical ideal of an additive category $\mathcal{A}$ is defined as follow. For a pair of objects $(X,Y)$ in $\mathcal{A}$,
\[\Rad(X,Y):=\{f\in \Hom_{\mathcal{A}}(X,Y)|1_X-gf \space\text{ is invertible for any }\space  g\in\Hom_\mathcal{A}(Y,X)\}.\]

We start with the following easy lemma which seems to be known, at least for module categories.
	\begin{lemma}\label{3.3}
	Let $\mathcal{A}$ be an additive category and $X$ and $Y$ be two indecomposable objects with local endomorphism rings. Then, $\Rad(X,Y)$ is the set of non-isomorphisms $f:X\rightarrow Y$. 
	\begin{proof}
	An isomorphism $f:X\rightarrow Y$ doesn't belong to $\Rad(X,Y)$ because $1_X-f^{-1}f=0$ is not invertible.
	Conversely, assume that $f:X\rightarrow Y$ is a non-isomorphism and $g:Y\rightarrow X$ is an arbitrary morphism. Then, if we knew that $gf$ is not isomorphism, $1_X-gf$ would be an isomorphism, because other wise $1=gf+1_X-gf$ is not isomorphism by locality assumption.
	
	It remains to prove that $gf$ cannot be an isomorphism. Assume that $gf$ is an isomorphism and $\alpha$ be its inverse. Then $f\alpha g:Y\rightarrow Y$ is an idempotent. From $\alpha gf=1_X$ we know that $\alpha g$ is an epimorphism. So, if $f\alpha g=0$, $f=0$ which contradicts the assumption that $gf$ is an isomorphism.  Similarly $f\alpha g=1_Y$ implies that $f$ is an isomorphism, a contradiction. Therefore, $f\alpha g$ is a non-trivial idempotent in the endomorphism ring of $Y$, which contradicts locality.
	\end{proof}
	\end{lemma}
    
Note that indecomposable injective objects in Grothendieck categories have local endomorphism ring \cite[Lemma 2.5.7]{Kr}.
	 
Now we can prove the following main result, which enables us to obtain all simple objects from indecomposable injective objects.

\begin{theorem}\label{3.4}
Let $\mathcal{H}$ be a Grothendieck category which satisfies the condition $(\SII)$, and $\textbf{Sp}\mathcal{H}$ be the set of isoclasses of indecomposable injective objects in $\mathcal{H}$. For each $I\in\textbf{Sp}\mathcal{H}$, consider the following canonical morphism 
			\[\phi_I:I\longrightarrow \prod_{J\in\textbf{Sp}\mathcal{H}}J^{\Rad(I,J)}.\]
Set $S_I:=\Ker(\phi_I)$ and $\mathcal{S}:=\{S_I\mid I\in\textbf{Sp}\mathcal{H}\; \text{and}\; S_I\neq 0\}$.
Then, $\mathcal{S}$ is a complete set of non-isomorphic simple objects in $\mathcal{H}$. Moreover, if $S_I\neq 0$, $I$ is the injective envelope of the simple $S_I$.
\begin{proof}
We prove that if $S_I\neq 0$, then it is a simple object. Assume that $M$ is a non-zero subobject of $S_I$. By taking injecive envelope of $I/M$ and using the condition $(\SII)$, we get the commutative diagram
	\[\begin{tikzcd}
	& M && \\
	{S_I} & I && {\prod_{J\in\textbf{Sp}\mathcal{H}}J^{\Rad(I,J)}} \\
	{} & {I/M} \\
	& {E(I/M)} && {\prod_{J\in\textbf{Sp}\mathcal{H}}J^{\alpha_J}}
	\arrow[hook, from=1-2, to=2-1]
	\arrow[hook, from=1-2, to=2-2]
	\arrow[hook, from=2-1, to=2-2]
	\arrow["{\phi_I}", from=2-2, to=2-4]
	\arrow["\pi"', two heads, from=2-2, to=3-2]
	\arrow["i"', hook, from=3-2, to=4-2]
	\arrow["s", hook, from=4-2, to=4-4]
\end{tikzcd}\]
for some family of sets $(\alpha_J)_{J\in\textbf{Sp}\mathcal{H}}$. Each component of the morphism 
\[si\pi:I\longrightarrow \prod_{J\in\textbf{Sp}\mathcal{H}}J^{\alpha_J}\]
to some $J\in \textbf{Sp}\mathcal{H}$ is a non-isomorphism, because $\Ker(si\pi)=\Ker(\pi)=M\neq 0$. So, by the definition of $S_I$, $S_I$ is a subobject of $\Ker(si\pi)=M$ which means that $M=S_I$.

So, we have proved that if $S_I\neq 0$, it is simple. Then because $I$ is indecomplosable, it is obviously the injective envelope of $S_I$. Now we want to prove that $\mathcal{S}$ is a complete set of non-isomorphic simple objects. Assume that $S$ is a simple object and $I$ be its injective envelope, which is clearly indecomposable. We prove that $S=S_I$. Since $S\subseteq I$ is essential, the kernel of any non-isomorphism $I\rightarrow J$ cantains $S$, so $S\subseteq S_I$. On the other hand, by taking injecive envelope of $I/S$ and using the condition $(\SII)$, similar to the above paragraph, we get the following diagram of morphisms.
\[\begin{tikzcd}
	I & {I/S} & {E(I/S)} & {\prod_{J\in\textbf{Sp}\mathcal{H}}J^{\alpha_J}}
	\arrow["\pi", from=1-1, to=1-2]
	\arrow["i", hook, from=1-2, to=1-3]
	\arrow["s", hook, from=1-3, to=1-4]
\end{tikzcd}\]
Similar to the above argument, each component of this morphism is a non-isomorphism. Thus $S_I\subseteq\Ker(si\pi)=S$. This completes the proof.
	\end{proof}
	\end{theorem}
	
After this general result, we now get back to our set up. For a given torsion pair $t=(\ts,\fs)$ in $\modd(A)$ we are interested in HRS-tilted heart $\mathcal{H}_t$, see Notation \ref{2.17}.
\begin{lemma}\label{3.5}
The Grothendieck category $\mathcal{H}_t$ satisfies the condition $(\SII)$.
\begin{proof}
First note that because $\Prod(\sigma_t)$ (resp. $\Inj\mathcal{H}_t$) is a product-closed subcategory of $\D(A)$ (resp. $\mathcal{H}_t$), the restriction of the cohomological functor $\HH^0_t:\D(A)\rightarrow \mathcal{H}_t$ to
\[\HH^0_t|_{\Prod(\sigma_t)}:\Prod(\sigma_t)\rightarrow \Inj\mathcal{H}_t\]
which is an equivalence by Proposition \ref{2.19},
both preserves and reflects products. Also, $\HH^0_t$ induces a bijection between the maximal rigid set $\mathcal{N}_t$ and $\textbf{Sp}\mathcal{H}_t$.
	 
Now, because every object in $\Prod(\sigma_t)$ is a direct summand of a product of elements of $\mathcal{N}_t$ (which are indecomposable) by Proposition \ref{2.11}, every object in $\Inj\mathcal{H}_t$ is a direct summand of a product of elements of $\textbf{Sp}\mathcal{H}_t$ (which are indecomposable).
	\end{proof}
	\end{lemma}
By the above lemma we can apply Theorem \ref{3.4} to $\mathcal{H}_t$.
For simplicity, first we assume that $\sigma_t$ is a $2$-term cotilting complex, i.e. $\sigma_t$ is an epimorphism. In this case, $\sigma_t$ is isomorphic to the cotilting module $C_t=\HH^0(\sigma_t)$ in $\D(A)$. Thus, we can work with $C_t$.
By Proposition \ref{2.23} $\Prod(C_t)$, considered as a subcategory of $\D(A)$ concentrated in degree zero, is the subcategory of injective objects in $\mathcal{H}_t$. In particular, $\textbf{Sp}\mathcal{H}_t$ coincides with the set of indecomposable objects in $\Prod(C_t)$.
	
Now we can prove the following main result.

\begin{theorem}\label{3.6}
Let $C\in \Mod(A)$ be a cotilting module and $\mathcal{Z}_C$ be the set of isoclasses of all indecomposable modules in $\Prod(C)$. For each $X\in\mathcal{Z}_C$ consider the canonical morphism.
\begin{equation}
\phi_X:X\longrightarrow \prod_{Y\in\mathcal{Z}_C}Y^{\Rad(X,Y)}
\end{equation}
Set $\mathtt{F}_X:=\Ker(\phi_X)$ and $\mathtt{T}_X:=t(\Coker(\phi_X))$, where $t(-)$ is the torsion functor associated to the torsion pair $t=(\mathcal{T},\mathcal{F}):=({}^{\bot_0}C,\Cogen(C))$.
Then, exactly one of the following happens.
	\begin{itemize}
	\item[(1)]
    $\mathtt{F}_X=0$ and $\mathtt{T}_X=0$.
	\item[(2)]
    $\mathtt{F}_X\neq 0$ and $\mathtt{T}_X=0$.
	\item[(3)]
	$\mathtt{F}_X=0$ and $\mathtt{T}_X\neq 0$.
	\end{itemize}
Furthermore, $\mathbf{F}_C:=\{\mathtt{F}_X\mid X\in\mathcal{Z}_C\; \text{and}\; \mathtt{F}_X\neq 0\}$ is the set of all torsion-free almost torsion modules with respect to $(\mathcal{T},\mathcal{F})$ and $\mathbf{T}_C:=\{T_X\mid X\in\mathcal{Z}_C\; \text{and}\; S_X\neq 0\}$ is the set of all torsion almost torsion-free modules with respect to $(\mathcal{T},\mathcal{F})$. 
	 \begin{proof}
If we consider $\phi_X$ as a morphism in $\mathcal{H}_t$, it fits in the following triangle in $\D(A)$, where we have denoted $\prod_{Y\in\mathcal{Z}_C}Y^{\Rad(X,Y)}$ by $\textbf{Y}$.
	\[\begin{tikzcd}
	\vdots & \vdots & \vdots & \vdots \\
	0 & 0 & 0 & X \\
	X & X & {\textbf{Y}} & {\textbf{Y}} \\
	{\textbf{Y}} & 0 & 0 & 0 \\
	\vdots & \vdots & \vdots & \vdots
	\arrow[from=1-1, to=2-1]
	\arrow[from=1-2, to=2-2]
	\arrow[from=1-3, to=2-3]
	\arrow[from=1-4, to=2-4]
	\arrow[from=2-1, to=2-2]
	\arrow[from=2-1, to=3-1]
	\arrow[from=2-2, to=2-3]
	\arrow[from=2-2, to=3-2]
	\arrow[from=2-3, to=2-4]
	\arrow[from=2-3, to=3-3]
	\arrow["{\phi_X}", from=2-4, to=3-4]
	\arrow["\id", from=3-1, to=3-2]
	\arrow["{\phi_X}"', from=3-1, to=4-1]
	\arrow["{\phi_X}", from=3-2, to=3-3]
	\arrow[from=3-2, to=4-2]
	\arrow["\id", from=3-3, to=3-4]
	\arrow[from=3-3, to=4-3]
	\arrow[from=3-4, to=4-4]
	\arrow[from=4-1, to=4-2]
	\arrow[from=4-1, to=5-1]
	\arrow[from=4-2, to=4-3]
	\arrow[from=4-2, to=5-2]
	\arrow[from=4-3, to=4-4]
	\arrow[from=4-3, to=5-3]
	\arrow[from=4-4, to=5-4]
\end{tikzcd}\]
Similar to the proof of Proposition \ref{2.22},  the kernel of $\phi_X$ in $\mathcal{H}_t$ is the $2$-term complex $K_X:=X\rightarrow W$, which is obtained by the following pull back diagram.
\[\begin{tikzcd}
	0 & {\Imm(\phi_X)} & W & {t(\Coker(\phi_X))} & 0 \\
	0 & {\Imm(\phi_X)} & {\textbf{Y}} & {\Coker(\phi_X)} & 0
	\arrow[from=1-1, to=1-2]
	\arrow[from=1-2, to=1-3]
	\arrow["\id"', from=1-2, to=2-2]
	\arrow[from=1-3, to=1-4]
	\arrow[from=1-3, to=2-3]
	\arrow[from=1-4, to=1-5]
	\arrow[from=1-4, to=2-4]
	\arrow[from=2-1, to=2-2]
	\arrow[from=2-2, to=2-3]
	\arrow[from=2-3, to=2-4]
	\arrow[from=2-4, to=2-5]
\end{tikzcd}\]
So, $\HH^0(K_X)=\Ker(\phi_X)$ and $\HH^1(K_X)=t(\Coker(\phi_X))$.
By Theorem \ref{3.4} either $K_X=0$ or it is a simple object in $\mathcal{H}_t$. Furthermore, because $\mathcal{H}_t$ admits a torsion pair $(\Cogen(C),{}^{\bot_0}C[-1])$ by Proposition \ref{2.12}, if $K_X\neq 0$ it most be either in $\Cogen(C)$ or in ${}^{\bot_0}C[-1]$. Thus, exactly one of the following happens.
\begin{itemize}
    \item[(1)]
    $K_X=0$, which means that $K_X$ is an acyclic complex, i.e. $\mathtt{F}_X=0$ and $\mathtt{T}_X=0$.
    \item[(2)]
    $K_X\in \mathcal{F}$, which means that $\mathtt{F}_X\neq 0$ and $\mathtt{T}_X=0$.
	\item[(3)]
    $K_X\in \mathtt{T}[-1]$, which means that
	$\mathtt{F}_X=0$ and $\mathtt{T}_X\neq 0$.
	\end{itemize}
The other statements follow from Theorem \ref{3.4} and Proposition \ref{2.21}.  
	 \end{proof}
\end{theorem} 

\begin{proposition}\label{3.7}
Keeping the notations of Theorem \ref{3.6},
\begin{itemize}
\item[(1)] If $\mathtt{F}_X\neq 0$, then $X$ is the injective envelope of the torsion simple $\mathtt{F}_X$.
\item[(2)] If $\mathtt{T}_X\neq 0$, it is finite dimensional and $X$ is the injective envelope of the torsion-free simple $\mathtt{T}_X[-1]$.
\end{itemize}
\begin{proof}
If $\mathtt{T}_X\neq 0$, then it is a torsion almost torsion-free with respect to torsion pair $({}^{\bot_0}C,\Cogen(C))$, and so by Proposition \ref{2.21} it is finite dimensional. Other statements are direct consequences of Theorems \ref{3.4} and \ref{3.6}.
\end{proof}
\end{proposition}

The authors of \cite{AHL} studied simple objects in the heart associated to a cotilting module, and injective envelopes of these simples.
They proved that, for a given indecomposable module in the product closure of a cotilting module, being the injective envelope of simple objects in the heart is equivalent to existence of certain short exact sequences, see Theorem \ref{3.9}.
In the sequel, after recalling this result and some necessary terminologies, we provide a different way to detect injective envelopes of simples in the heart using Theorem \ref{3.6}.
We also explicitly construct the short exact sequences that the authors of \cite{AHL} considered necessary for being injective envelope of simples in the heart.

Recall that a {\it left almost split morphism} $f:M\rightarrow N$ in an additive category is a morphism that is not split monomorphism, and any other morphism $f':M\rightarrow N'$ which is not split monomorphism factor through $f$. When such a factorization is unique, $f$ is called {\it strong left almost split}. The notions of {\it right almost split morphism} and {\it strong right almost split morphism} is defined dually.
\begin{definition}$($\cite{AHL}$)$
Let $t=(\ts,\fs)$ be a torsion pair in $\modd(A)$.
\begin{itemize}
\item[(1)]
$X\in \mathcal{Z}_t$ is called {\it neg-isolated} if there exists a left almost split morphism $X\rightarrow X^{+}$ in $\mathcal{F}$.
\item[(2)]
$X\in \mathcal{Z}_t$ is called {\it critical neg-isolated}, or just {\it critical}, if there exists a left almost split morphism $X\rightarrow X^{+}$ in $\mathcal{F}$ which is an epimorphism. 
\item[(3)]
$X\in \mathcal{Z}_t$ is called {\it special neg-isolated}, or just {\it special}, if there exists a left almost split morphism $X\rightarrow X^{+}$ in $\mathcal{F}$ which is an monomorphism. 
\end{itemize}
\end{definition}

Assume that $X\in \mathcal{Z}_t$ is neg-isolated and $f:X\rightarrow X^{+}$ be a left almost split morphism in $\mathcal{F}$. If $f$ is a monomorphism, $X$ is special by definition. Otherwise, $X\rightarrow\Imm(f)$ is a left almost split morphism in $\Cogen(C)$ which is an epimorphism. Thus, neg-isolated modules are either critical or special.
We want to use Theorem \ref{3.6} to identify critical and special modules in $\mathcal{Z}_t$. First, we recall the following important result, which clarifies the relation between critical modules and torsion-free almost torsion modules, and also the relation between special modules and torsion almost torsion-free modules.

\begin{theorem}$($\cite{AHL}$)$\label{3.9}
Let $t=(\mathcal{T},\mathcal{F}):=({}^{\bot_0}C,\Cogen(C))$ be the torsion pair $\Mod(A)$ cogenerated by the cotilting module $C$, and $\mathcal{Z}_C$ be the set of indecomposable modules in $\Prod(C)$.
\begin{itemize}
	\item[(1)] Consider the short exact sequence
	\begin{equation}\label{eq3.2}
	0\rightarrow F\overset{g}{\longrightarrow} M\overset{a}{\longrightarrow} \bar{M}\rightarrow 0
	\end{equation}
    in $\Mod(A)$. We have that $F$ is torsion-free almost torsion with respect to $t$ and $g$ is a $\Prod(C)$-envelope if and only if $M$ is critical and $a$ is a left almost split map in $\mathcal{F}$. In particular, $M\in\mathcal{Z}_C$ is critical if and only if it is injective envelope of a torsion simple in the heart $\mathcal{H}_t$. 
			\item[(2)] Consider the short exact sequence
	\begin{equation}\label{eq3.3}
	0\rightarrow N\overset{b}{\longrightarrow} \bar{N}\overset{f}{\longrightarrow} T\rightarrow 0
	\end{equation}
	in $\Mod(A)$. We have that $T$ is torsion almost torsion-free with respect to $t$ and $f$ is an $\mathcal{F}$-cover if and only if $N$ is special and $b$ is a left almost split map in $\mathcal{F}$. In particular, $M\in\mathcal{Z}_C$ is special if and only if it is injective envelope of a torsion-free simple in the heart $\mathcal{H}_t$. 
		\end{itemize}
	\end{theorem}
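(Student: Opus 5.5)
The natural route is through the heart $\mathcal{H}_t$, using Theorem \ref{3.6} and Proposition \ref{3.7}. The key identification is that, for $X\in\mathcal{Z}_C$, the simple socle of $X$ in $\mathcal{H}_t$ (when it exists) is the kernel in $\mathcal{H}_t$ of $\phi_X$. Combined with Lemma \ref{3.3}, a morphism from $X$ into an object of $\Prod(C)$ is radical exactly when it is not a split monomorphism. Throughout I use that $\Prod(C)=\Inj\mathcal{H}_t$ (Proposition \ref{2.23}). Consequently a minimal left $\Prod(C)$-approximation of an object of $\mathcal{H}_t$ is the same thing as its injective envelope in $\mathcal{H}_t$.

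\emph{(1), ``only if''.} Let $F$ be torsion-free almost torsion. By Proposition \ref{2.21} it is a torsion simple of $\mathcal{H}_t$. Since $g$ is a $\Prod(C)$-envelope, the remark above shows that $g$ is the injective envelope of $F$ in $\mathcal{H}_t$. Hence $M$ is indecomposable, lies in $\mathcal{Z}_C$, and by Theorem \ref{3.4} we have $F=S_M=\mathtt{F}_M$. Condition (2) of Definition \ref{2.20} gives $\bar M\in\mathcal{F}$, and $a$ is not a split monomorphism because $F\neq 0$.

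Now let $h:M\to F'$ with $F'\in\mathcal{F}$ be a morphism that is not a split monomorphism. Embed $F'$ into a product of objects of $\mathcal{Z}_C$. No component $M\to Y$ of the composite can be an isomorphism, since otherwise $h$ would split. By Lemma \ref{3.3} every component is therefore radical, so $\ker h\supseteq \ker\phi_M=F$ and $h$ factors through $a$. Thus $a$ is left almost split in $\mathcal{F}$ and is epi, so $M$ is critical.

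\emph{(1), ``if''.} Let $a$ be a left almost split epimorphism in $\mathcal{F}$ with kernel $F$. Each component of $\phi_M$ is a non-isomorphism, hence not a split monomorphism because $M$ is indecomposable. So each component factors through $a$, which gives $F\subseteq\ker\phi_M$.

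For the reverse inclusion, embed $\bar M\in\mathcal{F}\subseteq\Cogen(C)$ into a product of objects of $\mathcal{Z}_C$. The components of this embedding precomposed with $a$ are not split monomorphisms, since $a$ is not one. Hence they are radical by Lemma \ref{3.3}, and so $\ker\phi_M\subseteq\ker a=F$. Therefore $F=\mathtt{F}_M$, which by Theorem \ref{3.6} is torsion-free almost torsion. By Proposition \ref{3.7}, $g$ is its injective envelope in $\mathcal{H}_t$, i.e.\ a $\Prod(C)$-envelope. The ``in particular'' clause then follows from these two directions together with Proposition \ref{2.21}.

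\emph{(2).} This is the dual argument, run in $\mathcal{H}_t$ with the torsion-free simple $T[-1]$. The triangle $T[-1]\to N\overset{b}{\to}\bar N\to T$ exhibits $N$ as an injective object containing $T[-1]$, and I would show this triangle is the kernel sequence of $b$ viewed in $\mathcal{H}_t$. The identification $T=\mathtt{T}_N$ should come from comparing $b$ with $\phi_N$ as in (1), since both have the same factorization property. One then reads off the kernel $N\to W$ of $\phi_N$ in $\mathcal{H}_t$ via the pullback in the proof of Theorem \ref{3.6}, whose $\HH^1$ is $t(\Coker\phi_N)$.

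I expect the main obstacle to be exactly this step. It requires showing that $\bar N$ matches the relevant pullback, so that $\Coker b\cong t(\Coker\phi_N)$ and $\Coker b$ is torsion. It also requires showing that $f$ is an $\mathcal{F}$-cover; for this I would use the almost torsion-free property together with the fact that $b$ is left almost split, so that maps from $\mathcal{F}$ to $T$ lift through $\bar N$.
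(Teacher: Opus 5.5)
Your argument for (1) is correct. The inclusions $F\subseteq\Ker\phi_M\subseteq\Ker a$ from Lemma~\ref{3.3}, together with Theorem~\ref{3.6} and Proposition~\ref{3.7}, give both directions; note that $F\neq 0$ because $a$ is not a split monomorphism. The paper only cites this result, so there is no internal proof to compare with, and relying on Theorems~\ref{3.4} and \ref{3.6} is not circular. Part (2), however, is not proved, and the step you flag as the obstacle cannot be carried out as you plan it.

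\emph{The ``if'' direction of (2).} Comparing $b$ with $\phi_N$ gets you only part of the way. Since $\Ker b=0$, the kernel of $b$ in $\mathcal{H}_t$ is $t(\Coker b)[-1]$. It is non-zero, because a monomorphism of $\mathcal{H}_t$ out of the injective $N$ would split. It lies in $S_N$, because every component of $\phi_N$ factors through $b$. Hence it equals the simple $S_N$, and $t(\Coker b)\cong\mathtt{T}_N$. But $\Coker b\in\mathcal{T}$ does not follow from the factorization property. If $b$ is a left almost split monomorphism in $\mathcal{F}$, so is $\binom{b}{0}\colon N\to\bar N\oplus N$, and its cokernel $T\oplus N$ is not torsion. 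So with the paper's definition of left almost split, which has no minimality, the ``if'' half of (2) is false as literally stated; you must use left minimality of $b$. With minimality the argument closes. The image $I$ of $b$ in $\mathcal{H}_t$ is a quotient of $N$, hence lies in $\mathcal{F}$. The epimorphism $p\colon N\to I$ is not split, so $p=ub$ for some $u$. Writing $b=jp$ gives $uj=1_I$, hence $jub=b$. Minimality then makes $j$ an isomorphism, i.e.\ $\Coker b/t(\Coker b)=0$. Two further points on the cover. The lifting property of $f$ comes from $\Ext^1_A(F,N)=0$ for $F\in\mathcal{F}={}^{\bot_1}C$, not from the almost torsion-free property. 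Right minimality needs its own argument, such as the one in the proof of Theorem~\ref{3.10}(3).

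\emph{The ``only if'' direction of (2).} You assume $N$ is injective in $\mathcal{H}_t$, but the hypotheses do not give $N\in\Prod(C)$ directly; this is the missing idea. There are two ways to supply it. One is Wakamatsu's lemma: $\mathcal{F}$ is closed under extensions, so the kernel of the $\mathcal{F}$-cover $f$ lies in $\mathcal{F}\cap\mathcal{F}^{\bot_1}=\Prod(C)$. You then still need $N$ indecomposable: a summand of $N$ avoiding the socle $T[-1]$ would split off from $\bar N$ with $f$ vanishing on it, contradicting right minimality of $f$. The other way is to start from the injective envelope $E$ of the simple $T[-1]$, show that $E/T[-1]\to T$ is an $\mathcal{F}$-cover, and invoke uniqueness of covers. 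Either way, $b$ is left almost split for the following reason. A non-split $h\colon N\to F'$ is not a monomorphism in $\mathcal{H}_t$, so its kernel there contains the essential simple socle $T[-1]$, and $h$ factors through the $\mathcal{H}_t$-cokernel $b$. Finally, you do not address the ``in particular'' clause of (2). It follows from these same kernel computations and needs no minimality.
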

	
	Now we can prove the following theorem, which determines whether an indecomposable summand $X\in\mathcal{Z}_C$ is critical, special, or neither.
	Here kernels and cokernels are computed in $\Mod(A)$. $(2)$ was proved in \cite{EN}.
	\begin{theorem}\label{3.10}
	Let $C\in \Mod(A)$ be a cotilting module. For any $X\in\mathcal{Z}_C$ consider the canonical morphism
	\begin{equation*}
		\phi_X:X\longrightarrow \prod_{Y\in\mathcal{Z}_C}Y^{\Rad(X,Y)},
	\end{equation*}
	constructed in Theorem \ref{3.6}.
	\begin{itemize}
	\item[(1)]
	$X$ is not neg-isolated if and only if $\Ker(\phi_X)=0$ and $\Coker(\phi_X)\in\mathcal{F}$.
	\item[(2)]
	$X$ is critical if and only if $\Ker(\phi_X)\neq 0$. In this case, the induced short exact sequence
	\[0\rightarrow\Ker(\phi_X)\overset{g}{\longrightarrow} X\overset{a}{\longrightarrow} \Imm(\phi_X)\rightarrow 0\]
	satisfies the requirements of Theorem \ref{3.9} for the short exact sequence \eqref{eq3.2}.
	\item[(3)]
	$X$ is special if and only if $\Coker(\phi_X)\notin\mathcal{F}$. In this case, $\phi_X$ is a monomorphism and the upper row of the following pull back diagram satisfies the requirements of Theorem \ref{3.9} for the short exact sequence \eqref{eq3.3}, where $\textbf{Y}:=\prod_{Y\in\mathcal{Z}_C}Y^{\Rad(X,Y)}$.
	\[\begin{tikzcd}
	0 & X & {\bar{X}} & {t(\Coker(\phi_X))} & 0 \\
	0 & X & {\textbf{Y}} & {\Coker(\phi_X)} & 0
	\arrow[from=1-1, to=1-2]
	\arrow["b", from=1-2, to=1-3]
	\arrow["\id"', from=1-2, to=2-2]
	\arrow["f", from=1-3, to=1-4]
	\arrow[from=1-3, to=2-3]
	\arrow[from=1-4, to=1-5]
	\arrow[from=1-4, to=2-4]
	\arrow[from=2-1, to=2-2]
	\arrow["{\phi_X}", from=2-2, to=2-3]
	\arrow[from=2-3, to=2-4]
	\arrow[from=2-4, to=2-5]
\end{tikzcd}\]
	\end{itemize}
	\begin{proof}
	As in Theorem \ref{3.6} we set $\mathtt{F}_X:=\Ker(\phi_X)$ whenever it is non-zero, and $\mathtt{T}_X:=t(\Coker(\phi_X)$ whenever it is non-zero.
	
	$\Ker(\phi_X)=0$ and $\Coker(\phi_X)\in\mathcal{F}$ is equivalent to the condition that $\phi_X$ is a monomorphism when we consider it as a morphism in the heart $\mathcal{H}_t$. And the latter is equivalent to the condition that the indecomposable injective object $X\in \mathcal{H}_t$ is not injective envelope of a simple object by Theorem \ref{3.4}. This proves $(1)$.
	
	For $(2)$, assume that $\mathtt{F}_X\neq 0$ and set $\bar{X}:=\Imm(\phi_X)$. By Theorem \ref{3.9}, it is enough to show that in the short exact sequence
	\[0\rightarrow\mathtt{F}_X\overset{g}{\longrightarrow} X\overset{a}{\longrightarrow} \bar{X}\rightarrow 0\]
	$g$ is a $\Prod(C)$-envelope. For any $Y\in\Prod(C)$, by applying $\Hom_A(-,Y)$ to this short exact sequence we get the following exact sequence.
	\[0\rightarrow \Hom_A(\bar{X},Y)\rightarrow\Hom_A(X,Y)\rightarrow\Hom_A(\mathtt{F}_X,Y)\rightarrow\Ext^1_A(\bar{X},Y)=0\]
	This proves that $g$ is a $\Prod(C)$-preenvelope. Now let $\alpha:X\rightarrow X$ be a morphism such that $\alpha g=g$. If $g$ is a non-isomorphism, it belongs to $\Rad(X,X)$ by Lemma \ref{3.3}. Thus, by the definition of $\phi_X$ we have $\alpha g=0$. This implies that $g=\alpha g=0$, which is a contradiction.
	
	For proving $(3)$, by Theorem \ref{3.9} we need to show that in the short exact sequence
	\[0\rightarrow X\overset{b}{\longrightarrow}\bar{X}\overset{f}{\longrightarrow} \mathtt{T}_X\rightarrow 0\]
	$f$ is an $\mathcal{F}$-cover. Being a subobject of $\textbf{Y}=\prod_{Y\in\mathcal{Z}_C}Y^{\Rad(X,Y)}$, we have that $\bar{X}\in\mathcal{F}$.
	For an arbitrary object $F\in\mathcal{F}$, by applying $\Hom_A(F,-)$ to this short exact sequence, we obtain the following exact sequence.
	\[0\rightarrow \Hom_A(F,X)\rightarrow\Hom_A(F,\bar{X})\rightarrow\Hom_A(F,\mathtt{T}_X)\rightarrow\Ext^1_A(F,X)=0\]
	So, $f$ is an $\mathcal{F}$-precover. We want to prove that $f$ is an $\mathcal{F}$-cover. Since $\mathcal{F}$ is a covering class, we chose an $\mathcal{F}$-cover $h:F\rightarrow\mathtt{T}_X$. Therefore, using the property of precover, we obtain the commutative diagram
	\[\begin{tikzcd}
	0 & X & {\bar{X}} & {\mathtt{T}_X} & 0 \\
	&& F
	\arrow[from=1-1, to=1-2]
	\arrow["b", from=1-2, to=1-3]
	\arrow["f", from=1-3, to=1-4]
	\arrow["r", curve={height=-6pt}, from=1-3, to=2-3]
	\arrow[from=1-4, to=1-5]
	\arrow["s", curve={height=-6pt}, from=2-3, to=1-3]
	\arrow["h"', from=2-3, to=1-4]
\end{tikzcd}\]
	such that $fs=h$ and $hr=f$. So, $h(rs)=h$, and by minimality of $f$ we have that $rs$ is an isomorphism. Thus, $\bar{X}\cong \Imm(s)\oplus \Ker(r)$.
	On the other hand, $f(\Ker(r))=hr(\Ker(r))=0$. Therefore
	\[X\cong \Ker(f)\cong \Ker(f|_{\Imm(s)})\oplus\Ker(r)\cong \Ker(h)\oplus\Ker(r).\]
	Since $X$ is indecomposable, either $\Ker(h)=0$ or $\Ker(r)=0$.
	The former is impossible, otherwise, being an epimorphism, $h$ would be an isomorphism, which contradict the fact that $F\in\mathcal{F}$ and $\mathtt{T}_X\in\mathcal{T}$.
	Therefore, $\Ker(r)=0$ which means that $r$ is an isomorphism. Thus, $f$ is isomorphic to $h$, so is an $\mathcal{F}$-cover.
	\end{proof}
	\end{theorem}
	
    In what follows we explain how the above results about cotilting torsion pairs, can be adapted to the general case of cosilting torsion pairs. We follow \cite[Section 4]{ALS2}.
    
    Let $C\in\Mod(A)$ be a cosilting module and $(\mathcal{T},\mathcal{F})=({}^{\bot_0}C,\Cogen(C))$ be the torsion pair of finite type cogenerated by $C$. We fix $\bar{A}:=A/\Ann(C)$, where $\Ann(C):=\{a\in A\mid aC=0\}$ is the annihilator of $C$. Then
    \[\Mod(\bar{A})=\{M\in\Mod(A)\mid \Ann(C)M=0\}.\]
    Since $\mathcal{F}\subseteq\Mod(\bar{A})$, one can easily see that $(\bar{\mathcal{T}},\bar{\mathcal{F}}):=(\mathcal{T}\cap\Mod(\bar{A}),\mathcal{F})$ is a torsion pair of finite type in $\Mod(\bar{A})$.
By Proposition \ref{2.8} $C$ is a cotilting module in $\Mod(\bar{A})$ and it cogenerate the torsion pair of finite type $(\bar{\mathcal{T}},\bar{\mathcal{F}})$ in $\Mod(\bar{A})$.
	
	\begin{proposition}$($\cite[Section 4]{ALS2}$)$\label{3.11}
	Let $t=(\ts,\fs)$ be a torsion pair in $\modd(A)$ and $(\mathcal{T},\mathcal{F})$ be associated torsion pair of finite type in $\Mod(A)$.
	\begin{itemize}
	\item[(1)]
	The set of torsion-free almost torsion modules with respect to $(\mathcal{T},\mathcal{F})$ is equal to the set of torsion-free almost torsion modules with respect to $(\bar{\mathcal{T}},\bar{\mathcal{F}})$.
	\item[(2)]
	The set of torsion almost torsion-free modules with respect to $(\mathcal{T},\mathcal{F})$ that admit an epimorphic $\mathcal{F}$-cover is equal to the set of torsion almost torsion-free modules with respect to $(\bar{\mathcal{T}},\bar{\mathcal{F}})$.
	\item[(3)]
	If $T$ is a torsion almost torsion-free module with respect to $(\mathcal{T},\mathcal{F})$ whose $\mathcal{F}$-cover is not epimorphism, then $T$ has a unique maximal submodule $L$, and there is a short exact sequence 
	\[0\rightarrow L\overset{f}{\longrightarrow}T\longrightarrow S\rightarrow 0,\]
	where $f$ is an $\mathcal{F}$-cover in $\Mod(A)$ and the simple $A$-module $S$ belongs to $\mathcal{F}^{\bot_0}$. Also, the injective envelope $I$ of $S$ belongs to $\mathcal{I}_t$ and $0\rightarrow I\in \mathcal{N}_t$ is sent, by the cohomological functor $\HH^0_t$, to the injective envelope of $T[-1]$ in the heart, and this gives a bijection between torsion almost torsion-free modules with respect to $(\mathcal{T},\mathcal{F})$ without epimorphic $\mathcal{F}$-cover and indecomposable injectives in $\mathcal{I}_t$.
	\end{itemize}
    \begin{proof}
    $(1)$ is easy because $\mathcal{F}=\Cogen(C)\subseteq\Mod(\bar{A})$.
    $(2)$ is proved in \cite[Proposition 4.5]{ALS2}. And $(3)$ follows from \cite[Proposition 4.6 and Theorem 4.10]{ALS2}.
    \end{proof}
	\end{proposition}
	By Remark \ref{2.13} the lattice of torsion pairs $\textbf{tors}(A)$ is controlled by maximal rigid sets and also by cosilting pairs. Minimal inclusion of torsion classes correspond to irreducible mutation of maximal rigid sets, or irreducible mutation of cosilting pairs. Let us first recall the definition of mutation. For details and more information the reader is referred to \cite{ALS2}.
	
	\begin{definition}\cite[Definition 3.1]{ALS2}
	Let $\mathcal{L}$ and $\mathcal{R}$ be maximal rigid sets in $\D(A)$. $\mathcal{L}$ and $\mathcal{R}$ are said to be {\it related by mutation} if there is a bijection $\Delta:\mathcal{L}\rightarrow\mathcal{R}$ such that $\Delta(\mu)=\mu$ for every $\mu\in\mathcal{L}\cap\mathcal{R}$ and, for every $\lambda\in\mathcal{L}\backslash\mathcal{R}$, there exists a triangle 
	\[\begin{tikzcd}
	{\Delta(\lambda)} & {\epsilon_{\lambda}} & \lambda & {\Delta(\lambda)[-1]}
	\arrow["{\Psi_{\lambda}}", from=1-1, to=1-2]
	\arrow["{\Phi_{\lambda}}", from=1-2, to=1-3]
	\arrow[from=1-3, to=1-4]
\end{tikzcd}\]
where $\Phi_{\lambda}$ is a $\Prod(\mathcal{L}\cap\mathcal{R})$-cover of $\lambda$ and $\Psi_{\lambda}$ is a $\Prod(\mathcal{L}\cap\mathcal{R})$-envelope of $\Delta(\lambda)$.
Then, $\mathcal{L}$ is called a {\it left mutation of} $\mathcal{R}$ at $\mathcal{R}\backslash\mathcal{L}$ and $\mathcal{R}$ is called a {\it right mutation of} $\mathcal{L}$ at $\mathcal{L}\backslash\mathcal{R}$. The mutation is called {\it irreducible} if $\vert\mathcal{L}\backslash\mathcal{R} \vert=1$.

Let $\mathcal{M}$ be a maximal rigid set. An element $\mu\in\mathcal{M}$ is called {\it left mutable in} $\mathcal{M}$ if there exists a maximal rigid set $\mathcal{N}=(\mathcal{M}\backslash{\mu})\cup\{\nu\}$ such that $\mathcal{N}$ is a left mutation of $\mathcal{M}$. Being {\it right mutable in} $\mathcal{M}$ is defined similarly.

Let $(\mathcal{Z},\mathcal{I})$ be a cosilting pair and $\mathcal{N}$ be the maximal rigid set correspond to $(\mathcal{Z},\mathcal{I})$ under the bijections of Remark \ref{2.13}. $X\in\mathcal{M}$ (resp. $I\in\mathcal{I}$) is called {\it left mutable in} $(\mathcal{Z},\mathcal{I})$ if $\mu_X$ (resp. $I[-1]$) is left mutable in $\mathcal{M}$. Being {\it right mutable in} $(\mathcal{Z},\mathcal{I})$ is defined similarly.
\end{definition}
	
\begin{proposition}$($\cite[Corollary 3.10]{ALS2}$)$\label{3.13}
Let $t=(\ts,\fs)$ be a torsion pair in $\modd(A)$.
\begin{itemize}
\item[(1)]
An element $\mu$ in the maximal rigid set $\mathcal{N}_t$ (and the corresponding element in the cosilting pair $(\mathcal{Z}_t,\mathcal{I}_t)$) is left mutable if and only if $\HH^0_t(\mu)$ is injective envelope of a torsion-free simple in $\mathcal{H}_t$ (which is always finite dimensional).
\item[(2)]
An element $\mu$ in the maximal rigid set $\mathcal{N}_t$ (and the corresponding element in the cosilting pair $(\mathcal{Z}_t,\mathcal{I}_t)$) is right mutable if and only if $\HH^0_t(\mu)$ is injective envelope of a finite dimensional torsion simple in $\mathcal{H}_t$.
\end{itemize}
\end{proposition}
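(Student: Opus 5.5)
The plan is to deduce the statement from three ingredients: the correspondence between irreducible mutations and minimal inclusions of torsion pairs, the brick labelling of minimal inclusions, and the description of simples in $\mathcal{H}_t$ from Proposition \ref{2.21}. I will argue both parts simultaneously and keep track of the direction of the inclusion.

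\emph{Step 1.} By Remark \ref{2.13} and \cite{ALS2}, irreducible mutations of $\mathcal{N}_t$ correspond bijectively to arrows of the Hasse quiver of $\textbf{tors}(A)$ at $t$. Left mutations correspond to minimal inclusions $\fs\subsetneq\fs'$ of torsion-free classes, and right mutations to minimal inclusions $\fs''\subsetneq\fs$. Here ``left'' and ``right'' are fixed by the triangle $\Delta(\lambda)\to\epsilon_\lambda\to\lambda$ in the definition of mutation. I would first check this orientation by comparing the co-aisles ${}^{\bot_{>0}}\sigma$ of Proposition \ref{2.19}(2).

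\emph{Step 2.} By \cite{BKZ,DIRRT}, arrows out of $t$ in the Hasse quiver are labelled by bricks.
\begin{itemize}
\item A minimal inclusion $\fs\subsetneq \fs'$ is labelled by a torsion almost torsion-free module $T$ for $t$, and $\fs'$ is the smallest torsion-free class containing $\fs$ and $T$.
\item A minimal inclusion $\fs''\subsetneq\fs$ is labelled by a torsion-free almost torsion module $F$ for $t$.
\end{itemize}
In both cases the label lies in $\modd(A)$. Conversely, a torsion-free almost torsion module for the finite type pair $(\mathcal{T},\mathcal{F})$ yields an arrow in $\textbf{tors}(A)$ exactly when it is finite dimensional. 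Torsion almost torsion-free modules are always finite dimensional by Proposition \ref{2.21}(3). Translating through Proposition \ref{2.21}(1),(2), left mutations of $\mathcal{N}_t$ are in bijection with torsion-free simples $T[-1]$ of $\mathcal{H}_t$. Right mutations are in bijection with torsion simples $F$ that are finite dimensional, i.e.\ finitely presented in $\mathcal{H}_t$ by Proposition \ref{2.19}(3).

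\emph{Step 3.} It remains to identify \emph{which} element $\mu\in\mathcal{N}_t$ is removed in the mutation labelled by a simple $S$. The claim is that $\HH^0_t(\mu)\cong E(S)$, the injective envelope of $S$ in $\mathcal{H}_t$. The idea is as follows.
\begin{itemize}
\item The neighbouring t-structure is obtained from $(\mathcal{X}_t,\mathcal{Y}_t)$ by HRS-tilting $\mathcal{H}_t$ at the torsion pair determined by $S$. This torsion pair has torsion-free class ${}^{\bot_0}S$-type or torsion class generated by $S$, depending on the direction.
\item Under the equivalence $\HH^0_t:\Prod(\sigma_t)\simeq\Inj\mathcal{H}_t$ of Proposition \ref{2.19}(4), the common part $\mathcal{N}_t\cap\mathcal{N}_{t'}$ should correspond to the indecomposable injectives $J$ with $\Hom_{\mathcal{H}_t}(S,J)=0$.
\item By Theorem \ref{3.4} and Lemma \ref{3.5}, these are precisely the $J\in\textbf{Sp}\mathcal{H}_t$ with $J\not\cong E(S)$, since $S_J=S$ iff $J\cong E(S)$.
\end{itemize}
Since an irreducible mutation removes exactly one element, that element must be the $\mu$ with $\HH^0_t(\mu)\cong E(S)$. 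For the converse direction, if $\HH^0_t(\mu)=E(S)$ for a simple $S$ of the relevant type (finite dimensional in the torsion case), Step 2 produces an irreducible mutation labelled by $S$. Step 3 then shows that this mutation is at $\mu$.

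The main obstacle is Step 3, and specifically the identification of $\Prod(\mathcal{N}_t\cap\mathcal{N}_{t'})$ with the injectives of $\mathcal{H}_t$ that are orthogonal to $S$. I would try to establish it directly from the approximation triangle $\Delta(\lambda)\to\epsilon_\lambda\to\lambda\to$. Applying $\HH^0_t$ and using that $\Phi_\lambda$ is a $\Prod(\mathcal{N}_t\cap\mathcal{N}_{t'})$-cover, the kernel of $\HH^0_t(\Phi_\lambda)$ or $\HH^0_t(\Psi_\lambda)$ should be forced to be the simple $S$. This mirrors the computation of $\Ker(\phi_X)$ in Theorem \ref{3.6}. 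If that direct route is too delicate, I would instead invoke \cite[Corollary 3.10]{ALS2} for this identification.
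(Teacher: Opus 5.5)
The paper does not prove this statement. Proposition~\ref{3.13} is quoted from \cite[Corollary 3.10]{ALS2} and used as a black box, so your text is an attempt to reconstruct the cited result rather than a variant of an argument in the paper.

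\textbf{Steps 1 and 2.} These are a sound reduction. Your orientation (left mutation $\leftrightarrow$ a minimal inclusion $\fs\subsetneq\fs'$ labelled by a torsion almost torsion-free $T$, i.e.\ by the torsion-free simple $T[-1]$) agrees with the statement. The translation through Proposition~\ref{2.21} tacitly uses that a finite dimensional module is torsion-free almost torsion (resp.\ torsion almost torsion-free) for $(\ts,\fs)$ if and only if it is so for $(\mathcal{T},\mathcal{F})$. This needs $\mathcal{T}=\varinjlim\ts$ and $\mathcal{F}=\varinjlim\fs$; it is standard, but it should be said.

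\textbf{The gap: Step 3.} This is the only step with real content, and it is not proved. You assert $\mathcal{N}_t\cap\mathcal{N}_{t'}=\{\mu\in\mathcal{N}_t\mid\Hom_{\mathcal{H}_t}(S,\HH^0_t(\mu))=0\}$, but the route through the mutation triangle is only a hope (``should be forced''). The fallback of invoking \cite[Corollary 3.10]{ALS2} ``for this identification'' is circular: that corollary is the statement you are proving. Once you cite it, Steps 1--2 become superfluous and you are back at the paper's bare citation.

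\textbf{How to close it.} The identification is true and can be proved directly, without the triangle. Write $t'$, $t''$ for the neighbours with $\fs\subsetneq\fs'$ and $\fs''\subsetneq\fs$, and $\mathcal{F}'$, $\mathcal{F}''$ for their direct limit closures.
\begin{enumerate}
\item[(a)] $\Prod(\sigma_{t'})$ consists of the Ext-injective objects of the coaisle $\mathcal{Y}_{t'}={}^{\bot_{>0}}\sigma_{t'}$. Since $\Hom_{\D(A)}(Y,\mu[1])=0$ whenever $\HH^i(Y)=0$ for $i\leq 0$, a $2$-term complex $\mu$ lies in $\Prod(\sigma_{t'})$ iff $\Ker\mu\in\mathcal{F}'$ and $\mathcal{F}'\subseteq\mathcal{C}_\mu=\{M\mid\Hom_{\D(A)}(M,\mu[1])=0\}$. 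The same holds for $t''$. For $\mu\in\mathcal{N}_t$ you already have $\Ker\mu\in\mathcal{F}\subseteq\mathcal{C}_\mu$.
\item[(b)] Left case. Since $\mathcal{F}\subseteq\mathcal{F}'$, $\mu\in\mathcal{N}_{t'}$ iff $\mathcal{F}'\subseteq\mathcal{C}_\mu$. Now $\mathcal{C}_\mu\cap\modd(A)$ is closed under submodules and extensions, $\mathcal{C}_\mu$ is closed under direct limits (because $\mu$ is pure-injective), and $\fs'$ is the torsion-free class generated by $\fs$ and $T$. Hence this holds iff $T\in\mathcal{C}_\mu$, i.e.\ iff $0=\Hom_{\D(A)}(T,\mu[1])\cong\Hom_{\mathcal{H}_t}(T[-1],\HH^0_t(\mu))$.
\item[(c)] Right case. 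Here $\mathcal{F}''\subseteq\mathcal{F}\subseteq\mathcal{C}_\mu$, so $\mu\in\mathcal{N}_{t''}$ iff $\Ker\mu\in\mathcal{F}''$. Minimality forces $\fs''=\fs\cap F^{\bot_0}$: this is a torsion-free class containing $\fs''$ and missing the brick $F\in\fs$. Hence $\mathcal{F}''=\mathcal{F}\cap F^{\bot_0}$, and the condition becomes $0=\Hom_A(F,\Ker\mu)\cong\Hom_{\mathcal{H}_t}(F,\HH^0_t(\mu))$.
\item[(d)] A nonzero map from a simple object to an indecomposable injective is an essential monomorphism. So in both cases the unique removed element is the $\mu$ with $\HH^0_t(\mu)\cong E(S)$. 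Theorem~\ref{3.4} is not needed for this.
\end{enumerate}

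\textbf{Minor slips.} In the left case, ${}^{\bot_0}S$ is the torsion class, not the torsion-free class, of the torsion pair in $\mathcal{H}_t$ at which one tilts. Also, you need all arrows at $t$ in the Hasse quiver, not only those out of $t$.
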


\begin{proposition}$($\cite[Corollary 4.11]{ALS2}$)$\label{3.14}
   The injective envelops of simple objects $S$ in $\mathcal{H}_t$ are exactly the following.
   \begin{itemize}
       \item[(1)]
       $\HH^0_t(\mu_X)$ for a critical module $X\in\mathcal{Z}_t$. These are precisely the injective envelopes of $S\cong F$ with $F$ torsion-free almost torsion with respect to $(\mathcal{T},\mathcal{F})$.
       \item[(2)]
       $\HH^0_t(\mu_X)$ for a special module $X\in\mathcal{Z}_t$. These are precisely the injective envelopes of $S\cong T[-1]$ with $T$ torsion almost torsion-free with respect to $(\mathcal{T},\mathcal{F})$ with an epimorphic $\mathcal{F}$-cover.
       \item[(3)]
       $\HH^0_t(0\rightarrow I)$ for $I\in\mathcal{I}_t$. These are precisely the injective envelopes of $S\cong T[-1]$ with $T$ torsion almost torsion-free with respect to $(\mathcal{T},\mathcal{F})$ without an epimorphic $\mathcal{F}$-cover.
   \end{itemize}
\end{proposition}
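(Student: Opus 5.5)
The plan is to combine the general correspondence of Theorem \ref{3.4} with the explicit description of indecomposable injectives in $\mathcal{H}_t$, and then sort simples by the torsion pair $(\mathcal{F},\mathcal{T}[-1])$ of the heart. By Lemma \ref{3.5} and Theorem \ref{3.4}, every simple $S$ of $\mathcal{H}_t$ has an indecomposable injective envelope $I=S_I$-container, and by Proposition \ref{2.19}(4) such $I$ is $\HH^0_t(\mu)$ for a unique $\mu\in\mathcal{N}_t$. The elements of $\mathcal{N}_t$ are of two kinds: the minimal injective copresentations $\mu_X$ of $X\in\mathcal{Z}_t$, and the stalks $0\rightarrow I$ (i.e. $I[-1]$) with $I\in\mathcal{I}_t$. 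By Proposition \ref{2.21}, $S$ is either $F$ (torsion-free almost torsion) or $T[-1]$ (torsion almost torsion-free). So it suffices to decide, in each case, which kind of $\mu$ occurs and which property $X$ has.

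First I compute $\HH^0_t(I[-1])$ for $I\in\mathcal{I}_t$ via Lemma \ref{2.18}/Proposition \ref{2.22}: the pull-back gives $W=t(I)$, so $\HH^0_t(I[-1])\cong t(I)[-1]\in\mathcal{T}[-1]$. Hence $\Hom_{\mathcal{H}_t}(F,\HH^0_t(I[-1]))=\Hom_{\D(A)}(F,t(I)[-1])=0$ for $F\in\mathcal{F}$, so a torsion simple never has envelope of type $0\rightarrow I$. For torsion-free simples $T[-1]$, Proposition \ref{3.11}(3) directly gives that those $T$ without epimorphic $\mathcal{F}$-cover correspond bijectively to $I\in\mathcal{I}_t$ with envelope $\HH^0_t(0\rightarrow I)$; this settles item (3), and since the correspondence $S\mapsto$ envelope is injective (Theorem \ref{3.4}), the remaining torsion-free simples (those with epimorphic $\mathcal{F}$-cover) must have envelopes of the form $\HH^0_t(\mu_X)$.

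Next I reduce to the cotilting case over $\bar{A}=A/\Ann(C)$: by Proposition \ref{2.8}, $C$ is cotilting over $\bar{A}$ with $\Prod(C)$ and $\mathcal{Z}_t$ unchanged and cogenerating $(\bar{\mathcal{T}},\bar{\mathcal{F}})$ with $\bar{\mathcal{F}}=\mathcal{F}$. By Proposition \ref{3.11}(1),(2), the torsion-free almost torsion modules, and the torsion almost torsion-free modules with epimorphic $\mathcal{F}$-cover, are exactly the corresponding modules for $(\bar{\mathcal{T}},\bar{\mathcal{F}})$. Theorem \ref{3.9} over $\bar{A}$ then attaches to each such $F$ a critical $X\in\mathcal{Z}_t$ with a $\Prod(C)$-envelope $F\hookrightarrow X$, and to each such $T$ a special $X$ with sequence $0\rightarrow X\rightarrow\bar{X}\rightarrow T\rightarrow 0$, $\bar{X}\rightarrow T$ an $\mathcal{F}$-cover; conversely every critical/special $X$ arises this way. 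Note criticality and specialness are defined inside $\mathcal{F}$, so they are the same notion over $A$ and $\bar{A}$.

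The main obstacle is the comparison of hearts: showing that the envelope in $\mathcal{H}_t$ (over $A$) of $F$, resp. $T[-1]$, is precisely $\HH^0_t(\mu_X)$ for this $X$, whereas Theorem \ref{3.9} only produces the envelope in the $\bar{A}$-heart, where it is the stalk $X$. My first attempt: use the natural morphism $X\rightarrow \HH^0_t(\mu_X)$ (from $\HH^0$ of the truncation, which has $\HH^0(\HH^0_t(\mu_X))=X$ by Proposition \ref{2.22}) and compose with $F\hookrightarrow X$, resp. with the connecting map $T[-1]\rightarrow X$ from the triangle of the sequence above; check nonzero-ness by taking $\HH^0$ (resp. $\HH^1$, using $\HH^1(\HH^0_t(\mu_X))=t(\Coker\mu_X)$ and non-epimorphicity of $\bar{X}\rightarrow T$ being excluded). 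Since the source is simple, a nonzero map to an indecomposable injective is a mono into its envelope, and indecomposability gives equality. Together with injectivity of $S\mapsto E(S)$ and exhaustion of $\mathcal{N}_t$, this yields the three-part description.
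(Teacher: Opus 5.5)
Your overall route is sound. The paper gives no argument for this statement; it simply quotes \cite[Corollary 4.11]{ALS2}. You instead derive it from Theorem \ref{3.4}, Propositions \ref{2.19}, \ref{2.21}, \ref{3.11}, and Theorem \ref{3.9} applied over $\bar{A}$, which is a legitimate independent route. The following parts are fine as written:
\begin{itemize}
\item the computation $\HH^0_t(I[-1])\cong t(I)[-1]$, which excludes torsion simples from envelopes of type $0\rightarrow I$;
\item the use of Proposition \ref{3.11}(3) for item (3), together with injectivity of $S\mapsto E(S)$;
\item the torsion case (1), where nonvanishing of $F\rightarrow X\rightarrow\HH^0_t(\mu_X)$ is detected by $\HH^0$.
\end{itemize}

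However, the verification in the torsion-free case fails as written. The composite $T[-1]\rightarrow X\rightarrow \HH^0_t(\mu_X)$ factors through the stalk complex $X$ concentrated in degree $0$, so it necessarily induces the zero map on $\HH^1$. It also induces zero on $\HH^0$, since $\HH^0(T[-1])=0$. Cohomology therefore cannot detect this morphism at all. In fact $T[-1]$ lies inside the $\mathcal{F}$-part $X$ of $\HH^0_t(\mu_X)$, so its image in the quotient $t(\Coker\mu_X)[-1]$ is zero. Your remark about non-epimorphicity of $\bar{X}\rightarrow T$ plays no role here.

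The missing observation is that both maps are monomorphisms in $\mathcal{H}_t$, which you see from triangles with all terms in the heart rather than from cohomology.
\begin{itemize}
\item Rotating the triangle of $0\rightarrow X\rightarrow\bar{X}\rightarrow T\rightarrow 0$ gives $T[-1]\rightarrow X\rightarrow\bar{X}\rightarrow T$ with $T[-1],X,\bar{X}\in\mathcal{H}_t$. Hence $0\rightarrow T[-1]\rightarrow X\rightarrow\bar{X}\rightarrow 0$ is a short exact sequence in the heart.
\item The truncation triangle $X\rightarrow\HH^0_t(\mu_X)\rightarrow t(\Coker\mu_X)[-1]\rightarrow X[1]$ (see Proposition \ref{2.22}) also has all terms in $\mathcal{H}_t$. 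So $X\rightarrow\HH^0_t(\mu_X)$ is the inclusion of the torsion part for the torsion pair $(\mathcal{F},\mathcal{T}[-1])$ of the heart, and is a monomorphism.
\end{itemize}
The composite $T[-1]\hookrightarrow\HH^0_t(\mu_X)$ is then a monomorphism from a simple object into an indecomposable injective, so $\HH^0_t(\mu_X)$ is the injective envelope of $T[-1]$. Here simplicity of $T[-1]$ comes from Proposition \ref{2.21}, after Proposition \ref{3.11}(2) tells you that $T$ is torsion almost torsion-free over $A$. The same monomorphism $X\hookrightarrow\HH^0_t(\mu_X)$ also handles the torsion case directly, since $0\rightarrow F\rightarrow X\rightarrow\bar{X}\rightarrow 0$ is exact in the heart. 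With this replacement your argument goes through.
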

	
Now we can prove the following last result, which gives some necessary and sufficient conditions for an element of a cosilting pair (and a maximal rigid set) to be left mutable or right mutable. Here we use the Notation \ref{2.17}.

\begin{theorem}\label{3.15}
Let $(\mathcal{Z}_t,\mathcal{I}_t)$ be a cosilting pair associated to a torsion pair $t\in\textbf{tors}(A)$. For $X\in\mathcal{Z}_t$ consider the canonical morphism
    \begin{equation*}
		\phi_X:X\longrightarrow \prod_{Y\in\mathcal{Z}_t}Y^{\Rad(X,Y)}.
		\end{equation*}
   	\begin{itemize}
\item[(1)]
Every $I\in\mathcal{I}_t$ is left mutable in $(\mathcal{Z}_t,\mathcal{I}_t)$.
\item[(2)]
$X\in\mathcal{Z}_t$ is left mutable if and only if $\mathtt{T}_X:=t(\Coker(\phi_X))\neq 0$, or equivalently                   $\Coker(\phi_X)\notin \mathcal{F}$.
\item[(3)]
$X\in\mathcal{Z}_t$ is right mutable if and only if $\mathtt{F}_X:=\Ker(\phi_X)$ is a finite dimensional module.
\end{itemize}
\begin{proof}
The proof follow from Theorem \ref{3.10} and Propositions \ref{3.13} and \ref{3.14}. Note that in $(2)$ if $\mathtt{T}_X\neq 0$, it is automatically finite dimensional by Proposition \ref{2.21}.
\end{proof}
\end{theorem}
	
	\section*{acknowledgements}
Ramin Ebrahimi is supported by Zhejiang Normal University.
Rasool Hafezi is supported by the National Natural Science of China (Grant No. 12571042).
Jiaqun Wei is supported by the National Natural Science Foundation of China (Grant Nos. 12571042, 12271249) and the Natural Science Foundation of Zhejiang Province (Grant No. LZ25A010002).

\end{document}